\theoremstyle{plain}
\newtheorem{thm}{Theorem}[section]
\newtheorem{prop}{Proposition}[section]
\newtheorem{lem}{Lemma}[section]
\title{Contact forms with large systolic ratio in arbitrary dimensions}
\author{Murat Sa\u glam}
\begin{document}

\begin{abstract}
If a contact form on a $(2n+1)$-dimensional closed contact manifold  admits closed Reeb orbits, then its systolic ration is defined to be the quotient of $(n+1)$th power of the shortest period of Reeb orbits by the contact volume. We prove that every co-orientable contact structure on any closed contact manifold admits a contact form with arbitrarily large systolic ratio. This statement generalizes the recent result of Abbondandolo et al. in dimension three to higher dimensions. We extend the plug construction of Abbondandolo et. al. to any dimension, by means of generalizing the hamiltonian disc maps  studied by the authors to the symplectic ball of any dimension. The plug is a mapping torus and it is equipped with a special contact form so that one can use it to modify a given contact form if the Reeb flow leads to a circle bundle on a "large" portion of the given contact manifold. Inserting the plug sucks up the contact volume while the minimal period remains the same. Following the ideas of Abbondandolo et al. and using Giroux's theory of Liouville open books, we show that any co-orientable contact structure is defined by a contact form, which is suitable to be modified via inserting plugs.   
\end{abstract}

\maketitle

\section{Introduction}
One of the classical problems in Riemannian geometry is to give an upper bound on the length of the shortest non-constant closed geodesic in terms of the Riemannian area on a given closed surface. More specifically, one studies the scaling invariant functional
\begin{equation}\label{sysratio}
 \rho(S,g)=\frac{l_{{\rm min}}(S,g)^2}{{\rm area}(S,g)},   
\end{equation}
on the space of all Riemannian metrics on a given closed surface $S$. Here, $l_{{\rm min}}(S,g)$ denotes the length of the shortest non-constant $g$-
geodesic and ${\rm area}(S,g)$ denotes the area of $S$ with respect to the metric $g$. 

In 1949, Loewner showed that if in (\ref{sysratio}), $l_{{\rm min}}(S, g)$ is replaced by ${\rm sys}_1(S,g)$, namely the length of a shortest non-contractible geodesic, the corresponding ratio $\rho_{{\rm nc}}(\mathbb{T}^2,\cdot)$ admits an optimal bound. In 1952, Pu proved the existence of an optimal bound on $\rho_{{\rm nc}}(\mathbb{RP}^2,\cdot)$. In fact, in both statements the metrics that maximize $\rho_{{\rm nc}}$ do not admit any contractible geodesic and hence they also maximize $\rho$. In early 80's, Gromov proved that 
$$\rho_{{\rm nc}}(S,\cdot)\leq 2$$
for any non-simply connected closed surface $S$ but this bound is in general non-optimal  \cite{Gromov}. On the other hand, in late 80's Croke gave the first upper bound on $\rho(S^2\cdot)$ \cite{croke}, which was later improved by several authors.  

Actually in \cite{Gromov}, Gromov studied the so called \textit{ systolic ratio} in any dimension and showed that for any essential n-dimensional closed manifold $M$,
$$\rho_{{\rm nc}}(M,g)=\frac{{\rm sys}_1(M,g)^n}{{\rm vol}(M,g)}$$
admits an upper bound, which depends only on the dimension. Another natural direction for the generalization of the problem is weakening the Riemannian assumption on the metric. In fact,
the ratios $\rho$ and $\rho_{{\rm nc}}$ generalize to the Finsler setting by replacing the Riemannian area with the Holmes-Thompson area and the bounds on $\rho$  generalize to the Finsler case \cite{APBT}. For the detailed account of results about the systolic ratio in Riemannian and Finsler geometry, we refer to \cite{3sphere} and \cite{general}.  

The systolic ratio $\rho$ naturally generalizes to contact geometry. A \emph{contact manifold} is a $(2n+1)$-dimensional manifold $V$ equipped with a maximally non-integrable hyperplane distribution $\xi$. That is, $\xi$ is locally given by the kernel of a $1$-form $\alpha$ so that $\alpha\wedge(d\alpha)^n$ is non-vanishing. In this case, one says $\xi$ is a \emph{contact structure} on $V$. If $\xi$ is co-orientable, then there exists a global $1$-form $\alpha$, referred as a \emph{contact form on $(V,\xi)$}, such that $\ker \alpha=\xi$. We note that if $\alpha'$ is another contact form on $(V,\xi)$ then $\alpha=f\alpha'$ for some non-vanishing function $f$ on $V$. \emph{In this paper, we will be interested in only the contact structures that are co-orientable.} A contact form $\alpha$ gives rise to a natural dynamical system. Namely, one defines the \emph{Reeb vector field} $R_\alpha$ on $V$ via 
$$\imath_{R_\alpha}d\alpha=0\;\,{\rm and}\;\,\imath_{R_\alpha}\alpha=1.$$ 
Then the \emph{contact systolic ratio} on a closed contact manifold $(V,\xi)$ is defined to be the scaling invariant functional 
$$\rho(V,\alpha):=\frac{T_{{\rm min}}(V,\alpha)^{n+1}}{{\rm vol}(V,\alpha)}$$
on the space of all contact forms on $(V,\xi)$. Here, $T_{{\rm min}}(V,\alpha)$ denotes the minimum among the periods of all orbits of the Reeb vector field $R_\alpha$ and 
$${\rm vol}(V,\alpha):=\int_V \alpha \wedge (d\alpha)^n$$
is the contact volume of $V$ associated to the contact form $\alpha$. 

We note that the contact systolic ratio is not merely a generalization of the notion to a dynamical system but it is strongly related to the previous setting. In fact, given a smooth Finsler manifold $(M,F)$, the canonical Liouville 1-form $pdq$ on the cotangent bundle $T^*S$, restricts to a contact form $\alpha_F$  on the unit cotangent bundle $S^*_FM$. In this case, the Reeb flow is nothing but the geodesic flow restricted to $S^*_F M$ and up to a universal constant, the contact volume  ${\rm vol}(S^*_F M,\alpha_F)$ is the Holmes-Thompson volume of $(M,F)$. Hence the contact systolic ratio of $(S^*_F M,\alpha_F)$ recovers the classical systolic ratio of $(M,F)$.

But it turns out that it is not possible to bound the contact systolic ratio globally. In the case of the tight 3-sphere $(S^3,\xi_{{\rm st}})$, it was shown in \cite{3sphere} that the systolic ratio can be made arbitrarily large. Yet it was also shown that the Zoll contact forms, namely the contact forms for which all Reeb orbits are closed and share the same minimal period, are  maximizers of the functional $\rho(S^3,\cdot)$ if the functional is restricted to a $C^3$-neighbourhood of all Zoll contact forms. For any contact 3-manifold $(M,\xi)$, the non-existence of a global bound on $\rho(M,\cdot)$ is later proved by the same authors in \cite{general} and in \cite{Benedetti}, the local bound on $\rho(S^3,\cdot)$ was generalized to all contact 3-manifolds that admit Zoll contact forms.  

The aim of this paper is to prove that the contact systolic ratio is unbounded in any dimension. Here we need to point that $\rho(V,\alpha)$ makes sense only if the Reeb vector field $R_\alpha$ admits a closed orbit. If $\dim V=3$, by a result of Taubes \cite{Taubes}, we know that any contact form on $V$ admits a closed Reeb orbit but in higher dimensions, this might not be the case. Since we aim for the non-existence of a bound on $\rho$, it is legitimate for us to ignore this issue. 

We first prove that on any odd dimensional sphere, which is equipped with \emph{the standard contact structure}, see Section 4, the systolic ratio is unbounded, see Theorem \ref{bigspheres}.
The strategy of our proof is precisely the same as the proof of the corresponding 3-dimensinal result in \cite{3sphere}. The standard contact structure on the sphere $S^{2n+1}$ admits a contact form $\alpha_0$, so called the \emph{standard contact form}, for which the Reeb flow leads to the Hopf fibration. In this case, the complement of a codimension two submanifold of $S^{2n+1}$ may be identified with $S^1\times \mathbb{B}$, where  $\mathbb{B}$ is the open $2n$-dimensional unit ball, in such a way that the Reeb flow leads to the trivial $S^1$-fibration over $\mathbb{B}$, see the proof of Theorem \ref{bigspheres}. We note that  $S^1\times \mathbb{B}$ may be viewed as the mapping torus of the identity map on the ball. With this motivation, we  recover $S^1\times \mathbb{B}$ as the mapping torus associated to a compactly supported symplectomorphism on  $\mathbb{B}$ so that it admits a contact form $\alpha$ that fits to $\alpha_0$ near the boundary and the associated Reeb dynamics is described by the dynamics of the symplectomorphism. Namely, the closed Reeb orbits translate into the periodic points and periods of closed Reeb orbits are described by the so called \emph{action} of the symplectomorphism. Moreover, the contact volume is, up to a certain shift, given by the \emph{Calabi invariant} of the symplectomorphism, see Section 2. Having this "dictionary" at hand, we construct a suitable isotopy of compactly supported symplectomorphisms of the ball, where the action and the Calabi invariant are arranged for our purposes. As a result, we get \textit{the plug}, that is  a contact manifold $(S^1\times \mathbb{B},\beta)$ such that $\beta$ coincides with $\alpha_0$ near the boundary of $S^1\times \mathbb{B}$ and the periods of orbits of $R_\beta$ are bounded away from zero but the contact volume is arbitrarily small, see Lemma 3.1. As an immediate corollary of the existence of the plug, we show that $\rho(S^{2n+1},\cdot)$ is unbounded on the set of contact forms that define the standard contact structure. In order to construct the isotopy required for the plug, we extend the construction of radial hamiltonians on the unit disc \cite{3sphere} to the unit ball of arbitrary dimension, which requires nothing but some minor adjustments. 

In the last section, we prove our main result. Namely, we show that the systolic ratio is unbounded on a closed contact manifold of any dimension, see Theorem \ref{bigmanifolds}. Again we use the strategy in the proof of the corresponding 3-dimensional result in \cite{general}. On a given contact manifold, we construct a contact form, for which the Reeb flow leads to an $S^1$-fibration on a "large portion" of the manifold and away from this portion the periods of closed Reeb orbits are bounded away from zero. Then we fill this large portion with suitably resized plugs so that the most of the contact volume is eaten up but the minimal period is still bounded away from zero. The construction of the contact form, which is  of Boothby-Wang type on a large portion of the manifold, is the main result of this paper, see Proposition \ref{niceform}. Our construction mimics the corresponding 3-dimensional statement in \cite{general} and relies on the results of Giroux on higher dimensional open books \cite{openbook,ILD}. But it also requires an inductive argument, which uses the corresponding 3-dimensional statement, the Proposition 1 in \cite{general}, as its basis step and contains many technical aspects which do not appear in dimension three.      
\newline  
\textbf{Acknowledgements.} This work is part of a project in the SFB/TRR 191 `Symplectic Structures in Geometry,
Algebra and Dynamics', funded by the DFG.

\section{Contact mapping tori of the symplectomorphisms of the unit ball}
In this section, we present the construction of contact mapping tori associated to isotopies of compactly supported symplectomorphisms of the unit ball, where the Reeb dynamics and the contact volume are described in terms of the dynamics and the classical invariants of the underlying symplectomorphisms. We first recall these classical invariants in a context that is sufficient for our purposes. 

Let $m\geq 2$ be given and  $\mathbb{B}\subset \mathbb{R}^{2m}$ be the $2m$-dimensional \textit{open} unit ball centered at the origin. We fix the standart symplectic form 
$$\omega=\sum_{i=1}^m dx_i\wedge dy_i$$
via the coordinates $z_i=x_i+iy_i$, $(z_1,...,z_m)\in \mathbb{C}^m\cong \mathbb{R}^{2m}$ and we fix the primitive $$\lambda=\frac{1}{2}\sum_{i=1}^m x_idy_i-y_idx_i.$$ 
Let $\varphi\in {\rm Diff}_c(\mathbb{B},\omega)$ be a compactly supported symplectomorphism. Then $\varphi^*\lambda-\lambda$ is a closed one form and therefore it is exact. This allows us to find a unique function, called \textit{the action of $\varphi$}, 
$$\sigma_{\varphi,\lambda}:\mathbb{B}\rightarrow \mathbb{R}$$
that satisfies
\begin{eqnarray}\label{action_general}
d\sigma_{\varphi,\lambda}=\varphi^*\lambda-\lambda \textrm{ and } \sigma_{\varphi,\lambda}=0 \textrm{ near }\partial \mathbb{B}.
\end{eqnarray}
As suggested by the notation, the action depends on the primitive of $\omega$, for the details in the case of $m=1$ see \cite{3sphere}. The average of the action, namely
\begin{eqnarray}\label{calabi_general}
{\rm CAL}(\varphi):=\int_W \sigma_{\varphi,\lambda} \,\omega^m
\end{eqnarray}
is \textit{the Calabi invariant of $\varphi$}, which is in turn independent of the primitive of $\omega$.

Now we are ready to give the scheme of the plug construction and show the connection between contact geometry of the plug and the classical invariants given above.

We consider  a smooth path $\{\varphi_t\}_{t\in [0,1]}\subset  {\rm Diff}_c(\mathbb{B},\omega)$ that connects the identity to the map $\varphi:=\varphi^1$ and  we pick $L>0$. We define the function 
$$\tau:=\sigma_{\varphi,\lambda}+L:\mathbb{B}\rightarrow \mathbb{R}.$$ 
It is clear from the definition (\ref{action_general}) that $\sigma_{\varphi,\lambda}$ vanishes near the boundary of $\mathbb{B}$ and therefore the minimum of $\tau$ on $\mathbb{B}$ exists. Now we assume that  
\begin{eqnarray}\label{tau_bigger_0}
\min_{\mathbb{B}} \tau>0.
\end{eqnarray}
Then the map $$g: \mathbb{R}\times \mathbb{B} \rightarrow \mathbb{R}\times \mathbb{B},\; (s,z)\mapsto (s-\tau(z),\varphi(z))$$
defines a free $\mathbb{Z}$-action on $ \mathbb{R}\times \mathbb{B}$. Hence we get the quotient manifold, denoted by $M$. It is clear that the 1-form $ds+\lambda$ is a contact form on $\mathbb{R}\times \mathbb{B}$ and it is invariant under the action of $g$, namely $g^*(ds+\lambda)=ds+\lambda$. Hence 
$ds+\lambda$ induces a contact form $\eta$ on $M$ determined by the equation $p^*\eta=ds+\lambda$ where 
$$p:\mathbb{R}\times \mathbb{B}\rightarrow M$$ 
is the quotient map. As $g$ maps the graph $\{(\tau(z),z)\,|\,z\in \mathbb{B}\}$
of $\tau$ onto $\{0\}\times \mathbb{B}$, the subset $M_0$ enclosed by 
these two hypersurfaces is a fundamental domain. By Fubini's theorem, we have
\begin{eqnarray*}
{\rm vol}(M,\eta)
&=&{\rm vol}(M_0,ds+\lambda)\\
&=&\int_{M_0} (ds\wedge \lambda)\wedge (ds\wedge \lambda)^m\\
&=&\int_{M_0} ds\wedge \omega^m\\
&=&\int_{\mathbb{B}} \tau\, \omega^m\\
&=&\int_{\mathbb{B}} (L+\sigma)\, \omega^m\\
&=&L\pi^m+\int_{\mathbb{B}} \sigma_{\varphi,\lambda}\, \omega^m\\
&=&L\pi^m+ {\rm CAL}(\varphi)
\end{eqnarray*}
Now let  $U$ be a small neighborhood of the boundary of $\mathbb{B}$, 
on which $\tau\equiv L$ and $\varphi={\rm id}$. 
Then $g(s,z)=(s-L,z)$ on $\mathbb{R}\times U$ and therefore $p(\mathbb{R}\times U)$ is diffeomorphic to 
$\mathbb{R}/L\mathbb{Z}\times U$ and $\eta$ is identified with $ds+\lambda$ on this subset. We note that with this identification, the Reeb vector field  $R_\eta$ associated to $\eta$ reads as $\partial_s$ and its flow is simply the translation along the coordinate $s$ on $\mathbb{R}/L\mathbb{Z}\times U$. Moreover, it is not hard to see that $p(\{0\}\times \mathbb{B})$ is a global hypersurface of sections for the flow of $R_\eta$ with the first return time map $\tau$ and first return map $\varphi$. We also note that for each $s\in \mathbb{R}$, the restriction $p_{|\{s\}\times \mathbb{B}}$ pulls back $d\eta$ to $\omega$.

After carrying out the above construction for all $\varphi^t$, we may view the contact manifold $(M,\eta)$ as a smooth deformation of the contact manifold $(\mathbb{R}/L\mathbb{Z}\times \mathbb{B},ds+\lambda)$, provided that 
\begin{eqnarray}\label{tau_t_bigger_0}
\min_{\mathbb{B}} \tau_t>0.
\end{eqnarray}
for all $t\in [0,1]$, where 
$$\tau_t:=\sigma_{\varphi^t,\lambda}+L.$$
Moreover since $\{\varphi^t\}$ is compactly supported, along this deformation, the contact forms coincide with $ds+\lambda$ on a neighbourhood of the boundary, which is identified with $\mathbb{R}/L\mathbb{Z}\times U$, where $U$ is a neighbourhood of $\partial \mathbb{B}$. 

Now we claim that by replacing the unit disc $\mathbb{D}$  with the unit ball in the proof of Proposition 3.1 in \cite{3sphere}, the manifold $M$ may be reparametrized as $\mathbb{R}/L\mathbb{Z}\times \mathbb{B}$ in such a way that the above properties of the contact form $\eta$ are inherited by a contact form $\beta$ on $\mathbb{R}/L\mathbb{Z}\times \mathbb{B}$. More concretely, we have the following statement. 
\begin{lem}\label{contact_mapping_torus}
 Let $\{\varphi_t\}_{t\in [0,1]}$ be an isotopy of compactly supported symplectomrphisms on the open unit disc $\mathbb{B}$ such that $\varphi^0={\rm id}$ and $\varphi^1=\varphi$. Let $L$ be a positive number such that for all $t\in[0,1]$
 $$\tau_t:=\sigma_{\varphi^t,\lambda}+L>0 $$
 on $\mathbb{B}$. Then there exists a contact form $\beta$ on $\mathbb{R}/L\mathbb{Z}\times \mathbb{B}$ such that the followings hold.
\begin{enumerate} [label=\textrm{(a\arabic*)}]
 \item \label{a1}${\rm vol}(\mathbb{R}/L\mathbb{Z}\times \mathbb{B},\beta)=L\pi^m+ {\rm CAL}(\varphi)$.
\item \label{a2}$\beta=ds+\lambda$ on a neighbourhood of the boundary of $\,\mathbb{R}/L\mathbb{Z}\times \mathbb{B}$. In particular,  $R_\beta=\partial_s$ near the boundary and its flow is globally defined.  
 \item \label{a3}The hypersurface $\{0\}\times \mathbb{B}$ is transverse to the flow of $R_\beta$ and any Reeb orbit intersects $\{0\}\times \mathbb{B}$ in future and past.
 \item \label{a4} After the identification $\{0\}\times \mathbb{B}\cong  \mathbb{B}$, $\varphi$ is the first return map and $\tau:=\tau_1$ is the first return time associated to the hypersurface 
 $\{0\}\times \mathbb{B}$. 
 \item \label{a5}$\beta$ is smoothly isotopic to $ds+\lambda$ through a path of contact forms that coincide with $ds+\lambda$ on a fixed neighbourhood of the boundary of $\mathbb{R}/L\mathbb{Z}\times \mathbb{B}$.
\end{enumerate}
\end{lem}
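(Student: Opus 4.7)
The plan is to transport $\eta$ from the abstract quotient $M$ to the concrete cylinder $\mathbb{R}/L\mathbb{Z}\times\mathbb{B}$ by means of a suitable diffeomorphism $\Psi:\mathbb{R}/L\mathbb{Z}\times\mathbb{B}\to M$ and then set $\beta:=\Psi^{*}\eta$. The five listed properties will follow by translating the corresponding properties of $\eta$ that have already been established in the paragraphs preceding the lemma. I would require that $\Psi$ be the identity, in the obvious identification, on a neighborhood of the boundary and that it send $\{0\}\times\mathbb{B}$ onto $p(\{0\}\times\mathbb{B})$ by the identity on $\mathbb{B}$.

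The construction of $\Psi$ is carried out on the universal cover: I would build a diffeomorphism $\tilde\Psi:\mathbb{R}\times\mathbb{B}\to\mathbb{R}\times\mathbb{B}$ that intertwines the deck transformation $(s,z)\mapsto(s+L,z)$ of the source with $g^{-1}$. Writing $\tilde\Psi(s,z)=(\xi(s,z),\psi(s,z))$, I would smoothly extend the given isotopy to a family $\{\mu^{u}\}_{u\in\mathbb{R}}$ of compactly supported symplectomorphisms by setting $\mu^{u}=\varphi^{u}$ on $[0,1]$ and $\mu^{u+1}=\varphi^{-1}\circ\mu^{u}$ elsewhere, and then put $\psi(s,z):=\mu^{s/L}(z)$; this satisfies $\psi(s+L,z)=\varphi^{-1}(\psi(s,z))$ as required. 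The first component $\xi$ is then forced by the equivariance condition $\xi(s+L,z)=\xi(s,z)+\tau(\varphi^{-1}(\psi(s,z)))$ and can be obtained by integrating a smooth $s$-reparametrization. Compact support of the isotopy makes $\mu^{u}$ the identity near $\partial\mathbb{B}$ for every $u$; together with $\tau\equiv L$ in that region, this forces $\tilde\Psi$ to be the identity there, so $\Psi$ restricts to the obvious identification on $\mathbb{R}/L\mathbb{Z}\times U$. This is the verbatim higher-dimensional analogue of the construction in the proof of Proposition~3.1 of \cite{3sphere}, which uses no two-dimensional feature of the disc.

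Given such a $\Psi$, property (a2) is immediate since $\eta=ds+\lambda$ on $p(\mathbb{R}\times U)$. Property (a1) follows from the diffeomorphism invariance of $\int\alpha\wedge(d\alpha)^{m}$ together with the volume computation ${\rm vol}(M,\eta)=L\pi^{m}+{\rm CAL}(\varphi)$ already established above. Properties (a3) and (a4) are the pullbacks under $\Psi$ of the corresponding statements about $p(\{0\}\times\mathbb{B})$ being a global hypersurface of section for the Reeb flow of $\eta$ with first return time $\tau$ and first return map $\varphi$. For (a5), I would apply the construction to the truncated isotopies $\{\varphi^{rt}\}_{t\in[0,1]}$ as $r$ runs over $[0,1]$; the hypothesis $\tau_{t}>0$ ensures every intermediate quotient is well-defined, and the common compact support produces a fixed boundary neighborhood on which each member of the resulting smooth family $\beta^{r}$ equals $ds+\lambda$, with $\beta^{0}=ds+\lambda$ and $\beta^{1}=\beta$. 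The principal technical burden is the bookkeeping in producing $\Psi$ and verifying its smooth dependence on the isotopy; since the 3-dimensional construction of \cite{3sphere} is purely formal and does not exploit any peculiar feature of the disc, the required adaptations to the higher-dimensional ball are only notational.
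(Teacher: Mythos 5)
Your overall strategy — pull $\eta$ back to $\mathbb{R}/L\mathbb{Z}\times\mathbb{B}$ along a diffeomorphism built on the universal cover from the given isotopy — is exactly the paper's intended route: the paper simply defers to the proof of Proposition 3.1 in \cite{3sphere} with $\mathbb{D}$ replaced by $\mathbb{B}$, and you are unpacking that construction. However, the extension $\mu^u$ as you state it is not well-defined. If $\mu^u=\varphi^u$ on $[0,1]$ and $\mu^{u+1}=\varphi^{-1}\circ\mu^u$, then applying the extension rule at $u=0$ gives $\mu^1=\varphi^{-1}\circ\mu^0=\varphi^{-1}$, which contradicts $\mu^1=\varphi^1=\varphi$ unless $\varphi$ is an involution. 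The equivariance you want, $\psi(s+L,z)=\varphi^{-1}(\psi(s,z))$, forces $\psi(L,\cdot)=\varphi^{-1}$, so on $[0,1]$ you must take the \emph{inverse} path, $\mu^u=(\varphi^u)^{-1}$, not $\mu^u=\varphi^u$.

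Two further details worth recording. First, even after the sign is fixed, the glued family $\mu^u$ is only continuous at the integers in general; to make it smooth you should first replace $\varphi^t$ by $\varphi^{\rho(t)}$ for a smooth nondecreasing $\rho:[0,1]\to[0,1]$ equal to $0$ near $0$ and $1$ near $1$, so that the family is constant near the endpoints. Second, "$\xi$ is forced by equivariance" overstates things: equivariance determines $\xi$ on $\mathbb{R}$ from its values on $[0,L]$, but you still need to choose $\xi|_{[0,L]\times\mathbb{B}}$ increasing in $s$, with $\xi(0,z)=0$, $\xi(L,z)=\tau(\varphi^{-1}(z))>0$, and compatible $s$-derivatives at $s=0$ and $s=L$ (a linear-in-$s$ choice works once the isotopy is constant near the endpoints), and then verify that $(s,z)\mapsto(\xi(s,z),\psi(s,z))$ is a diffeomorphism (which follows because $\psi(s,\cdot)$ is a diffeomorphism and $\partial_s\xi>0$). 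With these repairs the argument goes through, and your treatment of (a1)-(a5), including the truncated-isotopy argument for (a5), matches the paper's.
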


\section{Radial hamiltonians on the ball}
The outcome of the above discussion concerning our problem is the following. If one comes up a suitable symplectic isotopy of the ball, in the sense that the action of the isotopy satisfies (\ref{tau_t_bigger_0}), the dynamics of the time-one-map is suitably arranged and the action and the Calabi invariant are suitably bounded, then one has a contact form $\beta$ on $\mathbb{R}/L\mathbb{Z}\times \mathbb{B}$ with desired contact volume and action spectrum of the Reeb dynamics. In this section, we construct an isotopy, which leads to a contact form on $\mathbb{R}/L\mathbb{Z}\times \mathbb{B}$ with arbitrarly large systolic ratio, using radial hamiltonians on the ball. Again, $\mathbb{B}\subset \mathbb{R}$ denotes the open unit ball and $m\geq 2$. 
\begin{lem}\label{ball_map} Let $L>0$ be given. Then for any $\varepsilon>0$, there exists a positive integer $n$ and a compactly supported isotopy $\{\varphi_t\}_{t\in [0,1]}\subset {\rm Diff}_c(\mathbb{B},\omega)$ such that $\varphi^0={\rm id}$ and the followings hold.
\begin{enumerate}[label=\textrm{(b\arabic*)}]
    \item \label{b1} $\sigma_{\varphi^t,\lambda}\geq -L+L/n$ for all $t\in [0,1]$.
    \item \label{b2} ${\rm CAL}(\varphi^1)+L\pi^m\leq\varepsilon$.
    \item \label{b3} All the fixed points of $\varphi^1$ have non-negative action.
    \item \label{b4} All the periodic points of $\varphi^1$ that are not fixed points have period at least $n$. 
\end{enumerate}
\end{lem}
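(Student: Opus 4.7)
My plan is to construct $\{\varphi^t\}$ as the autonomous hamiltonian flow $\{\phi^t_H\}$ of a radial hamiltonian $H(z) = h(I(z))$ on $\mathbb{B}$, where $I(z)=\tfrac{1}{2}|z|^2$ and $h:[0,1/2]\to\mathbb{R}$ is a smooth function vanishing on a neighborhood of $1/2$. Because $H$ is invariant under the diagonal $U(1)$-action on $\mathbb{B}$, the function $I$ is preserved along the flow and on each level $\{I=I_0\}$ the map $\phi^t_H$ acts by the uniform rotation $z\mapsto e^{ith'(I_0)}z$. This setup directly generalizes the disc construction of \cite{3sphere} to arbitrary dimension.

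The explicit form of the flow makes the relevant quantities completely explicit. Since $\iota_{X_H}\lambda-H=h'(I)I-h(I)$ is preserved along the flow, the action is $\sigma_{\varphi^t,\lambda}(z)=t\bigl(h'(I)I-h(I)\bigr)$. Using the radial-coordinate formula $\int_{\mathbb{B}}f(I)\,\omega^m=m(2\pi)^m\int_0^{1/2}f(I)\,I^{m-1}\,dI$ and integrating by parts, one finds ${\rm CAL}(\varphi^1)=-(m+1)\int_{\mathbb{B}}h\,\omega^m$. The fixed points of $\varphi^1$ are the origin together with the spheres $\{I=I_0\}$ with $h'(I_0)\in 2\pi\mathbb{Z}$, and a non-fixed periodic point has minimal period equal to the denominator of $h'(I_0)/(2\pi)$ in lowest terms. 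Conditions \ref{b1}--\ref{b4} translate into pointwise requirements on $h$: \ref{b1} $h'(I)I-h(I)\ge -L+L/n$ on $[0,1/2]$ (the tightest case being $t=1$); \ref{b2} $(m+1)\int_{\mathbb{B}} h\,\omega^m\ge L\pi^m-\varepsilon$; \ref{b3} $h(0)\le 0$ and $h(I_0)\le h'(I_0)I_0$ whenever $h'(I_0)\in 2\pi\mathbb{Z}$; \ref{b4} $h'$ avoids the discrete set $\{2\pi p/q:2\le q\le n-1,\ \gcd(p,q)=1\}$.

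What remains is to construct $h$ satisfying these four inequalities, which I plan to do by transcribing the construction of \cite{3sphere} for $m=1$, the only adjustment being that the new weight $I^{m-1}$ in the Calabi integral forces a repositioning of the bulk region towards larger $I$. Concretely, fix a large integer $N$ of order $L$ and let $h$ approximate a linear ramp of slope $\approx 2\pi N$ on a bulk sub-interval of $(0,1/2)$, with $h(0)=-C\le 0$ chosen both to make the action at the origin non-negative and to give enough Calabi budget for \ref{b2}. This is followed by a carefully designed descent of $h'$ from $\approx 2\pi N$ down to $0$ inside a thin collar near $I=1/2$. The hard part, which is the subtle step in the two-dimensional paper, is arranging \ref{b4} compatibly with \ref{b3}: a continuous $h'$ descending from $2\pi N$ to $0$ must pass through every value in between, so the descent has to be organized into very short passages confined to safe windows around each integer level $2\pi k$, with the fixed-point inequality $h(I_0)\le h'(I_0)I_0$ secured at every such crossing, so that no denominator $q\le n-1$ ever occurs off the integer levels. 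Once the profile of $h$ is in place, the four conditions follow by direct computation from the formulas above.
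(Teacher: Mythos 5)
Your approach --- a single radial Hamiltonian $H=h(I)$ --- is genuinely different from the paper's and cannot satisfy all four conditions once $\varepsilon<L\pi^m$. The obstruction is already visible from \ref{b3} alone. For radial $H$, the time-one map rotates each level $\{I=I_0\}$ by the angle $\pm h'(I_0)$, so every interior critical point $I^*\in(0,1/2)$ of $h$ yields a whole sphere of fixed points of $\varphi^1$, with action $\sigma(I^*)=h(I^*)-I^*h'(I^*)=h(I^*)$; condition \ref{b3} thus forces $h(I^*)\ge 0$. Combined with $\sigma(0)=h(0)\ge 0$ at the origin and $h\equiv 0$ near $\partial\mathbb{B}$, this forces $h\ge 0$ on all of $[0,1/2]$, since a dip below zero would produce a negative interior local minimum. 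But then ${\rm CAL}(\varphi^1)=(m+1)\int_{\mathbb{B}}H\,\omega^m\ge 0$ and \ref{b2} fails. (With your flipped sign conventions $\sigma=Ih'-h$ and ${\rm CAL}=-(m+1)\int h$, every inequality reverses and the contradiction is the same.) Independently, \ref{b4} is also unattainable by a radial profile: $h'$ is continuous, hence by the intermediate value theorem it passes through every value between $0$ and its extremum, in particular through many values $2\pi p/q$ with $2\le q\le n-1$, and each such level is a sphere of $q$-periodic orbits of $\varphi^1$. The ``safe windows around integer levels'' you describe cannot exist, so any ramp from a large slope down to $0$ necessarily manufactures short periodic orbits.

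The construction in the paper (and equally in \cite{3sphere} --- your plan to transcribe the $m=1$ case misremembers it, as that paper also does not use a pure radial profile) escapes these obstructions by composing two pieces, $\varphi^t=\varphi_+^t\circ\varphi_-^t$. The map $\varphi_+$ is a radial rotation by a \emph{small} angle at most $2\pi/n$, so its bulk fixed points are only the origin and the neighbourhood of $\partial\mathbb{B}$, and its other periodic points already have period $\ge n$; the map $\varphi_-$ is a commuting product of Hamiltonian diffeomorphisms supported in disjoint small balls $B_j$ that are cyclically permuted by $\varphi_+$. The large negative action needed to drive the Calabi invariant down is created inside the $B_j$'s, but since $\varphi_+^1$ displaces each $B_j$ off itself, none of the fixed points of $\varphi_-^1$ inside those balls survive as fixed points of $\varphi^1$. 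That displacement mechanism is precisely what lets one combine a near-zero minimal rotation (hence \ref{b3}, \ref{b4}) with a very negative Calabi invariant (hence \ref{b2}), and it is the idea missing from your proposal.
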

\begin{proof} As in \cite{3sphere},  we consider an autonomous radial hamiltonian 
$$H:\mathbb{B}\rightarrow \mathbb{R},\; H(z)=h(r^2)$$
on the unit ball such that $h$ is non-negative and vanishes near $r^2=1$. 
Here, $z=(z_1,\dots,z_m)$, $r^2=\sum_{i=1}^m r_i^2$, $z_i=r_ie^{i\theta_i}$ for $i=1,\ldots,m$. As before, we take the primitive
$$\lambda=\frac{1}{2}\sum_{i=1}^m x_idy_i-y_idx_i=\frac{1}{2}\sum_{i=1}^m r_i^2d\theta_i$$ 
on $\mathbb{B}$.
The standard symplectic form $\omega=d\lambda$ decomposes as $\omega=\sum_{i=1}^m \omega_i$, where $\omega_i:=p_i^*\omega_0$, $p_i(z)=z_i$ is the 
projection and $\omega_0$ is the standard symplectic form on $\mathbb{R}^2$.

A straightforward computation shows that the hamiltonian vector field of $H$ reads as
$$X_H=-2h'(r^2)\sum_{i=1}^m \partial_{\theta_i}$$ 
and the flow of $X_H$ is given by 
$$\varphi^t(z)=e^{-2h'(r^2)ti}z.$$
It is easy to check that the action $\sigma_{\varphi^t}$ of a compactly supported hamiltonian isotopy $\varphi^t$ associated to a time-dependent hamiltonian $H_t$ is given by 
\begin{eqnarray}\label{action_formula}
\sigma_{\varphi^t,\lambda}(z)
=\int_{\{z\mapsto\varphi^s(z)\}_{s\in [0,t]}}\lambda + \int_0^tH_s(\varphi^s(z))\,ds.
\end{eqnarray}

\textit{Notation:} From now on, we omit the subscript for the primitive in the notation of the action whenever the fixed primitive $\lambda$ is meant. 

Via (\ref{action_formula}) we compute the action of the time-one-map.
\begin{eqnarray*}
\sigma_{\varphi^1}(z)
&=&\int_{\{z\mapsto\varphi^t(z)\}}\lambda + \int_0^1H_t(\varphi^t(z))\,dt\\
&=&\int_0^1\lambda(X_H)\,dt + \int_0^1H(e^{-2h'(r^2)ti}z)\,dt\\
&=&\int_0^1 (\frac{1}{2}\sum_{i=1}^m r_i^2d\theta_i)(-2h'(r^2)\sum_{i=1}^m\partial_{\theta_i})\,dt 
+\int_0^1h(||e^{-2h'(r^2)ti}z||^2)\,dt\\
&=&h(r^2)-r^2h'(r^2).
\end{eqnarray*}

\emph{Notation:} We let $\Omega_{\mathbb{B}}$ and $\Omega_{S^{2m-1}}$ be the standard riemannian volume forms on the corresponding manifolds and we write $V(\cdot)$ for the volumes of these spaces (or their subsets) with respect to these volume forms.

We want to compute the Calabi invariant. To this end we fix a reparametrization of the unit ball as follows.
\begin{eqnarray}\label{daleth}
\daleth: (0,1)\times S^{2m-1}\rightarrow \mathbb{B},\; (r,z)\mapsto rz.
\end{eqnarray}
We have 
$$\daleth^*(\Omega_{\mathbb{B}})=\frac{1}{m!}(\daleth^*\omega)^m=r^{2m-1}dr\wedge \Omega_{S^{2m-1}}.$$
Hence 
\begin{eqnarray*}
 {\rm CAL}(\varphi^1)
 &=& \int_{\mathbb{B}} \sigma \,\omega^m\\
 &=& m!\int_{\mathbb{B}} \sigma \,\Omega_{\mathbb{B}}\\
 &=& m!\int_{(0,1)\times S^{2m-1}}(h(r^2)-r^2h'(r^2))\,r^{2m-1}dr\wedge \Omega_{S^{2m-1}}\\
 &=&m!V(S^{2m-1})\int_0^1 (r^{2m-1}h(r^2)-r^{2m+1}h'(r^2))\,dr\\
 &=&2m(m+1)\pi^m\int_0^1r^{2m-1}h(r^2)\,dr.
\end{eqnarray*}
We note that 
\begin{eqnarray}\label{CALham}
{\rm CAL}(\varphi^1)=(m+1) \int_{\mathbb{B}} H \,\omega^m.
\end{eqnarray}

Now given $\delta\in (0,1/2)$ small enough, we pick a function $\chi_\delta: [0,\infty)\rightarrow \mathbb{R}$
supported in $[0,1)$ satisfying
\begin{itemize}
 \item $\chi_\delta(s)=1-\delta-s $ on $[0,1-2\delta]$;
 \item $\max \{1-\delta-s,0\}\leq \chi_\delta(s)\leq \max \{(1-\delta)(1-s)\}$;
 \item $\chi'_\delta(s)=-1$ for $s\in [0,1-2\delta]$ and $\chi'_\delta(s)\in [-1,0]$ for all $s$; 
 \item $0\leq \chi_\delta(s)-s\chi'_\delta(s)\leq 1-\delta$.
\end{itemize}
We put 
$$H_+(z)=\frac{\pi}{n}\chi_\delta(r^2)$$
so that the hamiltonian flow $\varphi_+^t$ satisfies $$\varphi_+^t(z)=e^{\frac{i2\pi t}{n}}z\; \textrm{for}\; r< \sqrt{1-2\delta}$$
and $\varphi_+^t$ is a rotation by an angle less or equal to $2\pi/n$ on $\mathbb{B}$.
We have 
$$\sigma_{\varphi_+^t}(z)=\frac{t\pi}{n}\left(\chi_\delta(r^2)-r^2\chi'_\delta(r^2)\right)$$
\begin{eqnarray}\label{phi+action}
\Rightarrow\; 
0\leq \sigma_{\varphi_+^t}\leq \frac{t\pi}{n}(1-\delta)\leq \frac{\pi}{n}\;\;\textrm{for all}\;\;t\in[0,1]. 
\end{eqnarray}
Using (\ref{CALham}) and (\ref{phi+action}), we get
\begin{eqnarray}\label{phi+calabi}
\Rightarrow\; 0\leq {\rm CAL}(\varphi_+^1)\leq \frac{\pi}{n}(m+1)!V(\mathbb{B})=\frac{(m+1)\pi^{m+1}}{n}.
\end{eqnarray}

We identify  $\mathbb{B}\setminus\{0\}$ with $(0,1)\times S^{2m-1}$ via (\ref{daleth}) and in these coordinates 
$\varphi_+^t$ is just a rotation along the fibres of the Hopf fibration $S^{2m-1}\rightarrow \mathbb{C}P^{m-1}$.
We trivialize the Hopf fibration over $\mathbb{C}^{m-1}\subset\mathbb{C}P^{m-1}$ and get a codimension-zero embedding 
$$g: \mathbb{D}^*  \times \mathbb{C}^{m-1} \rightarrow \mathbb{B},$$
$$g(re^{i\theta},z)=\left(\frac{re^{i\theta}z}{\sqrt{1+||z||^2}},\frac{re^{i\theta}}{\sqrt{1+||z||^2}}\right)\in \mathbb{C}^{m-1}\times \mathbb{C}$$
where $\mathbb{D}^*$ is the punctured unit disc. 

We note that the image of $g$ is invariant under 
$\varphi_+^t$. We define the sectors 
$$S_k:=\left\{re^\theta\in \mathbb{D}^*\,|\, (k-1)\frac{2\pi}{n}<\theta< k\frac{2\pi}{n}\right\},\; k=1,...,n. $$
We note that $\varphi_+^t$ is constant on the second component of the product $\mathbb{D}^*  \times \mathbb{C}^{m-1}$
and on the first component, it behaves as the corresponding disc map in \cite{3sphere}. For each $k$, we define an open subset $U_k:=g(S_k\times \mathbb{C}^{m-1})$ in $\mathbb{B}^{2m}$.    

Let $\rho\in (0,1)$ and $n>0$ be given. We claim that there is a finite collection of non-overlapping closed balls in $U_1$ such that
\begin{itemize}
\item the ratio between total volume of the balls 
and the volume of $U_1$ is at least $\rho$ ;
\item  the radius of each ball is less or equal to $\pi/n$.
\end{itemize}
The existence of such a collection of balls follows from the fact that the open set $U_1$ is a pre-compact subset of $\mathbb{B}$ with finite volume. 


We have a free $\mathbb{Z}_n$-action on $\mathbb{B}\setminus \{0\}$, given by the rotation by the angle $2\pi/n$ along the 
Hopf fibration, which is isometric. The corresponding action on $\mathbb{D}^*\times \mathbb{C}^{m-1}$ is just the rotation along 
the disc, which permutes the sectors $S_k\times \mathbb{C}^{m-1}$. Hence the  $\mathbb{Z}_n$-action permutes $U_k$'s. Moreover the volumes of $U_k$'s are identical. Now we take the collection $(B_j:=B(z_j,r_j))_{j\in J}$ given by the union of $\mathbb{Z}_n$- orbits of the balls that we choose in $U_1$. Then we have 
$$\sum_{j\in J} V (B(z_j,r_j))\geq \rho \,V(\mathbb{B})$$
and $r_j\leq \pi/n$ for all $j\in J$.

Let $\epsilon\in (0,1/2)$ be small and $c>0$. For each $j\in J$, we consider
the hamiltonian
$$K_j:\mathbb{B}\rightarrow \mathbb{R},\; K_j(z)=-c\chi_\epsilon\left(\frac{||z-z_j||^2}{r_j^2}\right).$$
Let $\psi_j^t$ be the hamiltonian flow of $K_j$. Since each $K_j$ is supported in $B_j$, the flow $\psi_j^t$ and the 
action $\sigma_{\psi_j^t}$ are supported in $B_j$. We compute the action via the translated hamiltonian 
$\tilde{K}_j(z)=-c\chi_\epsilon\left(\frac{||z||^2}{r_j^2}\right)$. Let $\tilde{\psi}_j^t$ be the hamiltonian flow of $\tilde{K}_j$. 
We have
$$\sigma_{\tilde{\psi}_j^t}(z)=-ct\left(\chi_\epsilon\left(\frac{||z||^2}{r_j^2}\right)-
\frac{||z||^2}{r_j^2}\chi'_\epsilon\left(\frac{||z||^2}{r_j^2}\right)\right)$$
and on $B(z_j, r_j\sqrt{1-2\epsilon})$,
$$\sigma_{\tilde{\psi}_j^t}(z)=-ct(1-\epsilon).$$
Moreover on $B_j$,  
$$0\geq \sigma_{\tilde{\psi}_j^t}\geq  -ct(1-\epsilon)\geq -c$$
for all $t\in [0,1]$. We compute
\begin{eqnarray*}
{\rm CAL}(\tilde{\psi}_j^1)
&=&\int \sigma_{\tilde{\psi}_j^1}\,\omega^m\\ 
&\leq& m!\int_{B(z_j, r_j\sqrt{1-2\epsilon})} \sigma_{\tilde{\psi}_j^1}\, \Omega_{\mathbb{B}}\\
&=&-m!c(1-\epsilon)\,V(B(z_j, r_j\sqrt{1-2\epsilon}))\\
&\leq&-c\pi^m(1-2\epsilon)^{m+1}r_j^{2m}.
\end{eqnarray*}
We pick an area preserving diffeomorphism $\tau_j: \mathbb{B}\rightarrow \mathbb{B}$ such that 
$$\tau_j(z)=z+z_j\;\;{\rm on }\;\; B(0, r_j).$$ 
We have 
$$\tilde{\psi}_j^t=\tau^{-1}_j\circ \psi_j^t\circ \tau_j.$$
It follows from the formula (\ref{action_formula}) that
$$\sigma_{\psi_j^t,\lambda}\circ\tau_j =\sigma_{\tau^{-1}_j\circ \psi_j^t\circ \tau_j,\tau_j^*\lambda}
=\sigma_{\tilde{\psi}_j^t,\tau_j^*\lambda}$$
and
$${\rm CAL}(\psi_j^1)={\rm CAL}(\tilde{\psi}_j^1).$$
For a detailed account on the behaviour of the action and Calabi invariant under the conjugation, we refer to Lemma 2.2  and Proposition 2.4 in \cite{3sphere}. 

We put $z_j:=(u_1+iv_1,\dots,u_m+iv_m)$ and note that
$$\tau_j^*\lambda-\lambda=\frac{1}{2}\sum (u_jdy_j-v_jdx_j)\;\;{\rm on }\;\; B(0, r_j).$$
Then there is some function $\eta:\mathbb{B}\rightarrow \mathbb{R}$ such that 
$\tau_j^*\lambda-\lambda=d\eta$ and 
$$\eta(z)=\frac{1}{2}\sum_{i=1}^m (u_jy_j-v_jx_j)=\frac{1}{2}\,{\rm Im}\langle z,z_j\rangle_{\mathbb{C}} \;\;{\rm on }\;\; B(0, r_j).$$
Hence we get
$$\sigma_{\tilde{\psi}_j^t,\tau_j^*\lambda}=\sigma_{\tilde{\psi}_j^t,\lambda}+\eta\circ\tilde{\psi}_j^t-\eta.$$
The above equation follows essentially from the formula (\ref{action_formula}) and for the details, we refer to Lemma 2.2 in \cite{3sphere}.

Since $\sigma_{\tilde{\psi}_j^t}$ is supported on $B(0,r_j)$, we get 
$$\Rightarrow\; |\sigma_{\tilde{\psi}_j^t,\tau_j^*\lambda}-\sigma_{\tilde{\psi}_j^t,\lambda}|
\leq \sup_{B(0,r_j)} \eta-\inf_{B(0,r_j)} \eta\leq r_j||z_j||\leq r_j\leq \frac{\pi}{n}.$$
Hence we get
$$-c-\frac{\pi}{n}\leq \sigma_{\psi_j^t}\leq \frac{\pi}{n}.$$
Since the diffeomorphisms $\{\psi_j^t\}_{j\in J}$ have disjoint support, they commute pairwise. We denote their composition by 
$\varphi_-^t$, which is the hamiltonian flow of the hamiltonian $H_-:=\sum_{j\in J}K_j$. 
We note that the support of $\varphi_-^t$ is contained in the union of $B_j$'s and each $B_j$ is invariant under $\varphi_-^t$. 
Hence we have 
\begin{eqnarray}\label{phi-action}
-c-\frac{\pi}{n}\leq \sigma_{\varphi_-^t}\leq \frac{\pi}{n},
\end{eqnarray}
and 
\begin{eqnarray*}
{\rm CAL}(\varphi_-^1)&=&\sum_{j\in J} {\rm CAL}(\psi_j^1)\\
&\leq& \sum_{j\in J} -c\pi^m(1-2\epsilon)^{m+1}r_j^{2m}\\
&=&-c(1-2\epsilon)^{m+1}\sum_{j\in J}\pi^mr_j^{2m}\\
&=&-m!c(1-2\epsilon)^{m+1}\sum_{j\in J}V(B_j).
\end{eqnarray*}
Hence
\begin{eqnarray}\label{phi-calabi}
{\rm  CAL}(\varphi_-^1)\leq -c(1-2\epsilon)^{m+1}\rho\pi^m. 
\end{eqnarray}
Above in the first equation, we use the fact that for fixed a symplectic form, 
$${\rm CAL}: {\rm Diff}_c(\mathbb{B},\omega)\rightarrow \mathbb{R}$$
is a group homomorphism, for details see \cite{3sphere}, \cite{Mcduff}. Now we assume that $\delta$ is small enough so that $\bigcup_{j\in J} B_j\subset B(0,\sqrt{1-2\delta})$. We consider 
the isotopy 
$$\varphi^t=\varphi_+^t\circ \varphi_-^t$$
which is compactly supported by the choice of $ \delta$ and connects the identity map to $\varphi^1=\varphi_+^1\circ \varphi_-^1$. 
We note that the fixed points of $\varphi^1$ are the origin and the points forming a neighborhood  
of $\partial \mathbb{B}$, which is outside of $B(0,\sqrt{1-2\delta})$. As $\bigcup_{j\in J}  B_j\subset B(0,\sqrt{1-2\delta})$, 
$\varphi_+^1$ permutes $B_j$'s so that none of the fixed points of $\varphi_-^1$ remains as a fixed point. Similarly, any  
periodic point of $\varphi^1$ within  $B(0,\sqrt{1-2\delta})$ has period at least $n$ and any other periodic point of 
$\varphi^1$ is simply a periodic point of $\varphi_+^1$ and has period at least $n$.  
By the formula
$$\sigma_{\varphi^t}=\sigma_{\varphi^t_+}\circ\varphi^t_-+\sigma_{\varphi^t_-},$$
which is a straightforward generalizatin of Lemma 2.2 (ii) in  \cite{3sphere}, we get
$\sigma_{\varphi^1}(0)=\frac{\pi}{n}(1-\delta)$
for the fixed point $0$ and $\sigma_{\varphi^1}(z)\geq 0$ for any other fixed point $z$. Hence \ref{b3} and \ref{b4} are established. 

Using (\ref{phi+action}) and (\ref{phi-action}),
we get 
\begin{eqnarray}\label{phiaction}
 -c-\frac{\pi}{n}\leq \sigma_{\varphi^t}\leq \frac{2\pi}{n}
\end{eqnarray}
and by (\ref{phi+calabi}) and (\ref{phi-calabi}), we get
\begin{eqnarray}\label{phicalabi}
 {\rm CAL}(\varphi^1)\leq \frac{(m+1)\pi^{m+1}}{n}-c(1-2\epsilon)^{m+1}\rho\pi^m.
\end{eqnarray}
We put 
$$c=L-\frac{L+\pi}{n}$$
where $n$ is large enough and get
\begin{eqnarray}\label{phiactionL}
 -L+\frac{L}{n}\leq \sigma_{\varphi^t}\leq \frac{2\pi}{n}
\end{eqnarray}
for all $t\in [0,1]$. This establishes \ref{b1}. From (\ref{phicalabi}) we get
$$
{\rm  CAL}(\varphi^1)\leq -L\pi^m(1-2\epsilon)^{m+1}\rho+
 \frac{1}{n}\bigg[(m+1)\pi^{m+1}+(L+\pi)(1-2\epsilon)^{m+1}\rho\pi^m\bigg].
$$ 
Then we have 
\begin{eqnarray*}
{\rm CAL}(\varphi^1)+L\pi^m
&\leq &L\pi^{m}(1-(1-2\epsilon)^{m+1}\rho)\\
&&+\frac{1}{n}\bigg[(m+1)\pi^{m+1}+(L+\pi)(1-2\epsilon)^{m+1}\rho\pi^m\bigg]
\end{eqnarray*}
Now given $\varepsilon>0$,  we choose $\epsilon$ close enough to 0 and $\rho$ close enough to 1 so that
$$(1-(1-2\epsilon)^{m+1}\rho)\leq \frac{\varepsilon}{2L\pi^m}$$
and we choose $n$ large enough so that 
$$\frac{1}{n}\bigg[(m+1)\pi^{m+1}+(L+\pi)\pi^m\bigg]\leq \frac{\varepsilon}{2}.$$
With these choices we get
\begin{eqnarray*}
{\rm CAL}(\varphi^1)+L\pi^m
&\leq &L\pi^{m}(1-(1-2\epsilon)^{m+1}\rho)\\
&&+\frac{1}{n}\bigg[(m+1)\pi^{m+1}+(L+\pi)(1-2\epsilon)^{m+1}\rho\pi^m\bigg]\\
&\leq& L\pi^{m} \frac{\varepsilon}{2L\pi^m}+\frac{1}{n}\bigg[(m+1)\pi^{m+1}+(L+\pi)\pi^m\bigg]\\
&\leq &\frac{\varepsilon}{2}+\frac{\varepsilon}{2}=\varepsilon.
\end{eqnarray*}
This establishes \ref{b2}.
\end{proof}

\section{The plug and the systolic ratio on spheres}
In this section, we first combine the lemmata in the above sections in order to define the plug of any odd dimension (compare with Proposition 2 in \cite{general}). Next we prove that the contact systolic ratio on any sphere with the standard contact structure is unbounded. 
\begin{prop}\label{the_plug} Let $\epsilon>0$  and $L>0$ be given. Let $\lambda$ be the standard primitive of $\omega$ on the 2m-dimensional open unit ball $\mathbb{B}$. Then there exists a smooth contact form $\beta$ on $\mathbb{R}/L\mathbb{Z}\times \mathbb{B}$ such that the followings hold. 
\begin{enumerate} [label=\textrm{(p\arabic*)}]
 \item \label{p1}${\rm vol}(\mathbb{R}/L\mathbb{Z}\times \mathbb{B},\beta)\leq \epsilon$.
 \item \label{p2}$\beta=ds+\lambda$ on a neighbourhood of the boundary of $\,\mathbb{R}/L\mathbb{Z}\times \mathbb{B}$. In particular,  $R_\beta=\partial_s$ near the boundary and its flow is globally defined.
 \item \label{3} All closed orbits of $R_\beta$ have period at least $L$.  
 \item \label{p4}$\beta$ is smoothly isotopic to $ds+\lambda$ through a path of contact forms that coincide with $ds+\lambda$ on a fixed neighbourhood of the boundary of $\,\mathbb{R}/L\mathbb{Z}\times \mathbb{B}$.
\end{enumerate}
\end{prop}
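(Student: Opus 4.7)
The plan is to combine the two preceding results directly: use Lemma \ref{ball_map} to produce a symplectic isotopy of $\mathbb{B}$ whose time--one map has small Calabi invariant and suitably controlled periodic points, and then feed this isotopy into Lemma \ref{contact_mapping_torus} to turn it into a contact form on $\mathbb{R}/L\mathbb{Z}\times \mathbb{B}$. Essentially all of the analysis already lives inside those two lemmata; the task here is to check that the dictionary between symplectic and contact data translates (b1)--(b4) into (p1)--(p4).

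More concretely, first I would apply Lemma \ref{ball_map} with the same $L$ and with $\varepsilon = \epsilon$, obtaining an integer $n \geq 1$ and a compactly supported symplectic isotopy $\{\varphi^t\}_{t\in[0,1]}$ of $\mathbb{B}$ starting at the identity and satisfying (b1)--(b4). Property (b1) gives
\[
\tau_t := \sigma_{\varphi^t,\lambda} + L \;\geq\; L/n \;>\; 0
\]
on all of $\mathbb{B}$ and for all $t\in[0,1]$, which is precisely the hypothesis (\ref{tau_t_bigger_0}) needed to invoke Lemma \ref{contact_mapping_torus}. That lemma then produces the desired contact form $\beta$ on $\mathbb{R}/L\mathbb{Z}\times\mathbb{B}$.

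Properties (p1), (p2), (p4) are immediate consequences of the output of Lemma \ref{contact_mapping_torus}. Indeed, (a1) combined with (b2) gives
\[
{\rm vol}(\mathbb{R}/L\mathbb{Z}\times\mathbb{B},\beta) \;=\; L\pi^m + {\rm CAL}(\varphi^1) \;\leq\; \epsilon,
\]
which is (p1); the boundary behaviour (p2) is exactly (a2); and the contact isotopy (p4) is exactly (a5).

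The only statement that requires a genuine argument, and which I expect to be the main (though still short) step, is the period lower bound (p3). Here I would use (a3) and (a4): any closed orbit of $R_\beta$ meets $\{0\}\times \mathbb{B}$ transversely in a finite set $\{z,\varphi(z),\dots,\varphi^{k-1}(z)\}$ that is a $\varphi$-periodic orbit, and its total period equals
\[
T \;=\; \sum_{i=0}^{k-1}\tau\bigl(\varphi^i(z)\bigr) \;=\; kL + \sum_{i=0}^{k-1}\sigma_{\varphi,\lambda}\bigl(\varphi^i(z)\bigr).
\]
Now I split into two cases. If $z$ is a fixed point of $\varphi$ (so $k=1$), property (b3) gives $\sigma_{\varphi,\lambda}(z)\geq 0$ and hence $T\geq L$. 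If $z$ is not a fixed point, property (b4) forces $k\geq n$, while (b1) gives $\tau \geq L/n$ pointwise, so $T \geq k\cdot(L/n) \geq L$. Either way $T\geq L$, which establishes (p3) and finishes the proof.
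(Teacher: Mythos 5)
Your proof is correct and follows the same approach as the paper: apply Lemma \ref{ball_map}, check that \ref{b1} furnishes the positivity hypothesis needed for Lemma \ref{contact_mapping_torus}, read off \ref{p1}, \ref{p2}, \ref{p4} from \ref{a1}, \ref{a2}, \ref{a5} together with \ref{b2}, and establish the period bound \ref{3} by the same case distinction (fixed point via \ref{b3}, non-fixed periodic point via \ref{b1} and \ref{b4}).
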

\begin{proof}
We take the isotopy given in Lemma \ref{ball_map} and apply the Lemma \ref{contact_mapping_torus} to get the smooth contact form $\beta$. Combining \ref{b2} with \ref{a1} leads to \ref{p1}. The statement \ref{p2} and \ref{p4} are given by \ref{a2} and \ref{a5} respectively. 

By \ref{a4}, we know that a periodic orbit of $R_\beta$ corresponds to either a fixed point of $\varphi^1$ or to a periodic point of $\varphi^1$ that is not a fixed point. In the first case, if $z\in \mathbb{B}$ is the corresponding fixed point, then  the period is given by  
$$\tau(z)=\sigma_{\varphi^1}(z)+L\geq L$$ 
since $\sigma_{\varphi^1}(z)\geq 0$ by \ref{b3}. We note that by \ref{b1}, we have the global bound 
$$\tau=\sigma_{\varphi^1}+L\geq L/n.$$ 
Hence in the second case, if $z$ is the corresponding periodic point with period $k$, the period of the Reeb orbit is given by 
$$\sum _{j=0}^{k-1}\tau(\varphi^j(z))$$
where $\varphi^j=(\varphi^1)^j$ and by \ref{b4}, one has $k\geq n$ and therefore
$$\sum _{j=0}^{k-1}\tau(\varphi^j(z)) \geq k \frac{L}{n}\geq L.$$  
\end{proof}
In order to provide the first application of the plug construction we recall the standard contact structure on an odd sphere. 
For $m\geq 2$, we consider \emph{the standard contact form} $\alpha_0$ on the unit sphere $S^{2m+1}$ given by
$$\alpha_0=\frac{1}{2}\sum_{i=1}^{m+1} (u_idv_i-v_idu_i)$$
via the coordinates $z=(z_1,...,z_{m+1})\in S^{2m+1}\subset\mathbb{C}^{m+1}$, $z_j=u_j+iv_j$. 
The contact structure $\xi_{{\rm st}}:=\ker \alpha_0$ is called \textit{the standard contact structure}. The Reeb vector field $R_{\alpha_0}$ generates the Hopf fibration $\pi: S^{2m+1}\rightarrow \mathbb{C}P^m$ so that each 
closed orbit has period $\pi$. We also know that ${\rm vol}( S^{2m+1},\alpha_0)=\pi^{m+1}$, therefore
 $$\rho(S^{2m+1},\alpha_0)=\frac{(T_{{\rm min}}(\alpha_0))^{m+1}}{{\rm vol}(S^{2m+1},\alpha_0)}=\frac{\pi^{m+1}}{\pi^{m+1}}=1.$$
\begin{thm}\label{bigspheres} Given any $C>0$, there is a contact form $\alpha$ on $S^{2m+1}$ such that $\ker \alpha=\xi_{{\rm st}}$ and 
$$\rho(S^{2m+1},\alpha)\geq C.$$
\end{thm}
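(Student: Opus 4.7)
The plan is to identify ``most of'' $(S^{2m+1},\alpha_0)$ with a standard model of the form $(\mathbb{R}/\pi\mathbb{Z}\times\mathbb{B},\,ds+\lambda)$ and then insert the plug from Proposition \ref{the_plug} in place of this model, thereby shrinking the contact volume while keeping the minimal period bounded below by $\pi$.

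First I would use the Hopf flow to parametrize the complement of the codimension-two sphere $\Sigma:=S^{2m+1}\cap\{z_{m+1}=0\}$. Since the Reeb flow of $\alpha_0$ is $z\mapsto e^{2is}z$ with period $\pi$, and the slice $\{z_{m+1}>0,\ \mathrm{Im}\,z_{m+1}=0\}$ is a global section for this flow on $S^{2m+1}\setminus\Sigma$, the map
\[
\Phi:\mathbb{R}/\pi\mathbb{Z}\times\mathbb{B}\longrightarrow S^{2m+1}\setminus\Sigma,\qquad (s,z')\longmapsto e^{2is}\bigl(z',\sqrt{1-|z'|^2}\bigr)
\]
is a diffeomorphism, and a short computation in the polar coordinates $z_j=r_je^{i\theta_j}$ yields $\Phi^*\alpha_0=ds+\lambda$. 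Given $C>0$, I would then pick $\varepsilon<\pi^{m+1}/C$ and apply Proposition \ref{the_plug} with this $\varepsilon$ and $L=\pi$ to obtain a contact form $\beta$ on $\mathbb{R}/\pi\mathbb{Z}\times\mathbb{B}$ satisfying (p1)--(p4). Define $\alpha$ on $S^{2m+1}$ by $\alpha=\Phi_*\beta$ on $S^{2m+1}\setminus\Sigma$, extended by $\alpha=\alpha_0$ across $\Sigma$; the extension is smooth because $\beta$ equals $ds+\lambda$ near $\partial\mathbb{B}$, so $\Phi_*\beta$ already coincides with $\alpha_0$ on a neighbourhood of $\Sigma$.

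The volume of $\alpha$ equals that of $\beta$ up to the measure-zero set $\Sigma$, so $\mathrm{vol}(S^{2m+1},\alpha)\leq\varepsilon$ by (p1). For the minimal period, every closed orbit of $R_\alpha$ either lies in the interior of the plug, where (p3) guarantees period at least $\pi$, or enters the collar where $\alpha=\alpha_0$ and the Reeb vector field is $\partial_s$; such an orbit is trapped in a single $\mathbb{R}/\pi\mathbb{Z}$-fibre and closes up with period $\pi$, as do the Hopf orbits lying in $\Sigma$. Hence $T_{\min}(S^{2m+1},\alpha)\geq \pi$ and
\[
\rho(S^{2m+1},\alpha)\geq \frac{\pi^{m+1}}{\varepsilon}\geq C.
\]
To secure $\ker\alpha=\xi_{\mathrm{st}}$ rather than merely some contact structure isotopic to it, property (p4) furnishes a path of contact forms on $S^{2m+1}$ from $\alpha$ to $\alpha_0$, and Gray's stability theorem supplies a diffeomorphism $\psi$ with $\psi^*\ker\alpha=\xi_{\mathrm{st}}$; replacing $\alpha$ by $\psi^*\alpha$ preserves the systolic ratio. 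The one step requiring any genuine argument is the orbit-trapping claim in the collar, but this follows immediately from the Reeb field being $\partial_s$ there; everything else is bookkeeping arranged by Proposition \ref{the_plug}.
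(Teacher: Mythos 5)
Your proof is correct and follows essentially the same route as the paper: trivialize the Hopf fibration away from the codimension-two sphere $\Sigma=\pi^{-1}(\mathbb{C}P^{m-1})$, pull back $\alpha_0$ to $ds+\lambda$ on $\mathbb{R}/\pi\mathbb{Z}\times\mathbb{B}$, splice in the plug from Proposition~\ref{the_plug} with $L=\pi$, then invoke Gray stability via property~\ref{p4}. The only cosmetic difference is that you separate the roles of \ref{p2} (smooth extension across $\Sigma$) and \ref{p4} (isotopy for Gray stability) slightly more explicitly than the paper does.
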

\begin{proof}
Using the embedding 
$$\mathbb{C}^m\hookrightarrow\mathbb{C}P^m,\; z=(z_1,\dots,z_m)\mapsto (z_1:\ldots:z_m:1)$$
we trivialize the Hopf bundle as
$$\mathbb{R}/\pi\mathbb{Z}\times \mathbb{C}^n\rightarrow S^{2m+1},\; (s,z)
\mapsto \kappa(z)(e^{i2s}z,e^{i2s})\in \mathbb{C}^{m+1}$$
where $\kappa(z)=(1+||z||^2)^{-1/2}$. 
Via the rescaling $z\mapsto \frac{z}{\sqrt{1-||z||^2}}$, we define a diffeomorphism
\begin{eqnarray}\label{model}
f: \mathbb{R}/\pi\mathbb{Z}\times \mathbb{B}\rightarrow S^{2m+1},\;f(s,z)=(e^{i2s}z,e^{i2s}\sqrt{1-||z||^2}).
\end{eqnarray}
where as before, $\mathbb{B}$ is the open unit ball in $\mathbb{R}^{2m}$. On $\mathbb{B}$, we fix the primitive 
$$\lambda=\frac{1}{2}\sum_{i=1}^m (x_idy_i-y_idx_i).$$
A straightforward computation shows  that
 $$f^*\alpha_0=ds+\lambda.$$
We note that $S^{2m+1}\setminus\, {\rm im}(f)=\pi^{-1}(\mathbb{C}P^{m-1})$ where 
 $$\mathbb{C}P^{m-1}\hookrightarrow \mathbb{C}P^{m},\; (z_0:\ldots:z_{m-1})\mapsto (z_0:\ldots:z_{m-1}:0).$$
That is, $f$ misses a subset of codimension two and hence of measure zero. 

Let $C>0$ be given. We take $\epsilon=\pi^{m+1}/C$ and $L=\pi$. By Proposition \ref{the_plug}, there is a contact form $\beta$ on $\mathbb{R}/\pi\mathbb{Z}\times \mathbb{B}$ satisfying \ref{p1}-\ref{p4}. By \ref{p4}, the push forward of $\beta$ by $f$, still denoted by $\beta$, extends to a smooth contact form on $S^{2m+1}$ such that the minimal period is $\pi$. We also note that 
$${\rm vol}(S^{2m+1},\beta)={\rm vol}(\mathbb{R}/\pi\mathbb{Z}\times \mathbb{B}^{2m},\beta)\leq \epsilon. $$
Hence we get 
$$\rho(S^{2m+1},\beta)\geq \frac{\pi^{m+1}}{\epsilon}=C.$$
We finally note that the contact form $\beta$ on $S^{m+1}$ is isotopic to $\alpha_0$. Hence $\ker \beta$ is isotopic to $\xi_{{\rm st}}$. By Gray's stability theorem, there is a diffeomorphism $\psi$ of $S^{2m+1}$ such that $\psi_*\xi_{{\rm st}}=\ker \beta$. Since systolic ratio is invariant under a diffeomorphism, $\alpha:=\psi^*\beta$ is the desired contact form. 
\end{proof}

\section{Contact forms with arbitrarily large systolic ratio}
In this section we prove that any co-orientable contact manifold $(V,\xi)$ admits a 
contact form with arbitrarly large systolic ratio. Our statement generalizes the main theorem of \cite{general} and our proof uses the  strategy of \cite{general} with necessary modifications. 
\subsection{The plug with arbitrary radius and primitive}
We first note that the plug construction given above, generalizes to the balls of arbitrary radius. In fact, given $r>0$ and $\varphi\in {\rm Diff}_c(\mathbb{B},\omega)$, we define a diffeomorphism 
$$\varphi_r:r\mathbb{B}\rightarrow r\mathbb{B},\;\varphi_r(z)=r\varphi(\frac{z}{r}),$$
which is compactly supported and symplectic. The definitions (\ref{action_general}) and (\ref{calabi_general}) make sense for $\varphi_r$ and a straightforward computation shows that  
\begin{equation} \label{action_r}
\sigma_{\varphi_r}=r^2\sigma_\varphi(\frac{z}{r})
\end{equation}
and 
\begin{equation}\label{calabi_r}
{\rm CAL}(\varphi_r)=r^{2m+2}{\rm CAL}(\varphi).
\end{equation}
We recover Proposition \ref{the_plug} for the ball of arbitrary radius as follows. Given $L>0$ and $\epsilon>0$, by Lemma \ref{ball_map}, one gets an isotopy $\{\varphi^t\}\subset {\rm Diff}_c(\mathbb{B},\omega)$ such that 
\begin{equation*}
\sigma_{\varphi^t,\lambda}\geq -\frac{L}{r^2}+\frac{L}{r^2 n}\;\; \forall t\in [0,1].
\end{equation*}
and 
\begin{equation*}
{\rm CAL}(\varphi^1)+\frac{L}{r^2}\pi^m<\frac{\epsilon}{r^{2m+2}}.
\end{equation*}
Combining these statements with (\ref{action_r}) and (\ref{calabi_r}), we get  
\begin{eqnarray}\label{b1_r}
\sigma_{\varphi_r^t,\lambda}= r^2 \sigma_{\varphi^t,\lambda}\geq -r^2\frac{L}{r^2}+r^2\frac{L}{r^2 n}=-L+L/n
\end{eqnarray}
and 
\begin{equation}\label{b2_r}
{\rm CAL}(\varphi_r^1)+L\pi^m r^{2m}=r^{2m+2}{\rm CAL}(\varphi^1)+r^{2m+2}\frac{L}{r^2}\pi^m < r^{2m+2}\frac{\epsilon}{r^{2m+2}}=\epsilon.
\end{equation}
Finally we note that repeating the construction in Lemma \ref{contact_mapping_torus} with the isotopy $\{\varphi_r^t\}\subset{\rm  Diff}_c(r\mathbb{B},\omega)$, we get a contact form $\beta$ on $\mathbb{R}/L\mathbb{Z}\times r\mathbb{B}$ so that the property \ref{a1} is replaced by
\begin{equation}\label{a1_r}
{\rm vol}(\mathbb{R}/L\mathbb{Z}\times r\mathbb{B},\beta)=L\pi^mr^{2m}+ {\rm CAL}(\varphi_r^1).
\end{equation}
Hence combining (\ref{b2_r}) with (\ref{a1_r}) yields 
$${\rm vol}(\mathbb{R}/L\mathbb{Z}\times r\mathbb{B},\beta)\leq \epsilon.$$
We note that the statements  \ref{b3} and \ref{b4} hold for the action $\sigma_{\varphi_r^1}$. Hence the proof of Proposition \ref{the_plug} goes through.

The final modification of the plug construction is replacing the standard primitive  $\lambda$ with an arbitrary primitive. After the observations made above, the proof of the following statement is a straightforward generalization of Proposition 2 in \cite{general}. 
\begin{lem}\label{plug_any_primitive} Let $r,\epsilon>0$ be given and let $\lambda'$ be any primitive of the standard symplectic 
form $\omega$ on the open disc $r\mathbb{B}$. Then there exists a smooth contact form $\beta'$ on 
$r\mathbb{B}\times \mathbb{R}/\mathbb{Z}$ such that the followings hold.
\begin{enumerate}[label=(p'\arabic*)]
 \item \label{p'1}$\beta'=\lambda'+ds$ in a neighborhood of  $\partial (r\mathbb{B}\times \mathbb{R}/\mathbb{Z}$).
 \item \label{p'2}$\beta'$ is smoothly isotopic to the contact form $\lambda'+ds$ on 
 $r\mathbb{B}\times \mathbb{R}/\mathbb{Z}$ through a path of contact forms which agree with $\lambda'+ds$
 in a neighborhood of $\partial(r\mathbb{B}\times \mathbb{R}/\mathbb{Z})$.
 \item \label{p'3} All the closed orbits of $R_{\beta'}$ have period at least 1.
 \item ${\rm vol}(r\mathbb{B}\times \mathbb{R}/\mathbb{Z},\beta')<\epsilon$.
\end{enumerate}
\end{lem}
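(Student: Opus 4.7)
The plan is to reduce the statement to the radius-$r$ plug construction established in the discussion preceding the lemma, by conjugating with a gauge-type diffeomorphism that intertwines the two contact forms $ds + \lambda$ and $ds + \lambda'$ on $r\mathbb{B} \times \mathbb{R}/\mathbb{Z}$.

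First I would exploit that the open ball $r\mathbb{B}$ is contractible, so every closed $1$-form on it is exact. Since $\lambda$ and $\lambda'$ both primitivize $\omega$, their difference $\lambda' - \lambda$ is closed, hence equals $dF$ for some smooth function $F : r\mathbb{B} \to \mathbb{R}$. Next I would apply the plug construction as extended to arbitrary radius (with parameters $L = 1$ and tolerance $\epsilon$), obtaining a contact form $\beta$ on $r\mathbb{B} \times \mathbb{R}/\mathbb{Z}$ satisfying the analogues of (p1)--(p4) of Proposition \ref{the_plug} relative to the standard primitive $\lambda$.

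Then I would introduce the fiberwise shift $\Psi : r\mathbb{B} \times \mathbb{R}/\mathbb{Z} \to r\mathbb{B} \times \mathbb{R}/\mathbb{Z}$, $\Psi(z, s) = (z, s + F(z))$, which is isotopic to the identity through maps of the same form (via $tF$ for $t\in[0,1]$) and preserves the near-boundary region setwise since the $z$-coordinate is untouched. A direct computation gives $\Psi^*(ds + \lambda) = ds + dF + \lambda = ds + \lambda'$. Setting $\beta' := \Psi^*\beta$ then delivers all four conclusions: (p'1) holds near the boundary because $\beta = ds + \lambda$ there; (p'2) follows by pulling back the isotopy of $\beta$ to $ds + \lambda$ via $\Psi^*$ (the boundary condition is preserved because $\Psi^*$ sends $ds+\lambda$ to $ds+\lambda'$); (p'3) holds because a diffeomorphism carries Reeb orbits of $\beta'$ bijectively onto Reeb orbits of $\beta$ preserving periods; and the final volume bound follows from diffeomorphism invariance of the contact volume (with orientation preserved since $\Psi$ is isotopic to the identity). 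No substantial obstacle arises; the crux is simply that on a simply connected base any two primitives of $\omega$ differ by an exact form whose antiderivative can be absorbed as a shift in the $s$-coordinate of the mapping torus, a trick already used in the 3-dimensional case treated in \cite{general}.
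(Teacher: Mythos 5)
Your proof is correct, but it takes a genuinely different route from the paper's. The paper fixes a bump function $\chi$ on $r\mathbb{B}$ (equal to $0$ on a smaller ball $r'\mathbb{B}$ and to $1$ outside an intermediate ball $r''\mathbb{B}$), writes $\lambda'-\lambda=du$, and sets $\beta':=\beta+d(\chi u)$; it then checks by hand that the added exact form, being supported only in the boundary collar where $\beta=ds+\lambda$ and $R_\beta=\partial_s$ and having no $ds$-component, leaves the contact condition, the contact volume, the Reeb vector field, and the boundary-compatible isotopy unchanged. You instead conjugate by the fiberwise rotation $\Psi(z,s)=(z,s+F(z))$ with $dF=\lambda'-\lambda$, and set $\beta':=\Psi^*\beta$. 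This is a cleaner mechanism: since $\Psi^*(ds+\lambda)=ds+\lambda'$ and $\Psi$ preserves the $z$-coordinate (hence the boundary collar) and is orientation preserving, every desired property --- (p'1) through (p'4) --- falls out of naturality under diffeomorphism, with no computation to verify; the paper, by contrast, needs a short separate argument for each of the contact condition, the volume, the Reeb field, and the isotopy. What the paper's approach buys in exchange is that $R_{\beta'}=R_\beta$ literally (not just up to conjugation by $\Psi$), and that $\beta'$ agrees with $\beta$ on the inner region $r'\mathbb{B}\times\mathbb{R}/\mathbb{Z}$; neither feature is needed for the plug-insertion argument in Theorem \ref{bigmanifolds}, so your version serves equally well. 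Both routes ultimately rest on the same elementary fact that two primitives of $\omega$ on a ball differ by an exact $1$-form.
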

\begin{proof} From the observations made above, we know that such a contact form $\beta$ exists when we replace $\lambda'$
 with the standard primitive $\lambda$ and take $L=1$. 
 
The 1-form $\lambda'-\lambda$ is closed on $r\mathbb{B}$ and hence exact. Let $u$ be a smooth function on
 $r\mathbb{B}$ so that $\lambda'-\lambda=du$. 
 
 Let $\beta_t$ be a smooth path of contact forms on $r\mathbb{B}\times \mathbb{R}/\mathbb{Z}$ such that 
 $\beta_0=\beta$, $\beta_1=\lambda+ds$ and $\beta_t=\lambda+ds$ on 
 $(r\mathbb{B}\setminus r'\mathbb{B})\times \mathbb{R}/\mathbb{Z}$ for all $t$ where $r'<r$. Let 
 $\chi$ be a smooth function on $r\mathbb{B}$ such that $\chi=0$ on $r'\mathbb{B}$ and 
 $\chi=1$ on $r\mathbb{B}\setminus r''\mathbb{B}$ where $r'<r''<r$. We define the contact form
 $$\beta':=\beta+d(\chi u).$$
 In fact, 
 $$\beta'\wedge (d\beta')^n=\beta'\wedge (d\beta)^n=\beta\wedge (d\beta)^n+d(\chi u)\wedge (d\beta)^n
 =\beta\wedge (d\beta)^n$$
 since on the region where $d(\chi u)$ is supported, $d\beta=d\lambda$ and $\partial_s$ is in the kernel of  
 both $d(\chi u)$ and $d\beta$. In particular, 
 $${\rm vol}(r\mathbb{B}\times \mathbb{R}/\mathbb{Z},\beta')={\rm vol}(r\mathbb{B}\times \mathbb{R}/\mathbb{Z},\beta)<\epsilon.$$
 We also note that on the region $(r''\mathbb{B}\setminus r'\mathbb{B})\times \mathbb{R}/\mathbb{Z}$, $R_\beta=\partial_s$ and hence
 $$\beta'(R_\beta)=\beta(R_\beta)+d(\chi u)(\partial_s)=1.$$
 Therefore, $\beta'(R_\beta)=1$  and $\imath_{R_\beta}d\beta'=0$ on $r\mathbb{B}\times \mathbb{R}/\mathbb{Z}$, that 
 is $R_{\beta'}=R_\beta$ and the third property is satisfied. 
 We note that on the region $(r\mathbb{B}\setminus r''\mathbb{B})\times \mathbb{R}/\mathbb{Z}$, 
 $$\beta'=\beta+du=\lambda'+ds$$
 and hence first property is satisfied. 
 We finally define
 $$\tilde{\beta}^t:=\beta^t+d(\chi u).$$
 We note that on the region $(r''\mathbb{B}\setminus r'\mathbb{B})\times \mathbb{R}/\mathbb{Z}$, 
 $$\tilde{\beta}^t=\lambda+ds\Rightarrow d\tilde{\beta}^t=d\lambda$$ 
 and therefore $\tilde{\beta}^t$ is a smooth homotopy of contact forms so that 
 $$\tilde{\beta}^0=\beta^0+d(\chi u)=\beta+d(\chi u)=\beta'$$
and 
$$\tilde{\beta}^1=\beta^1+d(\chi u)=\lambda+ds+d(\chi u)$$
and on the region $(r\mathbb{B}\setminus r''\mathbb{B})\times \mathbb{R}/\mathbb{Z}$, $\tilde{\beta}^t=\lambda+ds$ for all $t$. Next we consider the smooth 
isotopy 
$$\lambda+ds+tdu+(1-t)d(\chi u)$$
of contact forms that connects $\lambda+ds+d(\chi u)$ to $\lambda'+ds$, which clearly coincides with 
$\lambda'+ds$ on the region $(r\mathbb{B}\setminus r''\mathbb{B})\times \mathbb{R}/\mathbb{Z}$ for all $t$. Hence the second property follows.
\end{proof} 
\subsection{Construction of a special contact form}
The next step is to construct a closed contact form on a given contact manifold $(V,\xi)$ such that the Reeb flow gives rise to a circle bundle structure on a "large" portion of $V$. Such a contact form may be modified by inserting plugs so that most of the contact volume is sucked up without decreasing the minimal period. To this end, we first summarize the definitions and results concerning the Giroux's correspondence between the contact structures and supported open books in higher dimensions. For the details, we refer to \cite{openbook} and \cite{ILD}.

Let $F$ be a $2n$-dimensional domain with boundary $K$ and let $F^o$ denote the interior of $F$. A symplectic form $\omega\in \Omega^2(F^o)$ is called an \emph{ideal Liouville structure}, abbreviated by ILS,  on $F$ if it admits a primitive $\lambda \in \Omega^1(F^o)$ such that for some/any smooth function 
\begin{equation}\label{prop_of_u}
u:F\rightarrow [0,+\infty),\;\; {\rm where}\;\; K=u^{-1}(0)\;\; \textrm {is a regular level set,} 
\end{equation}
the 1-form $u\lambda$ on $F^o$ extends to a smooth 1-form $\beta$ on $F$, which is a contact form along $K$. 

If such a 2-form $\omega$ exists, then the pair $(F,\omega)$ is called an\emph{ ideal Liouville domain}, abbriviated as ILD, and any primitive $\lambda$ of above property is called an \emph{ideal Liouville form}, abbriviated as ILF. It turns out that given an ILD $(F,\omega)$, the contact structure 
$$\xi:=\ker (\beta|_{TK})$$
depends on the 2-form $\omega$ but not on $\lambda$ or $u$, see Proposition 2 in \cite{ILD}. Moreover, once $\lambda$ is chosen, one can recover all possible (positive) contact forms on $(K,\xi)$ by restriction of the extension of $u\lambda$ to $K$ as $u$ moves among the functions with the property (\ref{prop_of_u}). Hence the pair $(K,\xi)$ is called the \emph{ideal contact bounday} of $(F,\omega)$. We note that the orientation of $K$ that is determined by the co-oriented contact structure $\xi$ coincides with the orientation of $K$ as the boundary of $(F,\omega)$. 

A very useful feature of an ILD is that the vicinity of its bounday admits an explicit parametrization by means of which any ILF has a very nice form.  
\begin{lem}\label{nearKlemma} Let $(F,\omega)$ be an ILD and $\lambda$ be an ILF. Let $u$ be a function satisfying (\ref{prop_of_u}) and $\beta$ be the extension of $u\lambda$. Then for any  contact form $\alpha_0$ on $(K,\xi)$, there exist a constant $R>0$ and an embedding 
\begin{eqnarray*}
\imath:[0,R]\times K \rightarrow F
\end{eqnarray*}
such that 
$$\imath^*\lambda=\frac{1}{r}\alpha_0\: \textrm{ and } \:\imath(0,q)=q\: \textrm{ for all } \:q\in K,$$ 
where $r\in [0,R]$. In paricular, 
$$\imath^*\beta=\frac{u\circ\imath}{r}\alpha_0 \:\textrm{ on }\:F^o$$ and for all $q\in K$
$$(\beta_{|TK})(q)=\left(\frac{\partial (\imath\circ u)}{\partial r}(0,q)\right)\alpha_0.$$
\end{lem}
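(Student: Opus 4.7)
The plan is to build the collar $\imath$ as a reparametrization of the inward Liouville flow emanating from a suitably chosen hypersurface in $F^o$. On $F^o$ let $X$ be the Liouville vector field of $\lambda$, defined by $\imath_X\omega=\lambda$; since $\lambda(X)=\omega(X,X)=0$, Cartan's formula gives $\mathcal{L}_X\lambda=\lambda$, and hence $(\phi_X^t)^*\lambda=e^t\lambda$ for its flow $\phi_X^t$.

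The first step is to verify that $X$ extends to a smooth vector field on a neighborhood of $K$ in $F$, vanishing identically on $K$. To see this, I would rewrite the defining equation using $\beta=u\lambda$ and the smooth auxiliary $2$-form $\widetilde\omega:=u^2\omega=u\,d\beta-du\wedge\beta$: on $F^o$ the equation $\imath_X\omega=\lambda$ is equivalent to $\imath_X\widetilde\omega=u\beta$, and both sides of the latter extend smoothly to $F$. Along $K$ one has $\widetilde\omega|_K=-du\wedge\alpha_0$, whose kernel is exactly $\xi$, while $u\beta|_K=0$; a direct local computation in coordinates adapted to $u$, using that $\beta|_{TK}$ is a contact form, shows that this equation admits a unique smooth solution $X$ near $K$ with $X|_K=0$. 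As a sanity check, in the model $\bigl((0,R]\times K,\ \lambda=\alpha_0/r\bigr)$ one finds $X=-r\partial_r$. Granted this extension, the flow $\phi_X^t$ is defined for all $t\ge0$ near $K$, fixes $K$ pointwise, and each nearby orbit contracts exponentially to a point of $K$; the asymptotic projection $\pi_\infty(\sigma):=\lim_{t\to\infty}\phi_X^t(\sigma)$ is a smooth submersion onto $K$ whose restriction to any hypersurface $\Sigma_0\subset F^o$ close to $K$ and transverse to $X$ is a diffeomorphism.

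Pick any such $\Sigma_0$ and set $\alpha_1:=((\pi_\infty|_{\Sigma_0})^{-1})^*(\lambda|_{\Sigma_0})$, a contact form for $\xi$ on $K$. Write $\alpha_0=h\alpha_1$ for a positive function $h:K\to\mathbb{R}$, and deform $\Sigma_0$ along $X$ by setting
\[
\Sigma:=\bigl\{\phi_X^{\log h(\pi_\infty(\sigma))}(\sigma):\sigma\in\Sigma_0\bigr\}.
\]
Because $\pi_\infty$ is $\phi_X$-invariant, $\pi_\infty|_\Sigma:\Sigma\to K$ is still a diffeomorphism; and because $(\phi_X^t)^*\lambda=e^t\lambda$, the pullback of $\lambda|_\Sigma$ through $\sigma:=(\pi_\infty|_\Sigma)^{-1}$ satisfies $\sigma^*\lambda=h\alpha_1=\alpha_0$. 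Now define
\[
\imath(r,q):=\begin{cases}\phi_X^{-\log r}(\sigma(q)),&r\in(0,R],\\ q,&r=0,\end{cases}
\]
for $R\le 1$ chosen so the flow is defined on the image. Continuity at $r=0$ is $\lim_{r\to0}\phi_X^{-\log r}(\sigma(q))=\pi_\infty(\sigma(q))=q$, and smoothness across $r=0$ follows from the smooth extension of $X$ together with the exponential rate of attraction. For $r>0$, $\lambda(X)=0$ shows that $\imath^*\lambda$ has no $dr$-component, while for $v\in T_qK$ the Liouville scaling gives $(\imath^*\lambda)(v)=\tfrac{1}{r}\lambda(\sigma_*v)=\tfrac{1}{r}\alpha_0(v)$, so $\imath^*\lambda=\alpha_0/r$. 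The last two assertions are immediate: $\imath^*\beta=\imath^*(u\lambda)=(u\circ\imath/r)\alpha_0$, and since $(u\circ\imath)(0,q)=u(q)=0$ one reads off $(\beta|_{TK})(q)=\lim_{r\to0}((u\circ\imath)(r,q)/r)\alpha_0=\bigl(\partial(u\circ\imath)/\partial r\bigr)(0,q)\,\alpha_0$.

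The main obstacle is the smooth extension of $X$ across $K$: both $\lambda$ and $\omega$ diverge there, and only the cancellation encoded by the ILD condition (equivalently by the smoothness of $\beta=u\lambda$) prevents $X$ from blowing up. The cleanest way to organize this step appears to be via the smooth closed $2$-form $\widetilde\omega=u^2\omega$, its rank-$2$ degeneracy along $K$ with kernel $\xi$, and the uniqueness of the smooth solution to $\imath_X\widetilde\omega=u\beta$ subject to $X|_K=0$. Once this is in hand, the choice of $\Sigma$, the definition of $\imath$, and the identity $\imath^*\lambda=\alpha_0/r$ are all formal consequences of the Liouville scaling $(\phi_X^t)^*\lambda=e^t\lambda$.
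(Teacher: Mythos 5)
Your pull-back computations at the end are correct once the embedding is in hand, and the identity $(\phi_X^t)^*\lambda = e^t\lambda$ is indeed the engine behind $\imath^*\lambda=\alpha_0/r$. But your route is genuinely different from the paper's, and it has two gaps whose resolution is precisely what the paper's construction is designed to sidestep.

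You work directly with the Liouville vector field of $\lambda$ (the paper's $Y$), which vanishes along $K$. The paper instead constructs a rescaled field $X$ with $Y=uX$ on $F^o$ by solving $\iota_X\mu = n\beta\wedge(d\beta)^{n-1}$ against the volume form $\mu:=(ud\beta+n\beta\wedge du)\wedge(d\beta)^{n-1}$, which is smooth, positive, and nondegenerate \emph{across} $K$. This forces $X$ to be smooth and, via $\iota_X du=-1$ on $K$, transverse to $K$, so the backward flow of $X$ gives an honest smooth collar immediately; the Liouville field $Y=uX$ is then automatically a smooth extension vanishing on $K$, with no separate argument needed. Your proposal extends $Y$ via the equation $\iota_X\widetilde\omega=u\beta$ with $\widetilde\omega=u^2\omega$, but along $K$ the form $\widetilde\omega$ degenerates to $-du\wedge\beta_0$ with kernel $\xi$ of corank $2$, so the linear system you are solving is far from nondegenerate; showing that the unique $F^o$-solution extends smoothly to $F$ and vanishes on $K$ is exactly the ``direct local computation'' you defer, and it encodes the same cancellation that the paper's choice of $\mu$ makes manifest. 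The observation $\iota_X\widetilde\omega=u\beta$ is nice but does not by itself yield the extension.

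The more serious gap is the smoothness of $\imath$ at $r=0$. You compose the singular reparametrization $r\mapsto-\log r$ with the infinite-time flow of a vector field that merely vanishes along $K$; the claim that the result is $C^\infty$ across $r=0$ is a normal-form (or smooth stable-fibration) statement, not a consequence of ``exponential rate of attraction'', which only gives continuity. It is true here — the normal linearization of $Y$ along $K$ has the single eigenvalue $-1$, reflected in $\iota_X du=-1$ — but it needs a proof, and the same applies to smoothness of your asymptotic projection $\pi_\infty$. The paper avoids invoking any such theorem: since its $X$ is transverse to $K$, the map $\Phi(t,q)=\varphi^t(q)$ is already a smooth embedding of $(-\infty,0]\times K$, and the only remaining regularity is that the implicitly defined $f(r,q)$ with $\Lambda(f(r,q),q)=1/r$ is smooth across $r=0$, which reduces to a one-variable asymptotic of $\Lambda$ driven by $\hat u_t(0,q)=-1$. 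In short, your version of Lemma~\ref{nearKlemma} is morally sound and more conceptual, but it trades a soft linear-algebra step for a genuine dynamical-systems theorem, and that trade has to be paid for.
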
 
\begin{proof} The above statement is a reformulation of Proposition 3 in \cite{ILD}. Here we give a proof that is essentially the same as the proof of that proposition but it is more explicit. 

Let $\dim F=2n$. As simple computation gives
$$\omega^n=\left(d(\beta/u)\right)^n=u^{-n-1}(ud\beta+n\beta\wedge du)\wedge (d\beta)^{n-1}.$$
We put 
$$\mu:=(ud\beta+n\beta\wedge du)\wedge (d\beta)^{n-1}$$
and note that by definition, $\mu$ is a smooth positive volume form on $F^o$. Since $u=0$ on $K$, 
$$\mu=-ndu\wedge \beta \wedge (d\beta)^{n-1}\;\;{\rm on}\;\;K$$
and it is a positive volume form since $\beta$ is a positive contact form along $K$ and $u$ satisfies (\ref{prop_of_u}). We define  the vector field $X$ on $F$ via 
$$\iota_X\mu=n\beta \wedge (d\beta)^{n-1}.$$
Evaluating the above identity at any point on $K$ shows that $\iota_Xdu=-1$ on $K$, that is, $X$ is non-singular on $K$ and points transversely outwards. On the other hand, writing $\beta=u\lambda$ on $F^0$ yields
$$\iota_X\mu=n\beta \wedge (d\beta)^{n-1}=nu^n\lambda \wedge (d\lambda)^{n-1}=u^n\iota_Y\omega^n=u^{-1}\iota_Y\mu,$$
where $Y$ is the Liouville vector field of $\lambda$ on $F^o$. In particular, $Y=uX$ on $F^o$. We note that the 1-form $\beta$ is not in general a Liouville form but here, one should think of the vector 
field $X$ as the "Liouville vector field of $\beta$". In fact, 
$$L_X\beta=\rho \beta$$
for some smooth function $\rho:F\rightarrow \mathbb{R}$. To see this, we note that on $F^o$,
$$L_X\beta=\iota_X d\beta+d(\iota_X\beta)=\frac{1}{u}\iota_Y(du\wedge\lambda+ud\lambda)=\frac{1}{u}\iota_Y du+\lambda=\frac{1}{u}(\iota_X du+1)\beta.$$ 
We claim that the fuction 
$$\rho:=\frac{1}{u}(\iota_X du+1)$$
extends to a smooth function on $F$. Let $\beta_0$ be the contact form, given by the restriction of $\beta$ to $TK$ and let $R_0$ be the Reeb vector field on $K$ associated to  $\beta_0$. Near $K$, we define  a smooth vector field $R$ via
$$R(p):=(\varphi^t_*R_0)(p);\; \varphi^t(q)=p,\; t\leq 0$$
where $\varphi^t$ is the flow of $X$. Since $R=R_0$ on $K$, $\beta(R)=1$ on $K$ and therefore on a compact neighbourhood $V$ of $K$, we have $\beta(R)>0$. On $V\setminus K$,
$$\rho=\frac{(L_X\beta) (R)}{\beta(R)}$$
and the right hand side is a smooth function on $V$. Hence $\rho$ extends to a smooth function on $V$ and $L_X\beta=\rho\beta$ on $F$.
In particular, the flow $\varphi^t$ preserves the kernel of $\beta$. Integrating the equation 
$$\frac{d}{dt}(\varphi^t)^*\beta=(\varphi^t)^*(L_X\beta)=(\rho\circ \varphi^t)(\varphi^t)^*\beta,$$
we get
$$(\varphi^t)^*\beta=\mu_t\beta,\; \mu_t=\exp \left(\int_0^t\rho\circ \varphi^s\,ds\right).$$
We also note that on $F^0$, 
$$\beta(X)=u\lambda(X)=\lambda(Y)=0.$$
Hence $\beta(X)=0$ on $F$. 
Now we consider the smooth embedding 
$$\Phi:(-\infty,0]\times K\rightarrow F,\;(t,q)\mapsto \varphi^t(q),$$
where $\varphi^t$ is the flow of $X$. By definition, we have $\Phi^*X=\partial_t$. We put $\hat{\beta}:=\Phi^*\beta$, $\hat{u}:=\Phi^*u$ and $\hat{\rho}:=\Phi^*\rho$. The above discussion says that $\hat{\beta}$ has no $dt$ component. Moreover, it has the form 
$$\hat{\beta}(t,q)=\exp\left(\int_0^t\hat{\rho}(s,q)\,ds\right)\beta_0(q),$$
where 
$$\hat{\rho}(t,q)=\frac{\hat{u}_t(t,q)+1}{\hat{u}(t,q)}.$$
Now let $\alpha_0$ be a positive contact form on $K$. Then there is some positive function $\kappa$ on  $K$ so that $\beta_0=\kappa\alpha_0$. We put $\hat{\lambda}:=\Phi^*\lambda$ and get  
$$\hat{\lambda}(t,q)=\Lambda(t,q)\alpha_0(q),$$
where $\Lambda: (-\infty,0)\times K\rightarrow (0,+\infty)$ is the smooth function given by
$$\Lambda(t,q):=\frac{1}{\hat{u}(t,q)}\kappa(q) \exp\left(\int_0^t\hat{\rho}(s,q)\,ds\right).$$
We note that 
$$\frac{\partial \Lambda}{\partial t}
=\kappa \exp\left(\int_0^t\hat{\rho}(s,\cdot)\,ds\right)\frac{\hat{\rho}\hat{u}-\hat{u}_t}{\hat{u}^2}=\frac{\Lambda}{\hat{u}}$$
and get 
$$\Lambda(t,q)=\Lambda(-1,q)\exp\left(\int_{-1}^t\frac{1}{\hat{u}(s,q)}\,ds\right).$$
We note that for each $q\in K$, the image of the map $\Lambda(\cdot,q)$ is an interval of the form $(c,\infty)$  since  $\hat{u}_t(0,q)=-1$. Since $K$ is compact, there exists some $R>0$ such that for all $q\in K$ there is some $c_q<0$ such that $\Lambda(\cdot, q)$ is a diffeomorphism between $[c_q,0)$ and $[1/R,+\infty)$ with non-vanishing derivative. We define a function 
$$f:(0,R]\times K\rightarrow (-\infty,0)$$
by imposing 
$$\Lambda(f(r,q),q)=\frac{1}{r}.$$ 
It is clear that $f$ is also smooth along $K$. Moreover it extends to a smooth function on $[0,R]\times K$ so that $f(0,q)=0$ for all $q$. We define a smooth embedding
$$F:[0,R]\times K\rightarrow (-\infty,0]\times K,\; F(r,q)=(f(r,q),q).$$
Then by definition 
$$F^*\hat{\lambda}=\frac{1}{r}\alpha_0\;\;{\rm and }\;\; F(0,q)=(0,q)$$
for all $q\in K$. Then $\imath:=\Phi\circ F$ is the desired embedding. The rest of the claim follows immediately form the equation $\imath^*\lambda=(1/r)\alpha$.
\end{proof}
From the above lemma, it follows that given two ILF's $\lambda_1$ and $\lambda_2$ on a an ILD $(F,\omega)$, there are two neighbourhoods $U_1$ and $U_2$ of $K:=\partial F$ such that there is an exact symplectomorphism between $(U_1\setminus K ,\lambda_1)$ and  
$(U_2\setminus K ,\lambda_2)$, which extends to $K$ as identity. Using this fact and the standard Moser argument, one shows the following.
\begin{prop}\label{cor5}(Corollary 5 in \cite{ILD})
Let $(F,\omega)$ be an ILD and $(\lambda_t)_{t\in [0,1]}$ 
be a path of ILF's on F, which is smooth in the sense that there exists a smooth path of 1-forms $(\beta_t)_{t\in [0,1]}$ on $F$ and some function $u$ satisfying (\ref{prop_of_u}) such that $\lambda_t=\beta_t/u$ for all $t\in [0,1]$. Then there is a symplectic
isotopy  $(\psi_t)_{t\in [0,1]}$ of F, relative to the boundary, such that  $\psi_0 ={\rm id}$ and, for
every $t\in [0, 1]$ the form 
$\psi^*\lambda_t-\lambda_0$ is the differential of a function with compact
support in $F^o$.
\end{prop}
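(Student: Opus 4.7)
The plan is to establish the proposition in two stages: first rigidify the family $(\lambda_t)$ near the boundary $K$ by a symplectic isotopy, then apply Moser's trick in the interior where the remaining defect is compactly supported.

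For the rigidification, I apply Lemma \ref{nearKlemma} in a parameter-dependent manner. Fix an auxiliary contact form $\alpha$ on $K$. The proof of that lemma constructs the collar embedding by flowing a vector field built from $\beta=u\lambda$, and every intermediate step (the vector field $X$, its flow, the function $\Lambda$, the reparametrization) depends smoothly on the underlying data; applied to the smooth family $\beta_t=u\lambda_t$ it produces a smooth family of embeddings $\imath_t\colon[0,R]\times K\hookrightarrow F$ with $\imath_t^*\lambda_t=\alpha/r$ and $\imath_t(0,q)=q$. Setting $\chi_t:=\imath_t\circ\imath_0^{-1}$ yields a smooth family of exact symplectomorphisms of a collar $U$ of $K$ with $\chi_0=\mathrm{id}$, $\chi_t|_K=\mathrm{id}$, and $\chi_t^*\lambda_t=\lambda_0$. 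I then extend $\chi_t$ to a global symplectic isotopy $\phi_t$ of $F$ relative to $K$, agreeing with $\chi_t$ on a smaller collar $U_1\subset U$.

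Assuming the extension $\phi_t$, put $\tilde\lambda_t:=\phi_t^*\lambda_t$. On $U_1$ we have $\tilde\lambda_t=\lambda_0$, so $\dot{\tilde\lambda}_t$ is a smooth closed $1$-form on $F$ vanishing on $U_1$ and hence of compact support in $F^o$. Define $X_t$ on $F$ by $\iota_{X_t}\omega=-\dot{\tilde\lambda}_t$; it is smooth, compactly supported in $F^o$, and symplectic because $\dot{\tilde\lambda}_t$ is closed. Let $\Psi_t$ be its flow, a symplectic isotopy compactly supported in $F^o$. A direct Moser computation yields
\[
\tfrac{d}{dt}\,\Psi_t^*\tilde\lambda_t = \Psi_t^*\bigl(\dot{\tilde\lambda}_t+L_{X_t}\tilde\lambda_t\bigr) = d\bigl(\tilde\lambda_t(X_t)\circ\Psi_t\bigr),
\]
so that $\Psi_t^*\tilde\lambda_t-\lambda_0=df_t$ with $f_t:=\int_0^t\tilde\lambda_s(X_s)\circ\Psi_s\,ds$. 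Since $X_s$ is compactly supported in $F\setminus U_1$ and $\Psi_s$ fixes $U_1$ pointwise, $f_t$ has compact support in $F^o$. Setting $\psi_t:=\phi_t\circ\Psi_t$ completes the argument.

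The main obstacle is the extension from $\chi_t$ to $\phi_t$. The generating vector field $Y_t$ of $\chi_t$ is a symplectic vector field on $U$ vanishing on $K$, and the function $h_t:=-\iota_{Y_t}\lambda_t$ is smooth across $K$ because the first-order vanishing of $Y_t$ cancels the blow-up of $\lambda_t$. When $Y_t$ is Hamiltonian on $U$ with Hamiltonian $h_t$, cutting off $h_t$ smoothly to $F$ produces the required global extension. In general $[\iota_{Y_t}\omega]\in H^1(U)\cong H^1(K)$ may be nontrivial, and one handles the extension either by a cohomological correction of the closed $1$-form $\iota_{Y_t}\omega$ to a closed $1$-form on $F$, or by invoking a parameter-dependent symplectic isotopy extension theorem applied to the compactly supported near-boundary isotopy. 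Once this step is in hand, everything else is routine.
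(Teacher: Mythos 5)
The paper itself gives no full proof of this proposition, only the one-sentence sketch preceding it (collar normalization via Lemma \ref{nearKlemma}, then ``the standard Moser argument'') together with a citation to Giroux. Your plan follows that sketch, and the parameter-dependent application of Lemma \ref{nearKlemma} and the Moser computation in your second stage are both sound. The gap is precisely where you flag it: the extension of $\chi_t$ to a global symplectic isotopy $\phi_t$ is not established. Your first route only addresses the special case where $Y_t$ is Hamiltonian with Hamiltonian $h_t=-\iota_{Y_t}\lambda_t$, which forces $\dot\lambda_t=0$ on the collar; indeed, differentiating $\chi_t^*\lambda_t=\lambda_0$ gives $L_{Y_t}\lambda_t=-\dot\lambda_t$, hence $\iota_{Y_t}\omega = dh_t-\dot\lambda_t$, so $Y_t$ is not Hamiltonian for $h_t$ in general. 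Your second route, an unconditional parameter-dependent symplectic isotopy extension theorem, does not exist: extending a symplectic isotopy of a collar is obstructed by flux, and that obstruction must be killed by hand.

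The missing observation that closes the gap is already in your own formula: since $\iota_{Y_t}\omega = -\dot\lambda_t - d(\iota_{Y_t}\lambda_t)$ on the collar and $\dot\lambda_t$ is a closed $1$-form defined on \emph{all} of $F^o$ (being $\dot\beta_t/u$ with $\beta_t$ and $u$ globally given), the restriction class $[\iota_{Y_t}\omega]\in H^1(U)$ automatically lies in the image of $H^1(F^o)\to H^1(U)$. Concretely, choose a cutoff $\rho$ equal to $1$ near $K$ and supported in the collar, and let $X_t$ be the vector field on $F^o$ determined by
\[
\iota_{X_t}\omega \;=\; -\,\dot\lambda_t \;-\; d\bigl(\rho\,\iota_{Y_t}\lambda_t\bigr).
\]
Near $K$ this equals $\iota_{Y_t}\omega$, so $X_t=Y_t$ there, and since $Y_t$ is a smooth vector field vanishing on $K$, $X_t$ extends smoothly over $K$ and is tangent to it; away from $K$, $\dot\lambda_t$ is smooth so $X_t$ is too. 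Then
\[
\dot\lambda_t + L_{X_t}\lambda_t \;=\; \dot\lambda_t + \iota_{X_t}\omega + d\iota_{X_t}\lambda_t \;=\; d\bigl(\iota_{X_t}\lambda_t - \rho\,\iota_{Y_t}\lambda_t\bigr),
\]
and the function in parentheses vanishes near $K$, hence is compactly supported in $F^o$. In fact, this observation shows that the two-stage decomposition is unnecessary: the flow of this $X_t$ directly furnishes the isotopy $\psi_t$, and $\psi_t^*\lambda_t-\lambda_0$ is the differential of the compactly supported function $\int_0^t\bigl(\iota_{X_s}\lambda_s-\rho\,\iota_{Y_s}\lambda_s\bigr)\circ\psi_s\,ds$. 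Incorporating this argument would turn your outline into a complete proof.
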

Given an ILD $(F,\omega)$, it is not hard to see that the set of ILF's is an affine space. Hence any two IDF's can be connected by a smooth path of ILF's. Another consequence of Lemma \ref{nearKlemma} is the following stability result.  
\begin{prop}\label{lem6}(Lemma 6 in \cite{ILD})
Let F be a domain and $(\omega_t)_{t \in [0, 1]}$  be a smooth path of ideal
Liouville structures on F, in the sense that there exists a smooth path of 1-forms $(\beta_t)_{t\in [0,1]}$ on $F$ and some function $u$ satisfying (\ref{prop_of_u}) such that $\omega_t=d(\beta_t/u)$. Then there exists an isotopy $(\phi_t)_{t\in [0,1]}$ of $F$ such
that $\phi_0 ={\rm id}$ and $\phi_t^*\omega_t=\omega_0$ for all $t \in [0, 1]$. Moreover, we can choose this
isotopy relative to $K = \partial F$ if and only if  all forms $\omega_t$ induce the
same boundary contact structure.
\end{prop}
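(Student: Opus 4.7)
The plan is to apply a Moser-type argument. Differentiating the relation $\omega_t = d(\beta_t/u)$ in $t$ gives $\dot{\omega}_t = d(\dot{\beta}_t/u)$, so $\eta_t := \dot{\beta}_t/u$ is a smooth primitive of $\dot{\omega}_t$ on $F^o$. Since $\omega_t$ is symplectic on $F^o$, I would define the time-dependent vector field $X_t$ on $F^o$ by $\iota_{X_t}\omega_t = -\eta_t$ and let $\phi_t$ be its flow. The standard Moser calculation then yields $\frac{d}{dt}\phi_t^*\omega_t = \phi_t^*(d\iota_{X_t}\omega_t + d\eta_t) = 0$, so $\phi_t^*\omega_t = \omega_0$ on the interval on which $\phi_t$ is defined on $F$.

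The hard part is showing that $X_t$ extends smoothly to all of $F$, despite the singularities of both $\omega_t$ and $\eta_t$ along $K$. Multiplying the Moser equation by $u^2$ produces the equivalent equation
\begin{equation*}
\iota_{X_t}\bigl(u\, d\beta_t - \beta_t \wedge du\bigr) = -u\, \dot{\beta}_t,
\end{equation*}
in which both sides now extend smoothly to $F$. Pulling back to $TK$ at $u=0$ (where $du|_{TK}=0$, since $K$ is a regular level set of $u$), this reduces to $du(X_t)\,\beta_t|_{TK} = 0$, and because $\beta_t|_{TK}$ is a contact form on $K$ and hence nowhere vanishing, it forces $du(X_t) = 0$; testing the full equation at $u=0$ similarly gives $\beta_t(X_t)=0$ on $K$. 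Thus $X_t|_K$ is automatically tangent to $K$ and in fact lies in the boundary contact distribution $\ker(\beta_t|_{TK})$. To upgrade this to smoothness of $X_t$ across $K$, I would employ the collar parametrization of Lemma \ref{nearKlemma} applied to $\lambda_0$, in which $\lambda_t$ takes the normal form $\alpha_t/r$ plus an $r$-smooth perturbation for a smooth family of contact forms $\alpha_t$ on $K$. Writing $X_t = f\,\partial_r + Y$ in these coordinates and decomposing the rescaled Moser equation into $dr$- and $K$-components gives an invertible linear system (after factoring out the expected vanishing order in $r$) whose solution $(f,Y)$ extends smoothly to $r=0$ with $f(0,\cdot)=0$ and $Y(0,\cdot) \in \xi_t$.

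With $X_t$ now smooth on $F$ and tangent to $K$, compactness of $F$ guarantees that $\phi_t$ is a genuine isotopy of $F$ with $\phi_0 = \mathrm{id}$, and the Moser identity gives $\phi_t^*\omega_t = \omega_0$ for all $t \in [0,1]$. For the relative statement: if $\phi_t$ fixes $K$ pointwise, then $\phi_t^*\omega_t = \omega_0$ on a neighbourhood of $K$ together with $\phi_t|_K = \mathrm{id}$ immediately implies that $\omega_t$ and $\omega_0$ induce the same boundary contact structure (which, by Proposition 2 of \cite{ILD}, depends only on $\omega$). Conversely, if all $\omega_t$ induce the same boundary contact structure $\xi_0$, the boundary trace of $X_t$ computed above sits in $\xi_0$ and can be cancelled by replacing $\eta_t$ with $\eta_t + d\chi_t$ for a suitable family of functions $\chi_t$ supported in a collar of $K$; this does not alter the identity $d\eta_t = \dot{\omega}_t$ and adjusts $X_t$ by the $\omega_t$-Hamiltonian vector field of $\chi_t$, and the matching boundary data provides exactly the freedom needed to arrange $X_t|_K \equiv 0$, whence $\phi_t|_K = \mathrm{id}$.
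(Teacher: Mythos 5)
The paper does not prove this proposition; it is quoted from Lemma~6 of Giroux~\cite{ILD}. Your Moser scheme --- rescale the equation $\iota_{X_t}\omega_t=-\eta_t$ by $u^2$, read off from the boundary restriction that $X_t|_K$ is tangent to $K$ and lies in $\ker(\beta_t|_{TK})$, then pass to the collar of Lemma~\ref{nearKlemma} to verify smoothness of $X_t$ across $K$ --- is the right framework and, once carried out, does work. Two small corrections: $u^2\omega_t=u\,d\beta_t+\beta_t\wedge du$ (plus, not minus; the boundary conclusion is unchanged), and in the $\imath_0$-collar one has $\lambda_t=\mu_t/r$ for a smooth family of $1$-forms $\mu_t$ on $[0,R]\times K$ which for $t\neq 0$ generally acquires a $dr$-component, so the leading singularity is not purely of the form $\alpha_t/r$; the linear system for $X_t=f\partial_r+Y$ is nevertheless invertible with $f(0,\cdot)=0$ and $Y(0,\cdot)\in\xi_t$, exactly as you claim.

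The step that does not hold up is your proposed mechanism for the converse half of the relative statement. You suggest cancelling a nonzero boundary trace $X_t|_K\in\xi_0$ by trading $\eta_t$ for $\eta_t+d\chi_t$. But the $\omega_t$-Hamiltonian vector field of any smooth function $\chi_t$ is subject to the very same $u^2$-rescaled boundary analysis you ran for $X_t$: rescaling forces it to be tangent to $K$, to lie in $\ker(\beta_t|_{TK})$, and --- pushing the collar computation one order further --- to vanish identically on $K$, because the $\xi$-component of the rescaled equation at $r=0$ becomes $\iota_{Y(0)}d(\beta_t|_{TK})=0$ with $d(\beta_t|_{TK})$ nondegenerate on $\xi_0$. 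So this gauge freedom does not move $X_t|_K$ at all and cannot be used to kill it. Fortunately no correction is needed: the same higher-order computation applied to your original $X_t$ (with $\eta_t=\dot\beta_t/u$) identifies $\iota_{X_t}d(\beta_t|_{TK})$ along $K$ with the component of $\partial_t(\beta_t|_{TK})$ annihilating the Reeb field of $\beta_t|_{TK}$. When all $\omega_t$ induce the same boundary contact structure, $\beta_t|_{TK}$ is a family of contact forms with fixed kernel, hence $\partial_t(\beta_t|_{TK})$ is a pointwise multiple of $\beta_t|_{TK}$, that component vanishes, and $X_t|_K\equiv 0$ automatically. Replace the $d\chi_t$ argument with this observation (or, equivalently, build the isotopy near $K$ directly as $\imath_t\circ\imath_0^{-1}$ from Lemma~\ref{nearKlemma} with a fixed boundary contact form $\alpha_0$, and glue to the interior Moser flow).
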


Ideal Liouville domains are particularly useful for clarifying the existence and uniqueness of the contact structures supported by open books in higher dimensions. We first recollect some facts on open books. 

An \textit{open book} in a closed manifold $V$ is a pair $(K,\Theta)$ where
\begin{enumerate}[label=\text{(ob\arabic*)}]
\item \label{ob1}$K\subset V$ is a closed co-dimension two submanifold with trivial normal bundle;
\item \label{ob2}$\Theta: V\setminus K\rightarrow S^1=\mathbb{R}/2\pi\mathbb{Z}$ is a locally trivial fibration such that $K$ has a neighbourhood $U$, which admits a parametrization $(re^{ix},q)\in \mathbb{D}\times K\cong U$ so that $\Theta$ reads as $\Theta(re^{ix},q)=x$ on $U$.
\end{enumerate}  
The submanifold $K$ is called the \textit{binding} of the open book and the closures of the
fibres of $\Theta$ are called the \textit{pages}. All the pages are compact manifolds, for which the binding is the common boundary. We note that the canonical orientation of $S^1$ induces co-orientations on the pages and the binding. Hence if $V$ is oriented then so are the pages and on the binding. Another way of defining an open book is the following. Let $h:V\rightarrow \mathbb{C}$ be a smooth function such that 
\begin{enumerate}[label=\text{(df\arabic*)}]
\item \label{df1}$h$ vanishes transversely;
\item \label{df2}$\Theta:=h/|h|:V\setminus K\rightarrow S^1$ has no critical points, where $K:=h^{-1}(0)$.
\end{enumerate}
Then the pair $(K,\Theta)$ is an open book in $V$. Moreover, any open book in $V$ may be recovered via a \emph{defining function} $h$ as above and such a defining function is unique up to multiplication by a positive function on $V$.

Given an open book $(K,\Theta)$ in a closed manifold $V$, one finds a vector filed $X$, refered as a \emph{spinning vector field}, on $V$ such that 
\begin{enumerate}[label=\text{(m\arabic*)}]
\item \label{m1}$X$ lifts to a smooth vector field on the manifold with boundary obtained from $V$ by a real oriented blow-up along $K$;
\item \label{m2}$X=0$ on $K$ and $(\Theta^*dx)(X)=2\pi$ on $V\setminus K$.
\end{enumerate}
Then the time-one-map of the flow of $X$ is a diffeomorphism  
$$\phi: F\rightarrow F$$ 
of the $0$th-page $F:=\Theta^{-1}(0)\cup K$, which fixes $K$. The isotopy class $[\phi]$ is called the \emph{monodromy} of the open book and it turns out that the open book is characterized by the pair $(F,[\phi])$. Namely, given the pair $(F,\phi)$, one defines the mapping torus 
$$MT(F,\phi):=([0,2\pi]\times F )\big/\sim \,;\; (2\pi,q)\sim (0,\phi(q)),$$
which is a manifold with boundary. One has the natural fibration 
$$\hat{\Theta}:MT(F,\phi)\rightarrow S^1,$$
where all fibres are diffeomorphic to $F$ and there is a natural parametrization of the fibre $\hat{\Theta}^{-1}(0)$ via the restriction of the above quotient map to $\{0\}\times F$. It turns out that if $\phi'\in [\phi]$, then there is a diffeomorphism between $MT(F,\phi)$ and $MT(F,\phi')$ that respects the fibrations over $S^1$ and the natural parametrizations of the $0$-th pages. Now given $MT(F,\phi)$, one collapses its boundary, which is diffeomorphic to $S^1\times K$, to $K$ and obtains so called the \emph{abstract open book} $OB(F,\phi)$. In fact, the closed manifold $OB(F,\phi)$ admits an open book given by the pair $(K,\Theta) $ where $\Theta$ is induced from $\hat{\Theta}$. Moreover, for $\phi'\in [\phi]$, the diffeomorphism between $MT(F,\phi)$ and $MT(F,\phi')$ descends to a diffeomorphism between corresponding abstract open books. In particular, $V$ and $OB(F,\phi)$ may be identified together with their open book structures. We note that one may choose a vector field $X$ that is actually smooth on $V$ (compare with \ref{m1}) and even 1-periodic near $K$. But it is not possible to obtain any given representative of the monodromy class via such a vector field. In fact, in order to obtain all representatives of the monodromy class, one needs to sweep out the whole affine space of spinning vector fields.

Open books meet with the contact topology via the following definition. Let $V$ be a closed manifold and $\xi$ be a co-oriented contact structure on $V$. We say $\xi$ is \emph{supported} by an open book $(K,\Theta)$ on $V$ if there is a contact form $\alpha$ on $(V,\xi)$, that is $\xi=\ker \alpha$, such that 
\begin{itemize}
\item $\alpha$ restricts to a (positive) contact form on $K$;
\item $d\alpha$ restricts to a (positive) symplectic form on each fibre of $\Theta$.
\end{itemize}
It turns out that given a closed contact manifold $V$, the isotopy classes of co-oriented contact structures are in one-to-one correspondence of (equivalence classes of) supporting open books. This statement is a very rough summary of what is called the Gioux correspondence. We will recall certain pieces of this celebrated statement in detail. 
\begin{thm}\label{thm10}(Theorem 10 in \cite{openbook}) Any contact structure on a closed manifold is supported by an open book with Weinstein pages.   
\end{thm}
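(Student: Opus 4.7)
The plan is to follow Giroux's original approach, which adapts Donaldson's approximately holomorphic techniques from symplectic geometry to the contact setting. I would fix a contact form $\alpha$ with $\ker \alpha = \xi$ and a compatible almost complex structure $J$ on $\xi$, so that $d\alpha(\cdot, J\cdot)$ defines a hermitian metric on the hyperplane distribution. Let $L \to V$ be a hermitian line bundle equipped with a connection whose curvature is close to $-i\,d\alpha$, and consider its tensor powers $L^{\otimes k}$ for large $k$.

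The heart of the argument is to produce, for all sufficiently large $k$, an asymptotically $J$-holomorphic section $s_k$ of $L^{\otimes k}$ that vanishes transversely along a codimension-two contact submanifold $K_k \subset V$ and whose normalization $\Theta_k := s_k/|s_k| : V \setminus K_k \to S^1$ has no critical points. Granted such an $s_k$, the pair $(K_k, \Theta_k)$ satisfies \ref{df1} by transverse vanishing and \ref{df2} by the absence of critical points, and hence defines an open book on $V$ as in \ref{ob1} and \ref{ob2}: the trivialization of the normal bundle of $K_k$ is provided by $s_k$ itself, and the local product model near $K_k$ follows from the asymptotically holomorphic normal form around a transverse zero of $s_k$.

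To verify that the pages are Weinstein, let $F_\theta := \overline{\Theta_k^{-1}(\theta)}$. By asymptotic holomorphicity, $\xi$ is uniformly transverse to the fibres of $\Theta_k$ away from $K_k$, so $d\alpha$ restricts to an exact symplectic form on each page with Liouville primitive given by the restriction of $\alpha$. Up to a small perturbation, the Liouville vector field is gradient-like for the Morse function $\log|s_k|$ on $F_\theta \setminus K_k$, whose critical points have index at most $n$; this yields the Weinstein structure on each page, with the binding $K_k$ as the (convex) boundary.

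The principal obstacle will be establishing the asymptotic transversality estimates for the sections $s_k$. One needs a contact analogue of the Donaldson-type local transversality lemma, together with a globalization argument that simultaneously controls the transverse vanishing of $s_k$ and the absence of critical points of $s_k/|s_k|$ uniformly in $k$. Carrying out these estimates while accounting for the Reeb direction, which does not fit into the complex framework, is the most technical step; once this quantitative control is in place, the verification of the open book axioms and the Weinstein property of the pages follows from the local models described above.
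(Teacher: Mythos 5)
The paper does not prove this theorem: it is cited verbatim as Theorem~10 from Giroux's ICM address \cite{openbook}, and the actual proof (due to Giroux and Mohsen, adapting Ibort--Mart\'inez--Presas's contact version of Donaldson's approximately holomorphic techniques) is not reproduced. So there is no in-paper argument to compare against; what can be assessed is whether your sketch is a faithful outline of the known proof, and on the whole it is. Fixing a compatible $J$ on $\xi$, taking the trivial hermitian line bundle with connection $d-ik\alpha$ (possible precisely because $d\alpha$ is exact, which also guarantees the required triviality of the normal bundle of the binding), producing asymptotically holomorphic sections $s_k$ of $L^{\otimes k}$ with uniform transversality estimates so that $K_k=s_k^{-1}(0)$ is a contact submanifold and $s_k/|s_k|$ has no critical points, and then extracting the Weinstein structure on the pages from a Lyapunov function built from $\log|s_k|$ with index bounds coming from approximate holomorphicity --- this is indeed the architecture of the Giroux--Mohsen argument.

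Two points where your sketch compresses steps that carry real content. First, the Weinstein claim requires more than "up to a small perturbation": one must check that the restriction of $\alpha$ to each page is a Liouville form whose Liouville field admits a Lyapunov function with nondegenerate zeros of index $\le n$; the index bound is where the quantitative estimates on $\bar\partial s_k$ versus $\partial s_k$ enter, in parallel with the Lefschetz-pencil argument in the symplectic case, and it is not a generic perturbation statement. Second, simultaneously arranging (i) uniform transversality of $s_k$ along its zero set, (ii) the contact condition on $K_k$, and (iii) the absence of critical points of $s_k/|s_k|$, all with constants independent of $k$, is the genuinely delicate globalization step; you correctly flag it as the main obstacle, but it is worth being explicit that the Reeb direction is handled by decoupling it from the CR-type estimates on $\xi$, and that the relevant local transversality lemma must be applied to the $\mathbb{C}$-valued $s_k$ and to its "angular part" together. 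Subject to those two caveats being filled in, your outline matches the published strategy.
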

The above statement is the core part of the correspondence between supported open books and contact structures. In fact the existence statement for the opposite direction is relatively easy to achieve, especially in dimension three. Namely, given an open book in a 3-dimensional closed manifold, it is not hard to construct a contact form on the corresponding abstract open book, whose kernel is supported. It turns out that in higher dimensions, one needs to a have an exact symplectic page and a symplectic monodromy in order to construct a contact form on an abstract open book, whose kernel is supported, see Proposition 9 in \cite{openbook} and Proposition 17 in \cite{ILD}. We will carry out such a construction in Proposition \ref{niceform}. Concerning the uniqueness features of the Giroux correspondence, we are mainly interested in one side, namely the "uniqueness" of supported contact structures. It turns out that such a statement is again more involved in higher dimensions. Philosophically, given an open book, the symplectic geometry of the pages determines the supported contact structures and in dimension three, any two symplectic structure on a page are isotopic since they are simply two area forms on a given surface. But in higher dimensions, this is not true in general.  

In \cite{ILD}, Giroux introduced the notion of a Liouville open book, which clears out the technicalities that pointed above. 

A \emph{Liouville open book}, abbreviated as  LOB, in a closed manifold $V$ is a tripple $(K,\Theta,(\omega_x)_{x\in S^1})$ where
\begin{enumerate}[label=\text{(lob\arabic*)}]
\item \label{lob1}$(K,\Theta)$ is an open book on $V$ with pages $F_x=\Theta^{-1}(x)\cup K$, $x\in S^1$;
\item \label{lob2} $(F_x,\omega_x)$ is an ILD for all $x\in S^1$ and the following holds:  there is a defining function $h:V\rightarrow \mathbb{C}$ for $(K,\Theta)$ and a $1$-form  $\beta$ on $V$ such that the restriction of $d(\beta/|h|)$ to each page is an ILF. More precisely, 
$$\omega_x=d(\beta/|h|)_{|TF_x^o}$$
for all $x\in S^1$.
\end{enumerate}   
The 1-form $\beta$ in \ref{lob2} is called  a \emph{binding 1-form} associated to $h$. Note that if $h'$ is another defining function for $(K,\Theta)$, then  $h'=\kappa h$  for some positive function $\kappa$ on $V$ and $\beta':=\kappa \beta$ is a binding 1-form associated to $h'$. We also note that for a fixed defining function, the set of associated binding 1-forms is an affine space. 

Similar to classical open books, LOB's are characterized by the monodromy, which now has to be symplectic. Namely, one  considers a \emph{symplectically spinning vector field}, that is a vector filed $X$ satisfying \ref{m1}-\ref{m2} and generating the kernel of a closed 2-form on $V\setminus K$, which restricts to $\omega_x$ for all $x\in S^1$. Given such a vector field, the time-one-map of its flow, say $\phi$, is a diffeomorphism of $F:=F_0$, which fixes $K$ and preserves $\omega:=\omega_0$. The isotopy class $[\phi]$, among the symplectic diffeomorphisms that fixes $K$, is called the \emph{symplectic monodromy} and characterizes the given LOB. For the construction of a LOB in the abstract open book $OB(F,\phi)$, where $\phi^*\omega=\omega$, we refer to Propostion 17 in \cite{ILD} and Proposition \ref{niceform}. 

Similar to the classical open books, symplectically spinning vector fields form an affine space and all representatives of the symplectic monodromy may be obtained by sweeping out this affine space. It turns out that the obvious choice of a symplectically spinning vector field is actually smooth and by modifying a given binding 1-form along $\Theta$, it is possible to get a symplectically spinning vector filed, whose flow is 1-periodic near the binding. 
\begin{lem}\label{lem15}(Lemma 15 in \cite{ILD}) Let $(K, \Theta,(\omega_x)_{x\in S^1})$ be a LOB on a closed manifold $V$ and $h: V\rightarrow \mathbb{C}$ be a defining function for $(K,\Theta)$. Then for every
binding 1-form $\beta$, the vector field $X$ on $V\setminus K$ spanning the kernel of $d(\beta/|h|)$ and
satisfying $(\Theta^*dx)(X)=2\pi$ extends to a smooth vector field on $V$ which is zero along K.
Furthermore, $\beta$ can be chosen so that $X$ is 1-periodic near K.
\end{lem}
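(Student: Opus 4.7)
The plan is to work in standard open-book coordinates near $K$ and analyze the equation $\iota_X d(\beta/|h|)=0$ by Taylor expansion in the radial parameter.

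First, I would choose a tubular neighborhood $U\cong \mathbb{D}\times K$ of $K$ with coordinates $(re^{ix},q)$ in which $\Theta(re^{ix},q)=x$; since any two defining functions for $(K,\Theta)$ differ by a positive factor, we may arrange the analysis so that $|h|$ behaves like $r$ in these coordinates. On the real-oriented blow-up of $V$ along $K$ we expand $\beta=P\,dx+Q\,dr+\sum_j R_j\,dq^j$. Smoothness of $\beta$ on $V$ (in the Cartesian coordinates $(x_1,x_2,q)=(r\cos x,r\sin x,q)$) forces $P=r\tilde P$ for some smooth $\tilde P$, and forces $R_j(0,x,q)$ to be independent of $x$, thereby defining a positive contact form $\alpha_0:=\sum_j R_j(0,x,q)\,dq^j$ on $K$, which equals $\beta|_{TK}$. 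The two conditions $dx(X)=2\pi$ and $\iota_X d(\beta/|h|)=0$ uniquely determine $X$ on $V\setminus K$; decomposing $X=2\pi\partial_x+b\,\partial_r+Y$ with $Y$ tangent to the $K$-directions, the kernel condition rearranges to $r\,\iota_X d\beta=b\beta-\beta(X)\,dr$.

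Next, I would solve this equation order by order in $r$. The $r^0$-order yields $b|_{r=0}=0$ together with $\alpha_0(Y|_{r=0})=0$, while the $dq^j$-component at order $r^1$ gives $\iota_{Y_0}d\alpha_0=b_1\alpha_0$. Pairing with the Reeb vector field of $\alpha_0$ forces $b_1=0$, which in turn forces $Y_0\in\ker\alpha_0\cap\ker d\alpha_0=\{0\}$ by the contact property of $\alpha_0$. Hence $b=O(r^2)$ and $Y=O(r)$ on the blow-up. Passing to the Cartesian coordinates $(x_1,x_2,q)$, the leading term $2\pi\partial_x$ becomes $-2\pi x_2\partial_{x_1}+2\pi x_1\partial_{x_2}$, which is smooth on $V$ and vanishes along $K$; the vanishing orders just established are exactly what is needed for the correction terms to descend to smooth vector fields on $V$ vanishing at $K$. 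This proves the first assertion.

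For the 1-periodicity refinement I would exploit the affine gauge freedom in the choice of $\beta$: the modification $\beta\mapsto\beta+\gamma$ preserves the LOB $(K,\Theta,(\omega_x))$ provided $d(\gamma/|h|)|_{TF_x^o}=0$ for every $x$. The goal is to arrange $\beta=p^*\alpha_0$ on a smaller neighborhood $U'\subset U$, where $p\colon\mathbb{D}\times K\to K$ denotes the projection. With such a choice $L_{\partial_x}\beta=0$ and $\iota_{\partial_x}\beta=0$, so $\iota_{\partial_x}d(\beta/|h|)=0$; by the uniqueness discussion above this forces $X=2\pi\partial_x$ throughout $U'$, and its flow $(r,x,q)\mapsto(r,x+2\pi t,q)$ is manifestly 1-periodic.

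The main obstacle is to construct a binding 1-form $\beta$ with this specific local form. Starting from any $\beta_0$, I would use that the open book has trivial monodromy near $K$ to select, smoothly in $x$, the collar trivializations produced by Lemma \ref{nearKlemma} applied to the ILD's $(F_x,\omega_x)$; this yields a single parametrization of a neighborhood of $K$ in which the ideal Liouville form $(\beta_0/|h|)|_{F_x}$ coincides with $\alpha_0/r$ for every $x$. A fiberwise Moser argument modelled on Proposition \ref{cor5}, applied to an interpolating path from $\beta_0$ to $p^*\alpha_0$ inside the affine space of binding 1-forms for the given LOB, then produces the required modification, completing the proof.
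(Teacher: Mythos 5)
The paper does not prove this statement; it quotes Lemma~15 from Giroux's \emph{Ideal Liouville domains} paper verbatim and uses it as a black box, so there is no ``paper's own proof'' to compare against. Judged on its own, your sketch follows the natural route (polar-coordinate expansion of the kernel equation near $K$, then gauge the binding form near $K$ to obtain a rotation), and the algebra in the order analysis is sound: at order $r^0$ the equation forces $b(0)=0$ and $\alpha_0(Y_0)=0$; at order $r^1$ pairing with the Reeb vector field of $\alpha_0$ kills $b_1$ and then the contact condition kills $Y_0$, so $b=O(r^2)$ and $Y=O(r)$.

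However, there is a genuine gap in the step where you conclude smoothness of $X$ on $V$. Vanishing orders on the real-oriented blow-up do \emph{not} imply that a vector field descends to a smooth vector field on $V$. For instance, $r^2\partial_r$ corresponds in Cartesian coordinates $(x_1,x_2)=(r\cos x, r\sin x)$ to $\sqrt{x_1^2+x_2^2}\,(x_1\partial_{x_1}+x_2\partial_{x_2})$, which is only $C^1$ at the origin, even though $b=r^2$ vanishes to order two. Similarly, $rY_1(x,q)$ descends smoothly only if its Fourier expansion in $x$ is concentrated in the modes $e^{\pm ix}$ (so that $re^{\pm ix}=x_1\pm ix_2$ appears), and higher $r$-coefficients must obey increasingly stringent Fourier constraints. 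So the sentence ``the vanishing orders just established are exactly what is needed for the correction terms to descend to smooth vector fields on $V$'' asserts precisely the nontrivial point and does not follow from what you proved. To close the gap you would either need to track the angular structure of $b$ and $Y$ order by order, showing that the constraints on each Taylor coefficient are forced by the fact that $\beta$, $d\beta$, and the normalization all descend from smooth objects on $V$, or replace the order analysis with a normal form for $\beta$ near $K$ (as Giroux does) from which $X$ can be read off explicitly. The second half of your argument (choosing $\beta=p^*\alpha_0$ near $K$ after a collar normalization, so that $X=2\pi\partial_x$ there) is the right idea, although one should justify more carefully that the Lemma~\ref{nearKlemma}-collars for the pages $(F_x,\omega_x)$ can be chosen smoothly and $S^1$-equivariantly in $x$ near $K$, which is a nontrivial uniformity claim that the sketch defers to a ``fiberwise Moser argument'' without stating the hypotheses that make it run.
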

Natural sources of LOBs are contact manifolds, namely we have the following statement. 
\begin{prop}\label{prop18} (Proposition 18 in \cite{ILD}) Let $(V, \xi)$ be a closed contact
manifold, and $(K, \Theta)$ be a supporting open book with defining function $h: V\rightarrow \mathbb{C}$.
Then the contact forms $\alpha$ on $(V,\xi)$ such that $d(\alpha/|h|)$ induces an ideal Liouville structure
on each page form a non-empty convex cone.
\end{prop}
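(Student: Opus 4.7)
The proof splits into verifying convexity and non-emptiness of the cone.

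\emph{Convexity.} Given two contact forms $\alpha_0, \alpha_1$ in the set and positive scalars $s, t$, set $\alpha := s\alpha_0 + t\alpha_1$. Since $\alpha_0$ and $\alpha_1$ share the kernel $\xi$, they are positive multiples of each other, so $\alpha$ is again a contact form with $\ker\alpha = \xi$. Linearity of $d$ gives
\begin{equation*}
d(\alpha/|h|)|_{TF_x^o} = s\, d(\alpha_0/|h|)|_{TF_x^o} + t\, d(\alpha_1/|h|)|_{TF_x^o},
\end{equation*}
a positive combination of two ILSs on $F_x$. With the common choice $u = |h|\big|_{F_x}$ (satisfying (\ref{prop_of_u})), the extension $u\cdot(\alpha/|h|)|_{TF_x^o} = \alpha|_{TF_x^o}$ is smooth on $F_x$ and restricts to $TK$ as a positive combination of contact forms inducing $\xi|_{TK}$, hence is itself a contact form along $K$. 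Non-degeneracy of the combination on $F_x^o$ follows from Lemma \ref{nearKlemma} (putting both summands in cone model near $K$, where positive combinations of cone-type ILFs remain cone-type) together with the fact that away from $K$ both forms induce the same page orientation coming from the open book.

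\emph{Non-emptiness.} Start from a contact form $\alpha$ on $(V,\xi)$ supporting $(K,\Theta)$, which exists by hypothesis. After multiplication by a suitable positive function (preserving $\ker\alpha = \xi$), a standard normal form for contact forms supporting an open book near the binding puts $\alpha$ in the shape
\begin{equation*}
\alpha = \alpha_K + |h|^2 \,d\Theta
\end{equation*}
(up to higher-order corrections in $|h|$) in a tubular neighborhood $U$ of $K$, where $\alpha_K$ is a contact form on $K$. In local coordinates $(re^{ix}, q)\in \mathbb{D}\times K$ this reads $\alpha = \alpha_K(q) + r^2\, dx$, and a direct computation yields
\begin{equation*}
d(\alpha/|h|)|_{TF_x^o} = -r^{-2}\,dr\wedge\alpha_K + r^{-1}\,d\alpha_K,
\end{equation*}
the standard cone symplectic form on $(0,R)\times K$. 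This satisfies the ILS condition with $u = r$: the product $r\cdot(\alpha/r)|_{TF_x^o} = \alpha_K|_{TF_x^o}$ extends smoothly across $K$ and restricts to $\alpha_K$ on $K$.

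The hard part is ensuring the symplectic condition on the part of each page away from $K$. Writing
\begin{equation*}
d(\alpha/|h|)|_{TF_x^o} = |h|^{-1}\, d\alpha|_{TF_x^o} - |h|^{-2}\, d|h|\wedge\alpha|_{TF_x^o},
\end{equation*}
non-degeneracy is a nontrivial open condition that need not hold for a generic supporting form. The plan is to modify $\alpha$ further on $V\setminus U$ through a path of contact forms with kernel $\xi$, interpolating with a binding 1-form for a LOB structure on $(V,\xi)$ whose existence follows from Theorem \ref{thm10} together with Lemma \ref{lem15}. The interpolation is carried out via a cutoff in a collar of $\partial U$, and the stability results in Propositions \ref{cor5} and \ref{lem6} ensure that the deformation preserves both the contact condition on $V$ and the ILS condition on each page.
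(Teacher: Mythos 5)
Your convexity argument has a genuine gap at the non-degeneracy step. Having observed that $\alpha_1 = f\alpha_0$ for some positive function $f$ (so that $\alpha := s\alpha_0 + t\alpha_1 = (s+tf)\alpha_0$ is again a contact form defining $\xi$), you assert that non-degeneracy of $s\,d(\alpha_0/|h|) + t\,d(\alpha_1/|h|)$ on each page follows from the cone model near $K$ together with agreement of page orientations away from $K$. This inference fails once $\dim F_x \geq 4$: two symplectic forms inducing the same orientation can sum to a degenerate form. For instance $dx_1\wedge dy_1 + dx_2\wedge dy_2$ and $-dx_1\wedge dy_1 - dx_2\wedge dy_2$ on $\mathbb{R}^4$ both have positive square, yet their sum vanishes. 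The relation $\alpha_1 = f\alpha_0$, which you record but do not exploit at this step, is what actually makes the combination work. Put $\lambda := (\alpha_0/|h|)|_{TF_x^o}$, $g := s+tf$, and use $(dg\wedge\lambda)^2=0$ to expand
\begin{equation*}
\left(d(g\lambda)\right)^n = n\,g^{n-1}\,dg\wedge\lambda\wedge(d\lambda)^{n-1} + g^n(d\lambda)^n.
\end{equation*}
Writing $A := n\,df\wedge\lambda\wedge(d\lambda)^{n-1}$ and $B := (d\lambda)^n$, all restricted to $TF_x^o$, and using $dg=t\,df$, division by $g^{n-1}>0$ reduces positivity of the left-hand side to positivity of $tA + gB = t(A+fB) + sB$. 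The ILS hypothesis on $\alpha_0$ gives $B>0$; the one on $\alpha_1$ gives $f^{n-1}(A+fB) = (d(f\lambda))^n > 0$, hence $A+fB>0$. Thus $tA+gB$ is a sum of two positive terms, and this algebraic identity, not the orientation remark, is the actual content of convexity.

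The non-emptiness argument is circular. You need to produce a contact binding $1$-form for the given open book $(K,\Theta)$, but you invoke Lemma \ref{lem15}, which already presupposes the existence of a Liouville open book structure (that is, of a binding $1$-form with the ILS property) in order to modify it, and Theorem \ref{thm10}, which produces some supporting open book with Weinstein pages rather than the given $(K,\Theta)$. Neither supplies the missing seed. Furthermore, Propositions \ref{cor5} and \ref{lem6} compare two ideal Liouville structures that are already in hand and cannot be used to manufacture one. Non-emptiness is precisely what has to be proved: in \cite{ILD} one starts from an arbitrary contact form $\alpha$ supporting $(K,\Theta)$ and rescales it by a carefully constructed positive conformal factor so as to force the ILS condition page by page, using only the supporting data, namely that $d\alpha$ is symplectic on pages and $\alpha$ is contact along $K$. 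Your near-binding normal form is a reasonable first step towards that, but the global construction of the conformal factor is where the work lies, and your proposal leaves it open.
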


Let $(K,\Theta,(\omega_x)_{x\in S^1})$ be a LOB on a closed manifold $V$ with a defining function $h$. A co-oriented contact structure $\xi$ on $V$ is said to be  \emph{symplectically supported} by $(K,\Theta,(\omega_x)_{x\in S^1})$ if there exists a contact form $\alpha$ on $(V,\xi)$ such that $\alpha$ is a binding 1-form of the LOB associated to $h$. 

By our remark following the definition of the binding 1-form, the definition of being symplectically supported is independent of the given defining function. But the crucial fact is that once a defining function is fixed, a contact binding 1-form is unique whenever it exists, see Remark 20 in \cite{ILD}. Hence, once a defining function $h$ is fixed, there is a one-to-one correspondence between contact structures supported by $(K,\Theta,(\omega_x)_{x\in S^1})$ and contact binding 1-forms associated to $h$. Now given two contact structures $\xi_0$ and $\xi_1$ supported by $(K,\Theta,(\omega_x)_{x\in S^1})$, there exist unique contact binding 1-forms $\alpha_0$ and $\alpha_1$ respectively. Since the set of binding 1-forms associated to $h$ is affine, there is a path $(\beta_t)_{t\in [0,1]}$ of binding 1-forms such that $\beta_0=\alpha_0$ and $\beta_1=\alpha_1$. Then by modifying $\beta_t$'s along the 1-form $\Theta^*dx$, one gets  a path of contact forms $(\beta^c_t)_{t\in [0,1]}$ and a homotopy $\left((\beta^s_t)_{t\in [0,1]}\right)_{s\in [0,c]}$ between the paths $(\beta_t)_{t\in[0,1]}$ and $(\beta^c_t)_{t\in [0,1]}$ such that 
\begin{itemize}
\item for all $s\in [0,c]$ and $t\in[0,1]$, $\beta^s_t$ is a binding 1-form for $(K,\Theta,(\omega_x)_{x\in S^1})$ associated to $h$ (since $\beta_t$'s stay the same along the pages through the modification);
\item for all $s\in [0,c]$, $\beta^s_0$ and $\beta_1^s$ are contact forms (since if $\beta_t$ is already a contact form then it keeps being a contact form through the modification).
\end{itemize}  
In particular, whenever $\beta^s_t$ is a contact form, $\ker \beta^s_t$ is symplectically supported by $\left(K,\Theta,(\omega_x)_{x\in S^1}\right)$ and $\beta_t^s$ is the unique contact binding 1-form associated to $h$. This tells us that the concatenation of the paths 
$(\ker \beta_0^s)_{s\in[0,c]}$, $(\ker \beta^c_t)_{t\in [0,1]}$ and $(\ker \beta^{c-s}_1)_{s\in [0,c]}$ gives an isotopy between $\xi_0$ and $\xi_1$ along the contact structures that are symplectically supported by  $(K,\Theta,(\omega_x)_{x\in S^1})$. In fact the following more general statement holds. 
\begin{prop}\label{prop21}(Proposition 21 in \cite{ILD}) On
a closed manifold, contact structures supported by a given Liouville open book
form a non-empty and weakly contractible subset in the space of all contact structures.
\end{prop}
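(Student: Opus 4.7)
The plan is to treat non-emptiness and weak contractibility separately, reusing the same circle of ideas as in the $k=0$ argument sketched just before the statement.

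For non-emptiness I would invoke the abstract-open-book construction: by Lemma \ref{lem15}, a suitable symplectically spinning vector field for the given LOB produces a symplectic monodromy class on the $0$-th page, and from this data the explicit construction in Proposition 17 of \cite{ILD} (which I will carry out directly in Proposition \ref{niceform}) yields a contact form $\alpha$ on $V$ which is a binding 1-form of the LOB. Then $\ker\alpha$ belongs to the set $\mathcal{S}$ of contact structures symplectically supported by the LOB.

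For weak contractibility I need to show that every continuous map $f:S^k\to \mathcal{S}$ extends continuously to $D^{k+1}\to \mathcal{S}$. Fix a defining function $h:V\to \mathbb{C}$. By Remark 20, the supported contact structures are in bijective (and homeomorphic) correspondence with the subset $\mathcal{B}_h^c$ of contact binding 1-forms associated to $h$, which sits inside the affine space $\mathcal{B}_h$ of all binding 1-forms associated to $h$. Lift $f$ to $\tilde f:S^k\to \mathcal{B}_h^c$, and use affinity (hence contractibility) of $\mathcal{B}_h$ to extend $\tilde f$ to a continuous map $g:D^{k+1}\to \mathcal{B}_h$; for instance by coning to a fixed base point $\tilde f(q_0)\in \mathcal{B}_h^c$. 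The interior values $g(q)$ need not be contact, and this is fixed by the same modification used for $k=0$: add $s\,\Theta^*dx$ to $g(q)$, suitably regularized near $K$. Because $\Theta^*dx$ vanishes on the pages, this preserves the binding 1-form condition, while a sufficiently large multiplier $s$ makes the form contact. By compactness of $D^{k+1}$ and openness of the contact condition, I can choose a continuous non-negative function $s:D^{k+1}\to [0,\infty)$ with $s|_{S^k}=0$ such that $g(q)+s(q)\Theta^*dx$ is contact on all of $D^{k+1}$, after first applying the same modification along a collar of $S^k$ (which merely deforms $f$ within $\mathcal{S}$ and can be absorbed into the extension). Setting $F(q):=\ker\bigl(g(q)+s(q)\,\Theta^*dx\bigr)$ then gives the required extension to $\mathcal{S}$.

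The main technical obstacle is that the 1-form $\Theta^*dx$ is singular along the binding $K$ (it behaves like $d\theta$ on a punctured disc), so the modification has to be carried out with a smooth global 1-form on $V$ that agrees with $\Theta^*dx$ away from $K$ and vanishes to sufficient order at $K$; concretely, one uses a form dual to a choice of symplectically spinning vector field as in Lemma \ref{lem15}. One must check that such a regularized modifier both preserves the binding 1-form property (because it still vanishes along the pages) and restores contactness for $s$ large enough, uniformly in the compact family parameter $q$. Once these two compatibilities are in place, the remaining selection of $s(q)$ is a standard continuity/partition-of-unity argument.
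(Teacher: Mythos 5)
The paper does not give its own proof of Proposition \ref{prop21}; the statement is imported verbatim from \cite{ILD}, and the paragraph just before it only sketches the special case of path-connectedness (extending a map $S^0\to\mathcal{S}$ to $D^1$ by concatenating the $\Theta^*dx$-modification in, the affine path of binding $1$-forms, and the modification out). Your proposal takes essentially the same approach as that sketch — lift via the bijection with contact binding $1$-forms (Remark 20 of \cite{ILD}), extend across $D^{k+1}$ by affinity of the binding $1$-form space, then restore contactness by adding a regularized multiple of $\Theta^*dx$, with a collar adjustment near $S^k$ — and generalizes it to all $k$, which is the expected route, together with the standard non-emptiness argument via Proposition 17 of \cite{ILD}.
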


Now we are ready to construct the special contact form on given contact manifold $(V,\xi)$. Our construction is the generalization of Proposition 1 in \cite{general} to any dimension.
\begin{prop}\label{niceform}
Let $(V,\xi)$ be a closed connected contact manifold. Then there is an embedded compact hypersurface $F\subset V$
with the following property. Given any $\epsilon >0$, there exists a contact form $\alpha$ on $(V,\xi)$ such that $F$ is a global hypersurface of section for the Reeb flow of $\alpha$ and the followings hold.
\begin{enumerate}[label=\text{(F\arabic*)}]
\item \label{F1} $\alpha$ restricts to a contact form on $K:=\partial F$, for which the closed Reeb orbits have period at least $1/2$.
\item \label{F2} The first return time function 
$$\tau :F\setminus \partial F \rightarrow (0,+\infty)$$
of the Reeb flow of $\alpha$ extends to a smooth function on $F$ and 
the corresponding first return map 
$$\varphi :F\setminus \partial F \rightarrow F\setminus \partial F$$
extends to a diffeomorphism of $F$.   
\item \label{F3} There exists an open subset $U\subset F$ such that the support 
of $\varphi$ is contained in $F\setminus U$ and $\tau \equiv 1$ on $U$. 
\item \label{F4}  $$||\tau-1||_\infty <\min \{\epsilon,\frac{\epsilon}{{\rm vol}(K, \alpha)}\}.$$ 
\item \label{F5} $$\int _{F\setminus U}(d\alpha)^n<\epsilon$$
where $2n+1=\dim V$. 
\end{enumerate}
\end{prop}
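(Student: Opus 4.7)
The plan is to prove Proposition \ref{niceform} by induction on $n$, with the base case $n=1$ supplied by Proposition 1 of \cite{general}. For the inductive step I would start from Theorem \ref{thm10}, choosing a supporting open book $(K,\Theta)$ for $(V,\xi)$ with Weinstein pages, and take $F$ to be the closure of the zero page, so $\partial F=K$. By Proposition \ref{prop18}, there is a contact form $\alpha$ on $(V,\xi)$ giving this open book the structure of a Liouville open book $(K,\Theta,(\omega_x)_{x\in S^1})$, with $\alpha$ itself as a binding 1-form. Lemma \ref{lem15} allows me to adjust $\alpha$ within the affine space of binding 1-forms associated to a fixed defining function so that the associated spinning vector field is $1$-periodic near $K$; since this spinning vector field is (a positive multiple of) $R_\alpha$, I obtain that $F$ is transverse to the Reeb flow away from $K$, the first return time $\tau$ and the first return map $\varphi$ extend smoothly to $F$, and $\varphi$ is a representative of the symplectic monodromy. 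This gives \ref{F2}.

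Next, I would apply the inductive hypothesis to the lower-dimensional contact manifold $(K,\alpha|_K)$ to obtain a contact form on $K$ whose closed Reeb orbits have period at least $1/2$, giving \ref{F1}. Using the affine structure on binding 1-forms together with Proposition \ref{cor5} and the weak contractibility from Proposition \ref{prop21}, I can then deform $\alpha$ on $V$ so that its restriction to $K$ realizes this induction-provided form, without altering the ambient contact structure up to isotopy. This simultaneously gives good control on the dynamics in a collar neighborhood of $K$ via the explicit model $\imath^*\lambda=\alpha_K/r$ on $[0,R]\times K$ from Lemma \ref{nearKlemma}.

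The heart of the argument is to produce the open set $U\subset F^o$ required by \ref{F3}--\ref{F5}. Using the freedom of Proposition \ref{cor5} to change the ILF within its affine space, together with the Weinstein handle decomposition of $F$, I would take $F\setminus U$ to be the union of a small symplectic neighborhood of the Weinstein skeleton of $F$ and a thin collar $[0,\rho]\times K$ of the binding, and then arrange a representative of the monodromy class supported inside $F\setminus U$. By Lemma \ref{nearKlemma}, the collar contribution to $\int_{F\setminus U}(d\alpha)^n$ factorizes as a small power of $\rho$ times a quantity governed by $\alpha|_K\wedge(d\alpha|_K)^{n-1}$, where the induction hypothesis on $K$ keeps the latter factor controlled; the Weinstein neighborhood of the skeleton has small symplectic volume by construction. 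Together these give \ref{F5}. Because $\varphi\equiv{\rm id}$ on $U$, the first return time is automatically constant on $U$, and a global rescaling normalises it to $1$, establishing \ref{F3}; a final small perturbation via Proposition \ref{cor5} brings $\|\tau-1\|_\infty$ below $\min\{\epsilon,\epsilon/{\rm vol}(K,\alpha)\}$, proving \ref{F4}.

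The main obstacle I expect is precisely the third step: simultaneously localising the monodromy inside a prescribed small set, keeping the complementary symplectic volume small, and preserving the induction-provided behaviour on $K$. In dimension three the page is a surface and such a reduction is essentially elementary; in higher dimensions the interplay between the Weinstein structure of $F$ and the singular behaviour of the ideal Liouville form near $K$ becomes genuinely technical, and feeding the inductive hypothesis on $K$ into Giroux's machinery (Lemma \ref{nearKlemma}, Proposition \ref{cor5}, Proposition \ref{prop21}) in a way that makes all five conditions \ref{F1}--\ref{F5} hold simultaneously is where the bulk of the work lies.
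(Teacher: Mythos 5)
Your high-level architecture — induction on dimension with Proposition 1 of \cite{general} as the base case, Theorem \ref{thm10} to get a supporting open book, Proposition \ref{prop18} to get a binding $1$-form, Lemma \ref{lem15} to make the spinning vector field $1$-periodic near $K$, and the inductive hypothesis applied to $(K,\alpha|_K)$ — matches the paper. However, there is a genuine conceptual gap right at the start of the dynamics: you claim that the symplectically spinning vector field $X$ "is (a positive multiple of) $R_\alpha$", and you deduce from this that the first return map of the Reeb flow is the monodromy $\phi$. This is false. $X$ is defined by the conditions $\iota_X d(\alpha/|h|)=0$ along pages and $(\Theta^*dx)(X)=2\pi$, whereas $R_\alpha$ is defined by $\iota_{R_\alpha}d\alpha=0$ and $\alpha(R_\alpha)=1$; these are different equations and their solutions are not proportional in general. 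Because of this, the monodromy $\phi$ produced by $X$ has no a priori dynamical meaning for the Reeb flow of $\alpha$, and \ref{F2} does not follow from Lemma \ref{lem15} alone.

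The paper resolves this by not working with the original $\alpha$ at all beyond the level of its kernel. After extracting the page $(F,\omega)$ with ILF $\lambda$ and the symplectic monodromy $\phi$, it rebuilds $V$ as the abstract open book $OB(F,\phi)$ and constructs from scratch a one-parameter family of explicit contact forms $\alpha_s$ on $OB(F,\phi)$: away from the binding $\alpha_s=\tilde\alpha_s/(2\pi(1+\delta))$ with $\tilde\alpha_s=dx+s\bigl((1-\beta)\eta+\beta\,\phi^*\eta\bigr)$ for a tailor-made Liouville primitive $\eta$, and near the binding $\alpha_s$ is an explicit interpolation $(f(r)dx+g(r)\alpha_0)/(2\pi(1+\delta))$ built from a carefully chosen curve $\gamma=f+ig$. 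The Reeb vector field of $\alpha_s$ is computed explicitly: away from the binding it is $(\partial_x+Y)/\alpha_s(\partial_x+Y)$ with $Y$ a compactly supported correction, so the return time converges uniformly to $1/(1+\delta)$ as $s\to 0$; near $K$ the return time and map are given in closed form by (\ref{tauandvarphionW'}), and on $(0,r_0)\times V'$ they are exactly $1$ and the identity. The set $U$ is taken to be $(0,r_0)\times V'$ inside the collar, not the complement of a neighborhood of the Weinstein skeleton — the volume control in \ref{F5} comes from three separate estimates: the $s\to 0$ degeneration of $d\eta$ away from the collar (Lemma \ref{alphasareaafterrho}), the smallness of $g(r_0)-g(\rho)$ over $[r_0,\rho]\times K$ (governed by $\delta$), and the inductive estimates \ref{F'4}, \ref{F'5} on $K\setminus V'$. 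Finally, the relationship between $\ker\alpha_s$ and $\ker\alpha$ is established only as an isotopy of contact structures, via Lemma \ref{lem6}, a fibre-wise symplectomorphism $\Phi$ of $OB(F,\phi)$, and Proposition \ref{prop21}; there is no rescaling step normalizing the return time as you propose, and a uniform multiplicative rescaling would in any case disturb \ref{F1}. In short, the missing idea in your plan is the introduction of the explicit family $\alpha_s$ on the abstract open book and the $s\to 0$ limiting argument; without it, neither \ref{F2} nor the quantitative statements \ref{F3}--\ref{F5} can be obtained from the open book data.
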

\begin{proof}
We prove the statement by induction on $\dim V=2n+1$. For $n=1$ the statement follows from the Propositon 1 in \cite{general}. In fact, \ref{F1} follows from the property (i) and  \ref{F4} is a harmless modification of the property (iv). Now assume that 
the statement is true for $n-1$. 

Let $(V,\xi)$ be given such that $\dim V=2n+1$. By Theorem \ref{thm10}, there is an open book $(K,\Theta)$ in $V$ supported by $\xi$. Let $F_x:=\Theta^{-1}(x)$, $x\in S^1=\mathbb{R}/2\pi\mathbb{Z}$ denote the pages of the open book and let $h:V\rightarrow \mathbb{C}$ be a defining function for $(K,\Theta)$.
We want to show that the page 
\begin{eqnarray}\label{0thpage}
F:=\Theta^{-1}(0)\cup K
\end{eqnarray}
is a hypersurface that satisfies (F1)-(F5). 

Let $\epsilon>0$ be given. By Proposition \ref{prop18}, there is a contact form $\alpha$ 
on $(V,\xi)$ such that $(K,\Theta, d(\alpha/|h|)_{TF_x^o})$ is a LOB, which supports $\xi$ symplectically. By Lemma \ref{lem15}, we modify the binding 1-form $\alpha$ and obtain a binding 1-form  $\beta$ such that the associated symplectically spinning vector field $X$ is 1-periodic near $K$. 
Hence the time-one-map of the flow of $X$ gives us  a diffeomorphism 
$\phi:F\rightarrow F$ such that  
\begin{equation}\label{monodromy}
\phi^*(d\lambda)=d\lambda
\end{equation}
where $\lambda\in \Omega^1(F^o)$ is the ILF given by 
\begin{eqnarray}\label{lambda}
\lambda:=(\beta/|h|)_{|TF^o}=(\alpha/|h|)_{|TF^o}
\end{eqnarray}
and $\phi={\rm id}$ on some neighbourhood of $K$ in $F$. Now our aim is to recover $V$ as the 
abstract open book induced by the pair $(F,\phi)$ and to define a contact form on the abstract open book with the desired properties. 

In order to construct the desired contact form, we first need a suitable contact form on $K=\partial F$. We note that 
$(K, \ker (\alpha_{|TK}))$ is a $(2n-1)$-dimensional closed connected contact manifold. In fact, since $F$ is a Weinstein domain, see Theorem \ref{thm10}, and $\dim F\geq 4$, $\partial F$ is connected. By the inductive hypothesis, there is a compact hypersurface $F'\subset K$ such that for a given $\epsilon'>0$, there is contact form $\alpha'$ on $(K,\ker (\alpha_{|TK}))$ with the following properties.
\begin{enumerate}[label=\text{(F'\arabic*)}]
\item \label{F'1} $\alpha'$ restricts to a contact form on $K':=\partial F'$, for which the closed Reeb orbits have period at least $1/2$.
\item \label{F'2} The first return time function 
$$\tau' :F'\setminus \partial F' \rightarrow (0,+\infty)$$
of the Reeb flow of $\alpha'$ extends to a smooth function on $F'$ and 
the corresponding first return map 
$$\varphi' :F'\setminus \partial F' \rightarrow F'\setminus \partial F'$$
extends to a diffeomorphism of $F'$.   
\item \label{F'3} There exists an open subset $U'\subset F'$ such that the support 
of $\varphi'$ is contained in $F'\setminus U'$ and $\tau' \equiv 1$ on $U$. 
\item \label{F'4}  $$||\tau'-1||_\infty <\min \{\epsilon',\frac{\epsilon'}{{\rm vol}(K', \alpha')}\}.$$ 
\item \label{F'5} $$\int _{F'\setminus U'}(d\alpha')^{n-1}<\epsilon'.$$ 
\end{enumerate}
For later convenience, we define the contact form 
\begin{eqnarray}\label{alpha_0}
\alpha_0:=2\pi \alpha'
\end{eqnarray}
on $K$. 

Second we need a nice parametrization of $F$ near $K$. By Lemma \ref{nearKlemma}, we have an embedding 
\begin{eqnarray}\label{lambdanearK}
\imath:[0,R]\times K \rightarrow F;\; \imath^*\lambda=\alpha_0/r,\;r\neq 0
\end{eqnarray}
and $\imath(0,q)=q$ for all $q\in K$.  

Following the line of arguments in \cite{general}, we want to construct the desired contact form on the open book defined by the pair $(F,\phi)$ using a Liouville form on $F^o$, which has a particular behaviour near K. We cook up such a Liouville form out of $\lambda$ as follows. 
Since $\phi={\rm id}$ near $K$, there is some $\rho'>0$ such that $\rho'\leq R$ and  
\begin{eqnarray}\label{phiisid}
\phi={\rm id} \textrm{ on }[0,\rho']\times K.
\end{eqnarray}
Then there exist numbers $C>0$, $\rho<\rho'$ and a smooth function 
$$v:[0,\rho']\rightarrow (0,+\infty)$$
such that the followings hold
\begin{enumerate}[label=(v\arabic*)]
\item \label{defnofv1} $v(r)=\frac{1}{r}$ near  $\rho'$
\item \label{defnofv2} $v(r)=C(1-r^2)$  for $r\leq \rho$
\item \label{defnofv3} $v'(r)<0$  for $r\in(0,\rho']$. 
\end{enumerate}
Finally we define the 1-form $\eta$ on $F$ via 
\begin{eqnarray}\label{eta}
\eta= \left\{
\begin{array}{ll}
       \lambda/C &\textrm{on }\; ([0,\rho']\times K)^c \\
       v(r)\alpha_0/C &\textrm{ on }\; [0,\rho']\times K\\
\end{array} 
\right. 
\end{eqnarray}
By \ref{defnofv1}, $\eta$ is a smooth 1-form and we note that on $(0,\rho')\times K$, 
$$Cd\eta=v'dr\wedge \alpha_0+vd\alpha_0\;\Rightarrow C^n(d\eta)^n=(n-1)v'v^{n-1}dr\wedge\alpha_0\wedge (d\alpha_0)^{n-1}.$$
Hence by \ref{defnofv3}, $d\eta$ is symplectic on $(0,\rho')\times K$ and therefore on $F\setminus K$.

Now we are ready to construct the open book associated to the pair $(F,\phi)$ and the required contact form. We first consider the mapping torus 
$$MT(F,\phi):=([0,2\pi]\times F )\big/\sim \,;\; (2\pi,q)\sim (0,\phi(q)).$$
Via (\ref{lambdanearK}) and (\ref{phiisid}), we have a neighborhood
$W$ of $\partial MT(F,\phi)$ in $MT(F,\phi)$ with the coordinates 
\begin{eqnarray}\label{coordinatesinW}
(x,q,r)\in W:=\mathbb{R}/2\pi\mathbb{Z}\times K\times [0,\rho]
\end{eqnarray}
 and the open book is given by
\begin{eqnarray}\label{OB}
OB(F,\phi):=\big(MT(F,\phi)\sqcup K\times \rho\mathbb{D}\big)\big/\sim\,; (x,q,r)\sim (q,re^{ix}).
\end{eqnarray}
Then the set 
$$W':=K\times \rho \mathbb{D}$$
is a compact neighbourhood of $K=\{0\}\times K$ in $OB(F,\phi)$, containing $W\setminus \partial MT(F,\phi)$.

On $[0,2\pi]\times F$, we define a 
family of 1-forms 
\begin{eqnarray}\label{alphatilda_s}
\tilde{\alpha}_s=dx+s\big((1-\beta(x))\eta+\beta(x)\phi^*\eta\big)
\end{eqnarray}
where $s$ is a positive real parameter and $\beta:[0,2\pi]\rightarrow [0,1]$ is 
a smooth function such that $\beta(0)=0$, $\beta(2\pi)=1$ and ${\rm supp}
(\beta')\subset (0,2\pi)$. By the 
choice of $\beta$, $\tilde{\alpha}_s$ descends to a family of 1-forms on 
$MT(F,\phi)\setminus \partial MT(F,\phi)$ and equivalently on $OB(F,\phi)\setminus K$.
We note that on $W$, (\ref{alphatilda_s}) reads as
\begin{eqnarray}\label{alphatilda_sonW}
\tilde{\alpha}_s=dx+s(1-r^2)\alpha_0.
\end{eqnarray}
We fix $\delta>0$ and as in the proof of Proposition 1 in \cite{general}, there exists $s_1>0$ depending on $\rho$ and $\delta$ so that the following holds. 
For any $s\in(0,s_1)$, there is a curve 
$$\gamma:[0,\rho]\rightarrow \mathbb{C},\; \gamma(r)=f(r)+ig(r)$$
with $f,g\geq 0$ satisfying
\begin{enumerate}[label=\text{(g\arabic*)}]
\item\label{A1} $\gamma(r)=1+is(1-r^2)$ on $[r_1,\rho]$, for some $r_1\in (0,\rho)$.
\item \label{A2} $g'<0$ on $(0,\rho]$.
\item \label{A3}  There exits some $r_0\in (0,r_1)$ such that 
$$\gamma (r)=r^2+i(1+\delta-r^2)$$
for $r\in[0,r_0]$ and 
$$g(r_0)-g(\rho)=g(r_0)-s(1-\rho^2)\leq 2\delta.$$
\item\label{A4} $\frac{g'f-gf'}{g^2+f^2}<0$ on $(0,\rho]$.
\item\label{A5} $\frac{g''f'-f''g'}{(g')^2+(f')^2}\leq 0$ on $[0,\rho]$.
\end{enumerate}
Now we define a family of smooth 1-forms on $OB(F,\phi)$ by
\begin{eqnarray}\label{alpha_sgeneral}
\alpha_s=   \left\{
\begin{array}{ll}
      \frac{\tilde{\alpha}_s}{2\pi (1+\delta)} &\textrm{ on } OB(F,\phi)\setminus W'\\
      \\
      \frac{f(r)dx+g(r)\alpha_0}{2\pi (1+\delta)} &\textrm{ on } W'\\
\end{array} 
\right.
\end{eqnarray}
By \ref{A3} one has 
$$\alpha_s=\frac{r^2dx+(1+\delta-r^2)\alpha_0}{2\pi (1+\delta)},$$
near $K\subset W'$ 
so that $\alpha_s$ is a smooth 1-form on $W'$ for all $s\in (0,s_1)$ and by (\ref{alpha_0}),
\begin{eqnarray}\label{alpha_sonK}
 \alpha_s=\frac{1}{2\pi}\alpha_0=\alpha'\; \textrm{ on}\; K.
\end{eqnarray}
We note that for $\epsilon'<1/2$, any closed Reeb orbit of $\alpha'$ that passes through $F'\setminus K'$ has period at least $1/2$ due to \ref{F'4}. Moreover, by \ref{F'1}, any closed Reeb orbit of $\alpha'$ that is contained in the contact submanifold $K'\subset K$ has period at least $1/2$. Hence for all $s\in (0,s_1)$ the condition \ref{F1} is satisfied for $\alpha_s$ if $\epsilon'<1/2$. 

\begin{lem}\label{lemcontact} There exists $s_2\in (0,s_1)$, depending on $\delta, \rho, \phi,\eta, \beta$ such that 
$\alpha_s$ is a contact form on $OB(F,\phi)$ for all $s\in (0,s_2)$.
\end{lem}
\begin{proof} We first check the statement on $W'$. We compute
\begin{eqnarray*}
\alpha_s\wedge (d\alpha_s)^{n}
&=&\frac{1}{(2\pi(1+\delta))^{n+1}}\big[(f dx+g\alpha_0)\wedge(f'dr\wedge dx+ g'dr\wedge \alpha_0 + gd\alpha_0)^n \big]\\
&=&\frac{(n-1)g^{n-1}}{(2\pi(1+\delta))^{n+1}}\big[(f dx+g\alpha_0)\wedge\big(f'dr\wedge dx\wedge(d\alpha_0)^{n-1}\\
&& +g'dr\wedge d\alpha_0\wedge(d\alpha_0)^{n-1}\big)\big]\\
&=&\frac{(n-1)g^{n-1}(fg'-gf')}{(2\pi(1+\delta))^{n+1}}\big( dx\wedge dr\wedge\alpha_0 \wedge (d\alpha_0)^{n-1}\big)\\
&=&\frac{(n-1)g^{n-1}(f'g-fg')}{r(2\pi(1+\delta))^{n+1}}\big( (r dr\wedge dx)\wedge \alpha_0 \wedge (d\alpha_0)^{n-1}\big)
\end{eqnarray*}
Now for $r\neq 0$, $g>0$ and by \ref{A4} we get
$$\frac{(n-1)g^{n-1}(f'g-fg')}{r(2\pi(1+\delta))^{n+1}}>0.$$
For $r$ close to 0, by \ref{A3}, we get 
\begin{eqnarray*}
\frac{(n-1)g^{n-1}(f'g-fg')}{r(2\pi(1+\delta))^{n+1}}
&=&\frac{(n-1)g^{n-1}(2r(1+\delta-r^2)-r^2(-2r))}{r(2\pi(1+\delta))^{n+1}}\\
&=&\frac{(n-1)g^{n-1}2r(1+\delta)}{r(2\pi(1+\delta))^{n+1}}\\
&=&\frac{2(n-1)(1+\delta-r^2)^{n-1}}{(2\pi)^{n+1}(1+\delta)^{n}}.
\end{eqnarray*}
We note that the limit of this expression is positive as $r$ tends to 0. Hence $\alpha_s$ is a positive contact form for all $s\in (0,s_1)$.

Next we consider $OB(F,\phi)\setminus W'$. We have
\begin{eqnarray*}
2\pi (1+\delta)\,d\alpha_s
&=&s\big(-\beta'dx\wedge\eta+(1-\beta)d\eta+ \beta'dx\wedge \phi^*\eta+\beta \phi^*d\eta\big)\\
&=&s\big(\beta'dx\wedge(h^*\eta-\eta)+(1-\beta)d\eta +\beta d\eta\big)\\
&=&s \big(\beta'dx\wedge \eta_\Delta+d\eta\big)
\end{eqnarray*}
where $\eta_\Delta:=\phi^*\eta-\eta$. The second equation follows from the fact that $\phi$ is a symplectomorphism. We get 
$$(2\pi (1+\delta))^n\,(d\alpha_s)^n=s^n\big((n-1)\beta'dx\wedge \eta_\Delta\wedge (d\eta)^{n-1}+(d\eta)^n\big).$$
Hence up to a positive constant we have
\begin{eqnarray*}
\frac{\alpha_s\wedge (d\alpha_s)^{n}}{s^n}
&=&\big[dx+s\big(\eta+\beta\eta_\Delta)\big]\wedge\big[(n-1)\beta'dx\wedge \eta_\Delta\wedge (d\eta)^{n-1}+(d\eta)^n\big]\\
&=&dx\wedge (d\eta)^n+s(n-1)\beta'\eta \wedge dx\wedge \eta_\Delta\wedge (d\eta)^{n-1}.
\end{eqnarray*}
Now since $dx\wedge (d\eta)^n$ is a volume form, we can choose $s_2\in (0,s_1)$ small enough so that 
$\alpha_s$ is a contact form on $OB(F\phi)\setminus W'$ for all $s\in (0,s_2)$. 
\end{proof}

After identifying $V$ with the abstract open book $OB(F,\phi)$, for each $s\in (0,s_2)$, we have two contact forms on $OB(F,\phi)$, namely the one induced by $\alpha$ on $(V,\xi)$, still denoted by $\alpha$ and $\alpha_s$ defined above. We note that the statement of the proposition is invariant under a diffeomorphism, hence it is enough to show that $F\subset OB(F,\phi)$ has the following property: given $\epsilon>0$, there is a contact form $\alpha_s$ such that the statements \ref{F1}-\ref{F5} hold for $\alpha_s$ and $\ker \alpha_s$ isotopic to $\ker \alpha$.

In order to relate $\ker\alpha_s$ to $\ker \alpha$,  we first want to show that the obvious open book structure on $OB(F,\phi)$ is a Liouville open book with the binding form $\alpha_s$. Let 
$$\hat{\Theta}: OB(F,\phi)\setminus K \rightarrow S^1$$
be the fibration induced by the projection $MT(F,\phi)\rightarrow S^1$. We pick a suitable defining function $\hat{h}$ as follows. We define a smooth function
$$\hat{u}:F\rightarrow [0,\infty)$$
such that for some suitably chosen $b>0$,
\begin{enumerate}[label=\textrm{(\^{u}\arabic*)}]
\item \label{B1} $\hat{u}(r,q)=r$ for $(r,q)\in [0,\rho]\times K$,
\item \label{B2} $\hat{u}\equiv b$ on $([0,\rho')\times K)^c$,
\item \label{B3} $\hat{u}$ depends only on $r$ and $\hat{u}_r\geq 0$ on $[0,\rho']\times K$.
\end{enumerate}

We note that on ${\rm supp}(\phi)$, $\hat{u}$ is constant. Hence the $S^1$-equivariant extension of $\hat{u}$ is a well-defined smooth function on $MT(F,\phi)$, which constitutes the function $|\hat{h}|$, and as it is a linear function of $r$ near $K$, pairing $|\hat{h}|$ with $\hat{\Theta}$ leads to a well-defined defining function $\hat{h}$ for the open book $(K,\hat{\Theta})$ on $OB(F,\phi)$. 
\begin{lem} \label{alpha_sbinds} For any $s\in(0,s_2)$ , $d(\alpha_s/|\hat{h}|)$ induces an ideal Liouville structure on each fibre of $\hat{\Theta}$.
\end{lem}
\begin{proof} We put
\begin{eqnarray}\label{lambda_s}
\lambda_x^s:=(\alpha_s/|\hat{h}|)_{|TF^o_x}
\end{eqnarray} 
where $F_x=\hat{\Theta}^{-1}(x)$. We study $\lambda_x^s$ on pieces of $F_x$ separately. 
\begin{itemize}
\item \underline{On $\{x\}\times (0,\rho]\times K$}: by \ref{B1}, we have 
\begin{eqnarray}\label{lambda_sbeforerho}
\lambda^s_x=\frac{g(r)\alpha_0}{2\pi(1+\delta)r}.
\end{eqnarray}
Hence up to  positive constants, we get 
$$d\lambda_x^s=\frac{g'r-g}{r^2}\,dr\wedge \alpha_0 +\frac{g}{r}\,d\alpha_0$$
$$\Rightarrow\; (d\lambda^s_x)^n=(n-1)g^{n-1}\frac{g'r-g}{r^{n+1}}\,dr\wedge \alpha_0\wedge(d\alpha_0)^{n-1}.$$
We note that due to the parametrization (\ref{lambdanearK}), 
$dr\wedge \alpha_0\wedge(d\alpha_0)^{n-1}$ is a negative volume form.  By \ref{A2}, $g'<0$ and $g'r-g<0$ since $g\geq 0$. Hence $d\lambda_x^s$ is a positive symplectic form for all $s$.
\newline
\item \underline{On $\{x\}\times (\rho,\rho')\times K$}: we note that $\phi={\rm id}$ on this set. Hence, up to a positive constant, we have
\begin{eqnarray}\label{lambda_sbetweenrhoandrho'}
\lambda^s_x=s\frac{\eta}{\hat{u}}
\end{eqnarray}
By \ref{B3}, we have
$$s^{-1}d\lambda^s_x=-\frac{1}{\hat{u}^2}\hat{u}_r\,dr\wedge\eta+\frac{1}{\hat{u}}\,d\eta$$
$$\Rightarrow\; s^{-n}(d\lambda^s_x)^n=-\frac{(n-1)}{\hat{u}^{n+1}}\hat{u}_r\,dr\wedge\eta\wedge (d\eta)^{n-1}+\frac{1}{\hat{u}^n}\,(d\eta)^n.$$
We also note that
$$\eta=\frac{v}{C}\alpha_0\;\Rightarrow\; C(d\eta)=v'\,dr\wedge\alpha_0+v\,d\alpha_0$$
$$\Rightarrow\;C^{n-1}(d\eta)^{n-1}=(n-2)v^{n-2}v'\, dr\wedge\alpha_0\wedge(d\alpha_0)^{n-2}+v^{n-1}\,(d\alpha_0)^{n-1},$$
$$\Rightarrow\;C^{n}(d\eta)^{n}=(n-1)v^{n-1}v'\, dr\wedge\alpha_0\wedge(d\alpha_0)^{n-1},$$
$$\Rightarrow\;C^{n}\,\eta\wedge(d\eta)^{n-1}=v^{n}\,\alpha_0\wedge (d\alpha_0)^{n-1}.$$
Combining all these computations, we get
$$s^{-n}C^n\,(d\lambda^s_x)^n=-\frac{(n-1)}{\hat{u}^{n+1}}\hat{u}_rv^n\,dr\wedge\alpha_0\wedge (d\alpha_0)^{n-1}+\frac{1}{\hat{u}^n}(n-1)v^{n-1}v'\, dr\wedge\alpha_0\wedge(d\alpha_0)^{n-1}$$
$$\Rightarrow\;s^{-n}C^n\,(d\lambda^s_x)^n=\bigg[-\frac{(n-1)}{\hat{u}^{n+1}}\hat{u}_rv^n+\frac{1}{\hat{u}^n}(n-1)v^{n-1}v'\bigg]
\, dr\wedge\alpha_0\wedge(d\alpha_0)^{n-1}.$$
By \ref{defnofv3}, $v'<0$ and by \ref{B2}, $\hat{u}\geq 0$. Hence $\lambda^s_x$ is a 
positive symplectic form.
\newline
\item \underline{On $\{x\}\times ((0,\rho')\times K)^c$}: we have $\hat{u}\equiv b$ by \ref{B2}. Hence 
\begin{eqnarray}\label{lambda_safterrho'}
\lambda^s_x=s\big[(1-\beta(x))\frac{\eta}{b}+\beta(x)\frac{\phi^*\eta}{b}\big]\;\Rightarrow\;d\lambda^s_x=s\frac{d\eta}{b}.
\end{eqnarray}
Since $d\lambda^s_x$ coincides with $d\lambda$ up to a positive constant, it is symplectic for all $s$.
\end{itemize} 
\end{proof}
Now we are in the following situation. On $OB(F,\phi)$, we have the Liouville open book
\begin{eqnarray}\label{LOB1}
(K,\hat{\Theta},d(\alpha/|h|)_{|TF^0_x}),
\end{eqnarray}
which is symplectically supported by the contact structure $\ker \alpha$ and for any $s\in (0,s_2)$ we have the second Liouville open book
\begin{eqnarray}\label{LOB2}
(K,\hat{\Theta},d(\alpha_s/|\hat{h}|)_{|TF^0_x}),
\end{eqnarray}
which is symplectically supported by the contact structure $\ker \alpha_s$. We want to show that 
there exists a diffeomorphism 
\begin{eqnarray}\label{Phi}
\Phi: OB(F,\phi)\rightarrow OB(F,\phi)
\end{eqnarray}
such that $\Phi\circ \hat{\Theta}=\hat{\Theta}\circ \Phi$ and the restriction of $\Phi$ to each fibre is symplectic, that is, for all $x\in S^1$,
$$\Phi^*d(\alpha_s/|\hat{h}|)_{|TF^0_x}=d(\alpha/|h|)_{|TF^0_x}.$$
If such a diffeomorphism exists, then  $\ker (\Phi_*\alpha_s)$ and $\ker \alpha$ are two contact structures on $OB(F,\phi)$, which symplectically support the Liouville open book (\ref{LOB1}). Hence they are isotopic  by Proposition \ref{prop21}. Then it is enough to establish the dynamical properties given by \ref{F1}-\ref{F5}  for $F$ and $\alpha_s$. Now our task is to construct the diffeomorphism $\Phi$. 
\begin{lem} Let $s\in (0,s_2)$ be fixed. Then there exists a diffeomorphism 
\begin{eqnarray}
\Phi: OB(F,\phi)\rightarrow OB(F,\phi)
\end{eqnarray}
such that $\Phi\circ \hat{\Theta}=\hat{\Theta}\circ \Phi$ and the restriction of $\Phi$ to each fibre is symplectic, that is, for all $x\in S^1$,
$$\Phi^*d(\alpha_s/|\hat{h}|)_{|TF^0_x}=d(\alpha/|h|)_{|TF^0_x}.$$
\end{lem}
\begin{proof}
We first take a closer look at the setting. By the equations (\ref{lambda_sbeforerho}), (\ref{lambda_sbetweenrhoandrho'}) and (\ref{lambda_safterrho'}), the ideal Liouville structures $(d(\alpha_s/|\hat{h}|)_{|TF^0_x})_{x\in S^1}$ may be identified with the ideal Liouville structure on the 0-th page, namely
\begin{equation}\label{omega^s}
\omega^s:=d(\alpha_s/|\hat{h}|)_{|TF^0}.
\end{equation}
Similarly, the symplectic form 
\begin{equation}\label{omega}
\omega:=d\lambda=d(\alpha/|h|)_{|TF^0}
\end{equation}
gives the ideal Liouville structre on each page $\{x\}\times F\cong F$,
after identifying $V$ with $OB(F,\phi)$ via the flow of the symplectically spinning vector field $X$.  

We first show that 
$$\omega_t:=(1-t)\omega+t\omega^s$$
is symplectic on $F^o$ for all $t\in [0,1]$. In fact, we claim that
\begin{eqnarray}\label{lambda_t}
\lambda_t=(1-t)\lambda+t\lambda^s,\;t\in [0,1]
\end{eqnarray}
is a Liouville form on $F^o$ for all $t$, where $\lambda$ is the primitive of $\omega$ given by (\ref{lambda}) and  $\lambda^s$ is the primitive of $\omega^s$ given by (\ref{lambda_s}).
Again we compute $d\lambda_t$ on separate pieces of $F^o$. 
\begin{itemize}
\item \underline{On $(0,\rho]\times K$}: we have 
$$\lambda_t=(1-t)\frac{\alpha_0}{r}+t\frac{g(r)\alpha_0}{2\pi(1+\delta)\hat{u}(r)}=\big[(1-t)\frac{1}{r}+t\frac{g}{2\pi(1+\delta)\hat{u}}\big]\alpha_0=:\kappa(r)\alpha_0$$
$$\Rightarrow\; d(\lambda_t)=\kappa'\,dr\wedge \alpha_0+\kappa\,d\alpha_0$$
$$\Rightarrow\; (d\lambda_t)^n=(n-1)\kappa^{n-1}\kappa'\,dr\wedge \alpha_0\wedge (d\alpha_0)^{n-1}.$$
Due to the parametrization (\ref{lambdanearK}), $dr\wedge \alpha_0\wedge (d\alpha_0)^{n-1}$ is a negative volume form. Hence $d\lambda_t$ is a positive symplectic form if and only if $\kappa'<0$.
By \ref{B1} we have
$$\kappa=(1-t)\frac{1}{r}+t\frac{g(r)}{2\pi (1+\delta)r}$$
$$\Rightarrow\; \kappa'=-(1-t)\frac{1}{r^2}+\frac{t}{2\pi (1+\delta)r^2}(rg'-g)>0$$
since $g'<0$ on $(0,\rho]$.
\newline
\item \underline{On $(\rho,\rho']\times K$}: (\ref{lambda_t}) reads as
$$\lambda_t=(1-t)\frac{1}{r}\alpha_0+t\frac{sv}{C2\pi(1+\delta)\hat{u}}\alpha_0=\big[(1-t)\frac{1}{r}+t\frac{sv}{C2\pi(1+\delta)\hat{u}}\big]\alpha_0=:\kappa(r)\alpha_0$$
We have
$$\kappa'=-(1-t)\frac{1}{r^2}+\frac{st}{C2\pi (1+\delta)\hat{u}^2}(\hat{u}v'-v\hat{u}').$$
By \ref{defnofv3} and \ref{B3}, $\hat{u}v'-v\hat{u}'>0$ and therefore $\kappa'<0$.
\newline
\item \underline{On $((0,\rho']\times K)^c$}: by (\ref{lambda_safterrho'}), we have
$$\lambda_t=(1-t)\lambda+ ts\frac{\eta}{b}=\big[(1-t)+\frac{ts}{bC}\big]\lambda.$$
Since $d\lambda$ is positive symplectic and  
$$(1-t)+\frac{ts}{bC}>0,$$
$d\lambda_t$ is a positive symplectic form for all $s$ and $t$.
\end{itemize}
Hence $\omega_t=d\lambda_t$ is symplectic on $F^o$ for all $t\in [0,1]$.

We note that for each $t\in[0,1]$, $u\lambda_t$ extends to $F$, the extension being the smooth $1$-form 
$$(1-t)\alpha_{|TF}+t\frac{u}{\hat{u}}(\alpha_s)_{|TF},$$
where the function $u$ on $F$ is given by $|h|_{|F}$ and $h$ is the initial defining function for $(K,\Theta)$. After reparametrizing $[0,1]$ as $[0,2\pi]$, we get a smooth path of ideal Liouville structures $(\omega_x)_{x\in [0,2\pi]}$ in the sense of Lemma \ref{lem6} so that $\omega_0=\omega$ and $\omega_{2\pi}=\omega_s$.
It is also clear that for each $x\in [0,2\pi]$, the boundary contact structure associated to $\omega_x$ is $\ker \alpha_0$. Hence by Lemma \ref{lem6}, there exists a smooth isotopy $(\psi_x)_{x\in [0,2\pi]}$ of $F$ such that 
\begin{enumerate}[label=\text{(i\arabic*)}]
\item\label{psi1} $\psi_0={\rm id}$;
\item \label{psi2} $\psi_x={\rm id}$ on $K$ for all $x\in [0,2\pi
]$;
\item \label{psi3} $\psi_x^*\omega_x=\omega_0=\omega$ for all $x\in [0,2\pi]$.
\end{enumerate}
Now we define $\Phi:[0,2\pi]\times F\rightarrow [0,2\pi]\times F$
\begin{eqnarray}\label{def_Psi}
\Phi(x,p):=(x,\psi_{2\pi}\circ\psi^{-1}_x\circ \phi^{-1}\circ \psi_x(p)).
\end{eqnarray}
We note that 
$$\Phi(2\pi,p)=(2\pi,\psi_{2\pi}\circ \psi^{-1}_{2\pi} \phi^{-1}\circ\psi_{2\pi}(p))=(2\pi, \phi^{-1}\circ\psi_{2\pi}(p)),$$
and by \ref{psi1}, 
$$\Phi(0,\phi(p))=(0,\psi_{2\pi}\circ\psi^{-1}_0\circ \phi^{-1}\circ \psi_0(\phi(p)))=(0,\psi_{2\pi}(p))=(0,\phi(\phi^{-1}\circ\psi_{2\pi}(p))).$$
Hence $\Phi$ descends to a smooth function on $MT(F,\phi)$. Since $\phi={\rm id}$ on $K$, by \ref{psi2}, $\Phi={\rm id}$ on $\partial MT(F,\phi)$. Hence $\Phi$ descends to a diffeomorphism on $OB(F,\phi)$. By definition $\Phi$ commutes with $\hat{\Theta}$. It is clear that $\phi^*\omega_x=\omega_x$ for all $x\in [0,2\pi]$. Hence by \ref{psi3}, we get
\begin{eqnarray*}
\Phi^*(\alpha_s/|\hat{h}|)_{|TF^0_x}
&=&\Phi^* \omega^s\\
&=&\big(\psi_{2\pi}\circ\psi^{-1}_x\circ \phi^{-1}\circ \psi_x\big)^*\omega^s\\
&=&\psi_x^*(\phi^{-1})^*(\psi^{-1}_x)^*\psi_{2\pi}^*\omega_{2\pi}\\
&=&\psi_x^*(\phi^{-1})^*(\psi^{-1}_x)^*\omega_0\\
&=&\psi_x^*(\phi^{-1})^*\omega_x\\
&=&\psi_x^*\omega_x\\
&=&\omega_0=\omega=d(\alpha/|h|)_{|TF^0_x}.
\end{eqnarray*} 
\end{proof}

Now our task is to establish that for a given $\epsilon>0$ the statements \ref{F1}-\ref{F5} hold for some $\alpha_s$. To this end we first study the Reeb vector field $R_s$ of $\alpha_s$ on $OB(F,\phi)\setminus W'$. We know that 
$d\eta$ is well-defined on $OB(\phi)\setminus W'$ and restricts to a symplectic form on each fibre $F^0_x$. We define 
the vector field $Y$ on $OB(\phi)\setminus W'$ so that it is tangent to $F_x$ for each $x$ and satisfies 
$$\imath_Yd\eta= -\beta'\eta_\Delta$$
along $F_x$ for each $x$.
Since ${\rm supp} (\phi)\subset ([0,\rho]\times K)^c$, $Y$ is compactly supported in $OB(\phi)\setminus W'$. We note that 
\begin{eqnarray*}
 2\pi(1+\delta)\imath_{(\partial_x+Y)} d\alpha_s
 &=&s \big(\imath_{(\partial_x+Y)}\beta'dx\wedge \eta_\Delta+\imath_{(\partial_x+Y)}d\eta\big)\\
 &=& s \big(\beta'dx(\partial_x+Y)\eta_\Delta-\eta_\Delta(\partial_x+Y)\beta'dx 
 +\imath_Y d\eta\big)\\
 &=&s(\beta'\eta_\Delta+d\eta (Y,Y)\beta' dx- \beta'\eta_\Delta)\\
 &=&0.
\end{eqnarray*}
Hence on $OB(\phi)\setminus W'$, the Reeb vector field of $\alpha_s$ reads as
\begin{eqnarray}\label{ReebsonW'comp}
 R_s=\frac{\partial_x+Y}{\alpha_s(\partial_x+Y)}.
\end{eqnarray}
We note that since $Y$ is supported in $OB(\phi)\setminus W'$ and $OB(\phi)\setminus W'$ is invariant under the flow of 
$\partial_x$, $OB(\phi)\setminus W'$ is invariant under the flow of $R_s$ and hence $W'$ is invariant as well. Moreover the $\partial_x$ component of $R_s$  never vanishes and since 
$Y$ is tangent $F_x$, $R_s$ is transverse to $F_x\setminus W'$ for all $x$. 

On $W'$, by (\ref{alpha_sgeneral}) we have
\begin{eqnarray}\label{ReebsonW'}
 R_s=2\pi(1+\delta)\frac{f'R_0-g'\partial_x}{f'g-fg'},
\end{eqnarray}
where $R_0$ is the Reeb vector field of $\alpha_0$ on $K$. We note that since $g'<0$ on $(0,\rho]$, 
$\partial_x$ component of $R_s$ does not vanish hence $R_s$ is transverse to $F_x$ for all $x$. For 
$r$ close to 0, by \ref{A3} we get
\begin{eqnarray}\label{ReebsnearK}
 R_s=2\pi(R_0+\partial_x)
\end{eqnarray}
and as $\partial_x$ vanishes on $K$, 
\begin{eqnarray}\label{ReebsonK}
 R_s=2\pi R_0
\end{eqnarray}
on $K$. Since $R_s$ leaves $K$ invariant  and  have a non-vanishing $\partial_x$ component, $F=F_0$ is a global 
hypersurface of sections for the flow of $R_s$ and we have well-defined first return time function and first return map, namely
$$\tau:F\setminus K\rightarrow (0,\infty),\; \varphi:F\setminus K \rightarrow F\setminus K.$$
We compute the flow of $R_s$ on $W'\setminus K$ via solving the system
$$\dot r=0,\; \dot x=\frac{-g'2\pi(1+\delta)}{f'g-fg'},\; \dot q=\frac{f'2\pi(1+\delta)}{f'g-fg'}R_0(q)$$
in the coordinates given by (\ref{coordinatesinW}). The components of the flow reads as
$$r(t)=r,\; x(t)=x-\frac{g'2\pi(1+\delta)}{f'g-fg'}t,\; q(t)=\phi_{R_0}^{\frac{f'2\pi(1+\delta)}{f'g-fg'}t}(q)$$
where $\phi^s_{R_0}$ is the flow of $R_0$ on $K$. Hence we have
\begin{eqnarray}\label{tauandvarphionW'}
\tau(r,q)=\frac{-f'g+fg'}{g'(1+\delta)},\;\varphi(r,q)=\big(r,\phi_{R_0}^{-2\pi\frac{f'}{g'}}(q)\big).
\end{eqnarray}
Hence for $r\leq r_0$, by \ref{A3}, we have
\begin{eqnarray}\label{nearK}
\tau=1,\; \varphi(r,q)=\big(r,\phi_{R_0}^{2\pi}(q)\big). 
\end{eqnarray}
Hence $\tau$ and $\varphi$ extend smoothly to $K$, that is, the statement \ref{F2} is satisfied. We note that 
$$2\pi R_0=R'$$
where $R'$ is the Reeb vector field of $\alpha'$ on $K$. Now we invoke our assumptions on the contact form $\alpha'$ on $K$. By \ref{F'3}, we have a circle bundle over the open subset $U'\subset F'$, which is given by the closed Reeb orbits of $\alpha'$. Let $V'$ be the total space of this circle bundle. Then $V'$ is an open subset of $K$ and  $\phi_{R'}^{1}(q)=q$ for all $q\in V'$. Hence we have $\phi_{R_0}^{2\pi}(q)=q$ for all $q\in V'$. We define the open subset 
\begin{equation}\label{defnofU}
U:=(0,r_0)\times V'\subset F.
\end{equation} 
Then (\ref{nearK}) leads to
\begin{equation}\label{tauphionU}
\tau_{|U}=1,\; \varphi_{|U}={\rm id},
\end{equation}
which establishes \ref{F3}. 

Now we want to show that suitable choices of $\epsilon'$, $\delta$ and $s>0$ lead to \ref{F4} and \ref{F5}. 

We first observe the following.  
\begin{lem}\label{taubound} There exists $s_3\in (0,s_2)$ such that for all $s\in (0,s_3)$, 
$$||\tau-1||_\infty<2\delta.$$ 
\end{lem}
\begin{proof} By (\ref{tauandvarphionW'}), $\tau$ depends only on $r$ on $(0,\rho]\times K$ and by \ref{A5}, we have
$$\frac{\partial \tau}{\partial r}=\frac{g(f'g''-f''g')}{(g')^2(1+\delta)}\geq 0.$$
For any $s$, by \ref{A1} we have 
$$\tau=\frac{1}{1+\delta}$$
near $\rho$. Hence for all $s$
$$\frac{1}{1+\delta}\leq \tau\leq 1$$
on $[0,\rho]\times K$. Hence 
$$\sup_{[0,\rho]\times K}|\tau-1|\leq \frac{1}{1+\delta}-1\leq \delta$$
for any $s\in (0,s_2)$.

We note that on $OB(F,\phi)\setminus W'$, we get
$$dx(R_s)=\frac{1}{\alpha_s(\partial_x+Y)}$$
using (\ref{ReebsonW'comp}) and the fact that $Y$ is tangent to the fibres of $OB(F,\phi)$. We note that as $s\rightarrow 0$
$$\alpha_s(\partial_x+Y)=\frac{1}{2\pi(1+\delta)}\tilde{\alpha}_s(\partial_x+Y)=\frac{1}{2\pi(1+\delta)}\bigg(1+s\big((1-\beta)\eta(Y)+\beta(\phi^*\eta)(Y)\big)\bigg)$$
converges to $\frac{1}{2\pi(1+\delta)}$ uniformly on $OB(F,\phi)\setminus W'$  since $Y$ is compactly supported. Hence $\tau$ converges to $\frac{1}{(1+\delta)}$ uniformly as $s\rightarrow 0$. 
Combining this with the above estimates proves the lemma.
\end{proof}

Now we want to estimate the $d\alpha_s-$volume of the complement of $U$ in $F$. 
\begin{lem}\label{alphasareaafterrho}
There exists $s_4\in (0,s_3)$ such that for all $s\in(0,s_4)$,
$$\int_{([0,\rho)\times K)^c}(d\alpha_s)^n<2\delta.$$
\end{lem}
\begin{proof}
We recall from the proof of Lemma \ref{lemcontact} that on $([0,\rho)\times K)^c$,
$$2\pi (1+\delta)(d\alpha_s)_{|TF}=s \big(\beta'dx\wedge \eta_\Delta+d\eta\big)_{|TF}=sd\eta.$$
Hence  the integral 
$$\int_{([0,\rho)\times K)^c}(d\alpha_s)^n\searrow 0 \textrm{ as } s\rightarrow 0.$$
\end{proof}
Next we consider the region $[r_0,\rho]\times K$.
We have
$$2\pi (1+\delta)(d\alpha_s)_{|TF}=\big(f'\,dr\wedge dx+(g'dr\wedge\alpha_0+g\,d\alpha_0)\big)_{|TF}=g'dr\wedge\alpha_0+g\,d\alpha_0$$
$$\Rightarrow\; (2\pi (1+\delta))(d\alpha_s)^n_{|TF} =(n-1)g'g^{n-1}\,dr\wedge \alpha_0\wedge (d\alpha_0)^{n-1}.$$
We note that $dr\wedge \alpha_0\wedge (d\alpha_0)^{n-1}$ is a negative volume form due to the parametrization (\ref{lambdanearK}). Hence 
\begin{eqnarray*}
\int_{([r_0,\rho)\times K)}(d\alpha_s)^n
&=&\frac{n-1}{(2\pi)^n(1+\delta)^n}\int_{([r_0,\rho)\times K)}g'g^{n-1}\,dr\wedge \alpha_0\wedge (d\alpha_0)^{n-1}\\
&=&\frac{n-1}{(2\pi)^n(1+\delta)^n}{\rm vol}(K,\alpha_0)\int_{r_0}^\rho (-g'g^{n-1})\,dr\\
&\leq & \frac{n-1}{(2\pi)^n(1+\delta)^n} {\rm vol}(K,\alpha_0)(1+\delta)^{n-1} \int_{r_0}^\rho (-g')\,dr\\
&=&\frac{n-1}{(2\pi)^n (1+\delta)} {\rm vol}(K,\alpha_0) (g(r_0)-g(\rho))\\
&\leq & \frac{n-1}{(2\pi)^n} {\rm  vol}(K,\alpha_0)2\delta\\
&=&(n-1){\rm vol}(K,\alpha')2\delta.
\end{eqnarray*}
The remaining piece of $F$ is the subset $(0,r_0)\times (K\setminus V')$. We note that 
\begin{eqnarray*}
\int_{(0,r_0)\times (K\setminus V')} (d\alpha_s)^n
&=&\frac{1}{(2\pi)^n(1+\delta)^n}\int_{(0,r_0)\times (K\setminus V')}(2r\,dr\wedge dx-2r\,dr\wedge \alpha_0+(1+\delta-r^2)\,d\alpha_0)^n\\
&=& \frac{(n-1)}{(2\pi)^n(1+\delta)^n}\int_{(0,r_0)\times (K\setminus V')}-2r(1+\delta-r^2)^{n-1}\,dr\wedge\alpha_0\wedge (d\alpha)^{n-1}\\
&=& \frac{(n-1)}{(2\pi)^n(1+\delta)^n}{\rm vol}(K\setminus V',\alpha_0)\int_0^{r_0}2r(1+\delta-r^2)^{n-1}\,dr\\
&=& \frac{(n-1)}{n(2\pi)^n(1+\delta)^n }{\rm vol}(K\setminus V',\alpha_0) (1+\delta-r^2)^n\big|^0_{r_0}\\
&\leq &\frac{(n-1)}{n(2\pi)^n(1+\delta)^n }{\rm vol}(K\setminus V',\alpha_0) (1+\delta)^n\\
&\leq &\frac{{\rm vol}(K\setminus V',\alpha_0)}{(2\pi)^n}\\
&=&{\rm vol}(K\setminus V',\alpha'). 
\end{eqnarray*}
In order to estimate the above volume, we use \ref{F'4},  \ref{F'5} and get
\begin{eqnarray*}
{\rm  vol}(K\setminus V',\alpha')=\int _{F'\setminus U'}\tau'\, (d\alpha')^{n-1} \leq (1+\epsilon')\epsilon'
\end{eqnarray*}
and hence 
\begin{equation}
 \int_{(0,r_0)\times (K\setminus V')} (d\alpha_s)^n\leq 2\epsilon'
\end{equation}
if $\epsilon'<1/2$.

Now given $\epsilon>0$, we first choose $\epsilon'>0$ such that 
\begin{equation}\label{chooseepsilon'}
\epsilon'<\min\{\epsilon/6, 1/2\}.
\end{equation}
Then we get the contact form $\alpha'$ on $K$ satisfying \ref{F'1}-\ref{F'5} with respect to $\epsilon'$. Then using the definition (\ref{alpha_0}) and the parametrization (\ref{lambdanearK}) , we construct $\alpha_s$, where we pick $\delta>0$ satisfying 
\begin{equation}\label{choosedelta}
 \delta<\min\bigg\{\frac{\epsilon}{6(n-1) {\rm vol}(K,\alpha')},\frac{\epsilon}{6}\bigg\}.
\end{equation}
We note that once $\epsilon'>0$ is fixed, the 1-form $\alpha'$ on $K$ is fixed.  

By Lemma \ref{alphasareaafterrho}, there is some $s_4\in(0,s_3)$ such that for all $s\in (0,s_4)$,
\begin{eqnarray*}
\int_{F\setminus U} (d\alpha_s)^n
&\leq & \int_{([0,\rho)\times K)^c}(d\alpha_s)^n+\int_{([r_0,\rho)\times K)}(d\alpha_s)^n+\int_{(0,r_0)\times (K\setminus V')} (d\alpha_s)^n\\
&\leq & 2\delta+ (n-1) {\rm vol}(K,\alpha')2\delta+2\epsilon'\\
&<& \frac{\epsilon}{3}+\frac{\epsilon}{3}+\frac{\epsilon}{3}=\epsilon,
\end{eqnarray*} 
where the second line follows from the above estimates and the last line follows from (\ref{chooseepsilon'}) and (\ref{choosedelta}). 
This establishes \ref{F5}. 
We note that the estimate (\ref{choosedelta}) implies 
$$2\delta<\min\bigg\{\epsilon, \frac{\epsilon}{\,{\rm vol}(K,\alpha_s)}\bigg\}.$$
Hence from Lemma \ref{taubound}, it follows that for any  $s\in (0,s_4)$,
$$||\tau-1||_\infty<\min\bigg\{\epsilon, \frac{\epsilon}{\,{\rm vol}(K,\alpha_s)}\bigg\}.$$
Hence the statement \ref{F4} holds.
\end{proof}

Now we are ready to prove our main theorem. 
\begin{thm}\label{bigmanifolds}
Let $(V,\xi)$ be a closed connected contact manifold and let $C>0$ be given. Then there exists a contact form $\alpha$ on $V$ such that $\ker \alpha=\xi$ and 
$$\rho(V,\alpha)\geq C.$$
\end{thm}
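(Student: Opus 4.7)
The plan is to follow the strategy of the proof of Theorem \ref{bigspheres}: construct a contact form on $(V,\xi)$ whose Reeb flow is of Boothby--Wang type on a ``large'' portion of $V$, and then insert plugs from Lemma \ref{plug_any_primitive} into that portion to shrink the contact volume without creating short closed Reeb orbits. Fix $C>0$ and a small $\epsilon_0 \in (0,1/2)$ to be tuned later. The first step is to invoke Proposition \ref{niceform} with parameter $\epsilon_0$, which yields an embedded hypersurface $F \subset V$, a contact form $\alpha$ on $(V,\xi)$, and an open subset $U \subset F$ such that $F$ is a global section for the Reeb flow of $\alpha$, the first return time satisfies $\tau|_U \equiv 1$, the first return map satisfies $\varphi|_U = \mathrm{id}$, and the quantitative estimates \ref{F1}, \ref{F4}, \ref{F5} hold with respect to $\epsilon_0$.

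Because $\tau|_U \equiv 1$ and $\varphi|_U = \mathrm{id}$, the Reeb-saturation of $U$ is diffeomorphic to $\mathbb{R}/\mathbb{Z} \times U$ in such a way that $\alpha$ corresponds to $ds + \lambda_0$ for some primitive $\lambda_0$ of $(d\alpha)|_U$. Since $(d\alpha)|_U$ is a symplectic form on $U$, Darboux's theorem together with a standard symplectic packing argument provides, for any prescribed $\eta > 0$, finitely many disjoint symplectic embeddings $r_j\mathbb{B} \hookrightarrow U$ whose total $(d\alpha)^n$-mass exceeds $(1-\eta)\int_U (d\alpha)^n$. Each such embedding pulls $\lambda_0$ back to a primitive $\lambda_j'$ of $\omega$ on $r_j\mathbb{B}$, so that on the cylinder $\mathbb{R}/\mathbb{Z} \times r_j\mathbb{B}$ the contact form $\alpha$ reads $ds + \lambda_j'$. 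I would then apply Lemma \ref{plug_any_primitive} to $\lambda_j'$ with radius $r_j$ and some small parameter $\epsilon_j>0$ to obtain a contact form $\beta_j'$ on that cylinder that agrees with $ds+\lambda_j'$ near the boundary, has volume less than $\epsilon_j$, and whose closed Reeb orbits all have period at least $1$. Patching $\beta_j'$ in place of $\alpha$ on each cylinder yields a smooth contact form $\alpha_1$ on $V$; the isotopy property \ref{p'2} together with Gray stability (exactly as in the last step of the proof of Theorem \ref{bigspheres}) then produces a contact form for $\xi$ with the same systolic ratio as $\alpha_1$.

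It remains to estimate $T_{\min}(V,\alpha_1)$ from below and $\mathrm{vol}(V,\alpha_1)$ from above. Every closed orbit of $R_{\alpha_1}$ either avoids all cylinders (where the Reeb dynamics coincide with those of $\alpha$, and \ref{F1} together with the estimate $\sum_{j=0}^{k-1}\tau(\varphi^j(p)) \geq k(1-\epsilon_0)$ implied by \ref{F4} gives period at least $\min\{1/2,\,1-\epsilon_0\}=1/2$) or lies entirely in one cylinder (period $\geq 1$ by \ref{p'3}); orbits cannot cross a cylinder boundary because $R_{\alpha_1}=\partial_s$ there by \ref{p'1}. Hence $T_{\min}(V,\alpha_1)\geq 1/2$. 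Using $F$ as a global section and Fubini,
\begin{align*}
\mathrm{vol}(V,\alpha_1) &= \int_{F\setminus \bigsqcup_j r_j\mathbb{B}} \tau\,(d\alpha)^n \;+\; \sum_j \mathrm{vol}\bigl(\mathbb{R}/\mathbb{Z}\times r_j\mathbb{B},\,\beta_j'\bigr) \\
&\leq (1+\epsilon_0)\epsilon_0 \;+\; \eta \int_U (d\alpha)^n \;+\; \sum_j \epsilon_j,
\end{align*}
where I split the first integral as $F\setminus U$ versus $U\setminus \bigsqcup_j r_j\mathbb{B}$ and used $\tau|_U = 1$, the packing estimate, \ref{F4}, \ref{F5}, and the plug volume bound. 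Since $\int_U(d\alpha)^n$ is a fixed finite number (as $F$ is compact), choosing $\eta$, $\epsilon_0$ and $\sum_j\epsilon_j$ all sufficiently small renders this upper bound smaller than $(1/2)^{n+1}/C$, which yields $\rho(V,\alpha_1)\geq C$.

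The main obstacle is the clean identification of the Reeb-saturation of $U$ with $\mathbb{R}/\mathbb{Z}\times U$ carrying $\alpha$ to $ds+\lambda_0$, and the verification that each plug insertion is genuinely local and leaves the exterior Reeb dynamics completely untouched. Both points rest on the dictionary between Reeb orbits and symplectic first-return data developed in Section 2 and on the boundary collar property \ref{p'1} of the plug; once these are in place, the period and volume estimates above become routine bookkeeping.
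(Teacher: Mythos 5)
Your argument is correct and follows essentially the same route as the paper's proof: invoke Proposition \ref{niceform}, pack $U$ with disjoint symplectic balls, insert the plugs from Lemma \ref{plug_any_primitive} via the Reeb-flow trivialization provided by \ref{F2}--\ref{F3}, then combine the period bounds from \ref{F1}, \ref{F4}, and \ref{p'3} with the volume estimates from \ref{F4}, \ref{F5}, and the plug volume bound. The only cosmetic difference is that you identify the full Reeb saturation of $U$ with $\mathbb{R}/\mathbb{Z}\times U$ at once before packing, whereas the paper constructs the tube $\Psi_j$ separately for each ball; the resulting bookkeeping and constants match up to trivial rearrangement.
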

\begin{proof}
Let $F\subset V$ be the hypersurface  with $\partial F=K$ given in Proposition \ref{niceform} and let $\epsilon>0$ be given. Then there is a contact form $\alpha$ on $(V,\xi)$ such that the statements \ref{F1}-\ref{F5} hold. Let $\dim V=2m+1$.

We put
$$c:=\int_F(d\alpha)^m=\int_K\alpha \wedge (d\alpha)^{m-1},\;a:=\int_U(d\alpha)^m$$
so that by \ref{F5}, 
$$c-a<\epsilon.$$

Let $\epsilon'>0$ be given. We claim that there is a finite collection of smooth embeddings
$$\psi_j:r_j\mathbb{B}\rightarrow U,\; \psi_j^*\omega=d\alpha; \; j=1,...,k,$$
where $\mathbb{B}$ is the open unit ball of dimension $2m$ and $\omega$ is the standard symplectic form, such that the images of these embeddings are mutually disjoint and 
$$\sum_j {\rm vol}(r_j\mathbb{B},\omega)=(1-\epsilon')a.$$
We note that the open subset $U$ is precompact in $F$. So we can take a finite open cover of $U$ consisting of Darboux coordinate charts. After making this finite collection mutually disjoint, it is enough to fill each corresponding open subset of $\mathbb{R}^{2m}$ with balls of arbitrary center and radius so that the total volume is $(1-\epsilon')a$. 

Using \ref{F2} and \ref{F3}, we define the embeddings 
$$\Psi_j:\mathbb{R}/\mathbb{Z}\times r_j\mathbb{B}\rightarrow V$$
such that $\Psi_j(0,\cdot)=\psi_j$ and $\Psi_j^*R_\alpha=\partial_s$ where $s\in \mathbb{R}/\mathbb{Z}$. We claim that 
$$\alpha_j:=\Psi_j^*\alpha=\lambda_j+ds$$
where $\lambda_j$ is a primitive of $\omega$ on $r_j\mathbb{B}$. By construction, $\partial_s$ is the 
Reeb vector field of $\alpha_j$. Hence $\alpha_j$ is invariant under the translation along the 
$s$ coordinate and has the form
$\alpha_j=\lambda_j+uds$
where $\lambda_j$ is a 1-form on $r_j\mathbb{B}$ and $u$ is a function on $r_j\mathbb{B}$. Since $\alpha_j(\partial_s)=1$
one has $u\equiv 1$ and again by construction
$$\omega=\psi_j^*d\alpha=\Psi_j^*d\alpha|_{r_j\mathbb{B}\times \{0\}}=d\alpha_j|_{r_j\mathbb{B}\times \{0\}}=
d\lambda_j.$$
We denote the image of $\Psi_j$ by $W_j$. We have
\begin{eqnarray*}
 \sum _j {\rm vol}(W_j,\alpha)
 &=& \sum _j {\rm vol}(\mathbb{R}/\mathbb{Z}\times r_j\mathbb{B}, \alpha_j)\\
 &=& \sum _j {\rm vol}(\mathbb{R}/\mathbb{Z}\times r_j\mathbb{B}, \lambda_j+ds)\\
 &=& \sum _j {\rm vol}(r_j\mathbb{B},\omega)\\
 &=&(1-\epsilon')a.
\end{eqnarray*}
Using \ref{F4}, we get
$$\tau\leq 1+\frac{\epsilon}{c}$$
$$\Rightarrow\; {\rm vol}(V,\alpha)=\int_F\tau\,(d\alpha)^m\leq (1+\frac{\epsilon}{c})c=c+\epsilon.$$ 
Using the fact that $W_j$'s are disjoint, we get 
\begin{eqnarray*}
{\rm vol}(V\setminus \bigcup_jW_j,\alpha)
&=&{\rm vol}(V,\alpha)-\sum_j {\rm vol}(W_j,\alpha)\\
&\leq & (c+\epsilon)-(1-\epsilon')a \\
&=&(c-a)+\epsilon +\epsilon' a\\
&\leq &\epsilon +\epsilon  +\epsilon' a\\
&=&2\epsilon +\epsilon' a.
\end{eqnarray*}

Now for each $j$, we apply Lemma \ref{plug_any_primitive}, with $r=r_j$, $\lambda'=\lambda_j$ and 
get the contact form $\beta_j$ on $r_j\mathbb{B}\times \mathbb{R}/\mathbb{Z}$ satisfying \ref{p'1}-\ref{p'3} and 
$${\rm vol}(r_j\mathbb{B}\times \mathbb{R}/\mathbb{Z},\beta_j)<\frac{\epsilon'}{k}.$$
We define $\alpha_j=\Psi_j^*\beta_j$ on $W_j$. By construction, $\alpha_j$'s fit $\alpha$ near the boundary of 
$W_j$ and define a contact form $\hat{\alpha}$ on $V$. Since $\hat{\alpha}$ is isotopic to $\alpha$, 
$\ker \hat{\alpha}$ is diffeomorphic to $\ker \tilde{\alpha}$. We have the estimate
\begin{eqnarray*}
{\rm vol}(V,\hat{\alpha})
&=&{\rm vol}(V\setminus \bigcup_jW_j,\hat{\alpha})+\sum_j {\rm vol}(W_j,\hat{\alpha})\\
&=& {\rm vol}(V\setminus \bigcup_jW_j,\alpha)+\sum_j {\rm vol}(W_j,\beta_j)\\
&\leq &  2\epsilon +\epsilon' a +\epsilon'\\
&=&2\epsilon+\epsilon'(a+1).
\end{eqnarray*}
We note that the open subsets $W_j$ are invariant under the Reeb flow of $\hat{\alpha}$. The closed orbits that are contained in $W_j$'s have period at least 1 due to the construction of contact forms $\{\beta_j\}$s. Due to \ref{F1}, closed orbits of $\hat{\alpha}$, which are contained in $K$ have period at least $1/2$. Taking $\epsilon<1/2$, the remaining closed orbits have period at least $1/2$ due to \ref{F4}. Hence we get 
$$T_{{\rm min}}(\hat{\alpha})\geq 1/2$$
and therefore 
$$\rho(V,\hat{\alpha})=\frac{(T_{{\rm min}}(\hat{\alpha}))^{m+1}}{{\rm vol}(V,\hat{\alpha})}\geq \frac{(1/2)^{m+1}}{2\epsilon+\epsilon'(a+1)}.$$
Now given $C>0$, taking $\epsilon$ and $\epsilon'$ small enough yields 
$$\rho(V,\hat{\alpha})\geq C.$$
We note that once $\epsilon$ is chosen, $a$ is fixed and one may choose $\epsilon'$ for the given $a$. 
\end{proof}


\begin{thebibliography}{9}
\bibitem[ABHS18a]{3sphere}
A. Abbondandolo, B. Bramham, U. L. Hryniewicz and P. A. S. Salom\~ao, \emph{Sharp systolic inequalities for Reeb flows on the three-sphere}, Invent. Math. 211 (2018), 687-778.
 
\bibitem[ABHS18b]{general}
 A. ~Abbondandolo, B. Bramham, U. L. Hryniewicz and P. A. S. Salom\~ao, 
 \emph{Contact forms with large systolic ratio},  Annali della Scuola Normale di Pisa - Classe di Scienze, (to appear).

\bibitem[APBT16]{APBT}
J. C. Alvarez Paiva, F. Balacheff, and K. Tzanev,
\emph{Isosystolic inequalities for optical
hypersurfaces}, 
Adv. Math. 301, 2016.

\bibitem[BK18]{Benedetti}
G. Benedetti and  J. Kang, 
\emph{A local systolic-diastolic inequality in contact and symplectic geometry}, 
Preprint, arXiv:1801.00539.

\bibitem[Cro88]{croke}
 C. B. Croke, 
 \emph{Area and length of the shortest closed geodesic}, J. Differential Geom. 18, 1988.

\bibitem[Gir03]{openbook} E. Giroux, \emph{Géométrie de contact: de la dimension trois vers les dimensions supérieures},
Preprint, arXiv:math/0305129.

\bibitem[Gir17]{ILD} E. Giroux, \emph{Ideal liouville domains - a cool gadget}, 
Preprint, arXiv:1708.08855.


\bibitem[Gro83]{Gromov} 
M. Gromov, 
\emph{Filling Riemannian manifolds},
J. Differential Geom. 18, 1983.


\bibitem[McS95]{Mcduff}
D. McDuff and D. Salamon, 
\emph{Introduction to symplectic topology},
Oxford mathematical monographs, 1995.


\bibitem[Tau07]{Taubes}
C. H. Taubes, 
\emph{The Seiberg-Witten equations and the Weinstein conjecture}, Geom. Topol. 11, 2007.

 \end{thebibliography}
\end{document}